\DeclareMathOperator{\cone}{cone}
\newcommand{\inca}{\hookrightarrow}
\DeclareMathOperator{\amp}{amp}
\DeclareMathOperator{\codepth}{codepth}\DeclareMathOperator{\depth}{depth}
\DeclareMathOperator{\hh}{HH}
\DeclareMathOperator{\h}{H}
\DeclareMathOperator{\bb}{B}
\newcommand{\n}{\mathfrak{n}}
\newcommand{\lb}{\llbracket}
\newcommand{\rb}{\rrbracket}
\newcommand{\CC}{\mathcal{C}}
\newcommand{\VV}{\mathcal{V}}
\newcommand{\EE}{\mathcal{E}}
\newcommand{\Z}{\mathbb{Z}}
\newcommand{\PS}{\mathbb{P}}
\newcommand{\A}{\mathcal{A}}
\newcommand{\T}{\mathsf{T}}
\newcommand{\Poly}{\mathcal{S}}
\newcommand{\D}{\mathsf{D}}
\newcommand{\N}{\mathbb{N}}
\newcommand{\ov}[1]{\overline{#1}}
\newcommand{\vp}{\varphi}
\newcommand{\kos}[2]{{#1}/\!\!/{#2}}
\newcommand{\con}{\subseteq}
\newcommand{\x}{{\bm{x}}}
\newcommand{\g}{{\bm{g}}}
\newcommand{\del}{\partial}
\newcommand{\f}{{\bm{f}}}
\newcommand{\y}{{\bm{y}}}
\newcommand{\0}{{\bf{0}}}
\newcommand{\m}{\mathfrak{m}}
\newcommand{\e}{\epsilon}
\DeclareMathOperator{\Ima}{Im}
\DeclareMathOperator{\Ker}{Ker}
\DeclareMathOperator{\coker}{coker}
\DeclareMathOperator{\pd}{pd}
\DeclareMathOperator{\z}{Z}
\DeclareMathOperator{\cx}{cx}
\DeclareMathOperator{\Spec}{Spec}
\DeclareMathOperator{\Proj}{Proj}
\DeclareMathOperator{\Hom}{Hom}
\DeclareMathOperator{\Ext}{Ext}
\DeclareMathOperator{\V}{\mathsf{V}}
\DeclareMathOperator{\Var}{V}
\newcommand{\gsupp}[1]{\mathsf{Supp}^+_{#1}}
\DeclareMathOperator{\Thick}{\mathsf{Thick}}
\DeclareMathOperator{\Tor}{Tor}
\DeclareMathOperator{\Kos}{Kos}
\newcommand{\shift}{{\mathsf{\Sigma}}}
\DeclareMathOperator{\RHom}{\mathsf{RHom}}
\newcommand{\ot}{\otimes^{\mathsf{L}}}
\newcommand{\xla}{\xleftarrow}
\newcommand{\xra}{\xrightarrow}
\newtheorem{theorem}{Theorem}[subsection]
\newtheorem{proposition}[theorem]{Proposition}
\newtheorem{lemma}[theorem]{Lemma}
\newtheorem{quest}[theorem]{Question}
\theoremstyle{definition}
\newtheorem{example}[theorem]{Example}
\newtheorem{definition}[theorem]{Definition}
\newtheorem{Construction}[theorem]{Construction}
\theoremstyle{remark}
\newtheorem{remark}[theorem]{Remark}
\newtheorem*{Not}{Notation}
\newtheorem{Notation}[theorem]{Notation}
\newtheorem{Discussion}[theorem]{Discussion}
\newtheorem{Problem}[theorem]{Problem}
\newtheorem{chunk}[theorem]{}
\numberwithin{equation}{section}
\theoremstyle{theorem}
\newcounter{intro}
\newtheorem{introthm}[intro]{Theorem}
\theoremstyle{definition}
\begin{document}

\title[Cohomological supports over derived complete intersections and local rings]{Cohomological supports over derived complete intersections and local rings}

\author[Josh Pollitz]{Josh Pollitz}
\address{Department of Mathematics,
University of Utah, Salt Lake City, UT 84112, U.S.A.}
\email{pollitz@math.utah.edu}

\date{\today}

\thanks{The author was partly supported through National Science Foundation  grants DMS 1103176, DMS 1840190, and DMS 2002173.}

\keywords{Local ring, Complete intersection, Derived category, DG algebra, Cohomology operators, Support, Koszul complex}
\subjclass[2010]{13D07, 13D09 (primary), 14M10, 18G15 (secondary)}

\maketitle

\begin{abstract}
A theory of cohomological support  for  pairs of DG modules over a Koszul complex is investigated.  These  specialize to the support varieties of Avramov and Buchweitz defined over a complete intersection ring, as well as support varieties  over an exterior algebra. The main objects of study are certain DG modules over a polynomial ring; these   determine the aforementioned cohomological supports and are shown to encode   (co)homological information about pairs of DG modules over a Koszul complex.   The perspective in this article leads to new proofs  of well-known results
 for pairs of complexes over a complete intersection. Furthermore, these cohomological supports are used to define a support theory for pairs of objects in the derived category of an arbitrary commutative noetherian local ring. Finally,  we calculate several examples; one of which answers a question of D. Jorgensen in the negative. 
\end{abstract}

\section*{Introduction}

In this article we study the cohomological properties of a Koszul complex over a commutative noetherian ring and introduce a corresponding theory of cohomological support. This  simultaneously generalizes  results in  two well-studied  settings:   (1) arbitrary deformations of a commutative noetherian ring, and (2)   exterior algebras defined over a commutative noetherian ring. This theory provides a unified perspective that leads to more natural proofs. Furthermore, the calculations of certain geometric invariants constitute one of the main contributions of this paper. 

 The setup is the following. Let  $Q$ denote a commutative noetherian ring, $\f=f_1,\ldots, f_n$  an \emph{arbitrary} list of elements in $Q$ and let $E$ denote  the Koszul complex on $\f$ over $Q$. We regard $E$ as a DG $Q$-algebra in the usual way, and in doing so  we recover the cases above by secializing to when (1) $\f$ is a $Q$-regular sequence, and (2) each $f_i=0$, respectively. 
 In the introduction we restrict to the case that $Q$ is a regular local ring with residue field $k$, since this is when the strongest results hold.    In this setting we  refer to $E$ as a \emph{derived complete intersection}. See Remark \ref{remark} for a discussion of the terminology.

 For DG $E$-modules  $M$ and $N$ with finitely generated homology, we associate a Zariski closed subset $\VV_E(M,N)$ of $\PS_k^{n-1}$ (see Definition \ref{defsupportintro}), called the cohomological support of the pair $(M,N)$ whose dimension records the polynomial growth rate of  the minimal number of generators of  $\Ext_E^*(M,N)$. The latter value is called the complexity of $(M,N)$, denoted $\cx_E(M,N)$  (cf. \ref{c:cx} for a precise definition). 
In Theorem \ref{MAIN1}, we show these supports satisfy the following. 
\begin{introthm}\label{intro1}
For DG $E$-modules  $M,M',N, N'$ with finitely generated homology,
\[\VV_E(M,N)\cap \VV_E(M',N')=\VV_E(M,N')\cap \VV_E(M',N).\] 	
\end{introthm}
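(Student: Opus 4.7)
The strategy is to derive Theorem~A from a more fundamental ``tensor product formula'' that decomposes $\VV_E(M,N)$ as an intersection of single-module invariants. Concretely, one hopes to establish that
\[
\VV_E(M,N)=\VV_E(M)\cap\VV_E(N)
\]
for some appropriately defined single-module support $\VV_E(X)$ (the most natural candidate being $\VV_E(X,k)$, or the $\Proj$-support of the DG $\Poly$-module attached to $X$ in the next paragraph). Granting any such decomposition, both sides of the identity asserted in Theorem~A collapse to the symmetric intersection $\VV_E(M)\cap\VV_E(M')\cap\VV_E(N)\cap\VV_E(N')$, and the theorem follows at once.

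To prove the decomposition I would use the construction advertised in the abstract: attach to each DG $E$-module $X$ with finitely generated homology a DG module $\F(X)$ over the polynomial ring $\Poly:=k[\chi_1,\ldots,\chi_n]$ of cohomology operators, in such a way that the $\Proj$-support $\Supp_\Poly\F(X)\subseteq\PS^{n-1}_k$ recovers $\VV_E(X)$. The central technical step is to identify $\Ext_E^*(M,N)$, as a graded $\Poly$-module, with a derived Hom computed out of $\F(M)$ and $\F(N)$ on the $\Poly$-side --- for instance $\RHom_\Poly(\F(M),\F(N))$, up to bounded or finite-length ambiguity that does not affect support. Standard support theory for finitely generated modules over the Noetherian ring $\Poly$ then yields
\[
\VV_E(M,N)=\Supp_\Poly\RHom_\Poly(\F(M),\F(N))=\Supp_\Poly\F(M)\cap\Supp_\Poly\F(N),
\]
which is the required decomposition.

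The principal obstacle is precisely this $\Poly$-module identification. One must verify two compatibilities: first, that the cohomology-operator action on $\Ext_E^*(M,N)$ (arising via Avramov--Buchweitz-type constructions over the Koszul complex $E$) coincides with the $\Poly$-action induced by the natural actions on $\F(M)$ and $\F(N)$; second, that the finite generation hypotheses on $\h(M)$ and $\h(N)$ pass to enough noetherianity of $\F(M)$, $\F(N)$ over $\Poly$ for the classical support intersection formula to be applicable. Neither point is individually deep, but each requires careful bookkeeping at the DG level --- in particular, a semifree resolution over $E$ of one of $M,N$ and a check that the construction $\F$ respects quasi-isomorphisms and morphisms in $\D(E)$. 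Once these compatibilities are secured, the intersection identity of Theorem~A is essentially formal.
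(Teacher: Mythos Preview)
Your overall logical strategy is inverted relative to the paper, and the step you flag as the ``principal obstacle'' is genuinely an obstacle that you have not overcome. You propose to first establish the decomposition $\VV_E(M,N)=\VV_E(M)\cap\VV_E(N)$ and then deduce the four-variable identity; the paper does the reverse, proving the four-variable identity directly and extracting the decomposition as a corollary. The direct argument runs as follows: to each pair of Koszul resolutions $F\xra{\simeq}M$, $G\xra{\simeq}N$ one attaches the \emph{two-variable} DG $\Poly$-module $\CC_E(F,G)$ with $\CC_E(F,G)^\natural=\Poly\otimes_Q\Hom_Q(F,G)^\natural$, whose cohomology is $\Ext_E^*(M,N)$. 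Regularity of $Q$ makes each $\CC_E(F,G)$ semiprojective over $\Poly$, so derived tensor products are ordinary ones, and one checks by hand the swap isomorphism
\[
\CC_E(F,G)\otimes_\Poly\CC_E(F',G')\;\cong\;\CC_E(F,G')\otimes_\Poly\CC_E(F',G)
\]
coming from $\Hom_Q(F,G)\otimes_Q\Hom_Q(F',G')\cong\Hom_Q(F,G')\otimes_Q\Hom_Q(F',G)$. Combined with the tensor formula $\gsupp{\Poly}(X\ot_\Poly Y)=\gsupp{\Poly}X\cap\gsupp{\Poly}Y$, this yields the theorem immediately.

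By contrast, your route requires a \emph{one-variable} assignment $X\mapsto\F(X)$ together with an identification of $\Ext_E^*(M,N)$ with $\RHom_\Poly(\F(M),\F(N))$ up to support-irrelevant error. You acknowledge this is the crux but do not supply it, and the paper contains no such construction; the object it builds is inherently bivariant. Producing such an $\F$ amounts to setting up a Koszul/BGG-type duality between $\D^f(E)$ and a suitable subcategory of $\D(\Poly)$ and verifying it intertwines the cohomology-operator actions---a substantial theorem in its own right, not ``careful bookkeeping.'' Moreover, your candidate $\VV_E(X)=\VV_E(X,k)$ is established in the paper only \emph{after} the four-variable identity (it is part of the same theorem), so invoking it here would be circular. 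In short, the reduction you describe is valid, but the hard input you need is precisely what the paper's swap-isomorphism argument is designed to circumvent.
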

A consequence of Theorem \ref{intro1}  is
  the following  bound and symmetry of complexity over derived complete intersections: \emph{For  DG $E$-modules $M$ and $N$ with finitely generated homology,} \[\cx_E(M,N)=\cx_E(N,M)\leq n.\] This recovers the asymptotic theorems of Avramov and Buchweitz  \cite[Theorem II]{SV}, for local complete intersections, and Avramov and Iyengar \cite[5.3]{AI3}, for exterior algebras.
The proof  in \cite{SV} puts to use a theory of intermediate hypersurfaces which reduces the study of Ext-modules over a complete intersection to the study of Ext-modules  over certain hypersurface rings; the latter are well-understood due to the foundational work of Eisenbud in \cite{Eis} (see also \cite{Buch}). The proof in \cite{AI3} uses the Hopf-algebra structure of an exterior algebra.

Theorem \ref{intro1} follows a different route. Namely, we relate  $\VV_E(M,N)$ to  the cohomological support (c.f. Definition \ref{defsup}) of a certain DG module over $\Poly=Q[\chi_1,\ldots,\chi_n]$. This perspective builds on ideas from \cite{CD2} and works with chain level operators corresponding to the Hochschild cohomology of $E$ over $Q$;  these operators are also those introduced by Gulliksen  \cite{G} and  studied in various avatars by Avramov, Eisenbud, Gasharov, Mehta, Peeva, Sun and many others (see, for example,  \cite{VPD,AGP,AS,Eis,Mehta}).  
The regularity of $Q$ is used in a fundamental way to establish  isomorphisms for  the DG $\Poly$-modules which determine the supports under consideration in Theorem \ref{intro1}; the equality of supports is a direct consequence of these isomorphisms.

The perspective adopted  in this paper provides a framework for studying cohomology in  settings where one cannot   resort to the study of intermediate hypersurfaces or exploit a Hopf-algebra structure. For example, \cite{FM} identified a class of color commutative rings that exhibit behavior similar to that of a local complete intersection that do not enjoy enough intermediate hypersurfaces nor, \emph{a priori}, a Hopf-algebra structure.  In a recent collaboration of the present author with Ferraro and Moore \cite{FMP}, we prove an analog of Theorem \ref{intro1}  and arrive at symmetries in complexity for such color commutative rings by following the proof strategy of Theorem \ref{intro1} described above.

Aside from the asymptotic information recorded in these supports, the regularity of the sequence $\f$, and hence, the complete intersection property  is detected by these varieties.  Theorem \ref{cmsv} establishes the following. 
   \begin{introthm}\label{var}
Let $E=\Kos^Q(\f)$ be a derived complete intersection  and  set $R=Q/(\f)$. The following are equivalent:
\begin{enumerate}
\item $R$ is a complete intersection.
\item $\VV_E(R,k)=\emptyset$.
\item  $\VV_E(M,k)=\emptyset$ for some nonzero finitely generated $R$-module $M$.
\end{enumerate}
\end{introthm}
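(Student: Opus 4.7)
The plan is to prove the cycle $(1) \Rightarrow (2) \Rightarrow (3) \Rightarrow (1)$.

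For $(1) \Rightarrow (2)$: when $R$ is a complete intersection, the sequence $\f$ is $Q$-regular, so the augmentation $\varepsilon\colon E \to R$ is a quasi-isomorphism of DG $Q$-algebras. Consequently $R \simeq E$ in $\D(E)$, and hence
\[
\RHom_E(R, k) \simeq \RHom_E(E, k) \simeq k.
\]
In particular, $\Ext_E^*(R, k)$ is concentrated in degree $0$, so $\cx_E(R, k) = 0$. By the correspondence between the complexity and the cohomological support established earlier in the paper, this gives $\VV_E(R, k) = \emptyset$. The implication $(2) \Rightarrow (3)$ is immediate on taking $M = R$.

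The substance of the theorem is $(3) \Rightarrow (1)$. The plan is to break this into two steps: first pass from $\VV_E(M, k) = \emptyset$ to $\VV_E(R, k) = \emptyset$, and then deduce that $\f$ is $Q$-regular. For the first step, Theorem \ref{intro1} applied to the inputs $(M, k)$ and $(R, R)$ yields
\[
\VV_E(M, k) \cap \VV_E(R, R) = \VV_E(M, R) \cap \VV_E(R, k).
\]
The left-hand side is empty by hypothesis, and one expects the containment $\VV_E(R, k) \subseteq \VV_E(R, R)$; combined with further applications of the intersection formula exploiting that $k$ appears as a quotient of $M$ by Nakayama, this should collapse to $\VV_E(R, k) = \emptyset$.

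For the second step, $\VV_E(R, k) = \emptyset$ forces $\Ext_E^*(R, k)$ to be finite-dimensional over $k$. A DG Nakayama argument then shows that $R$ is a perfect DG $E$-module. Since $R = \h_0(E)$, the perfectness of $R$ over $E$ forces the augmentation $E \to R$ to be a quasi-isomorphism, which is equivalent to $\f$ being $Q$-regular, so $R$ is a complete intersection. The main obstacle is making rigorous the containment argument used to pass from $\VV_E(M, k) = \emptyset$ to $\VV_E(R, k) = \emptyset$, namely isolating the right applications of Theorem \ref{intro1}. The regularity of $Q$ will enter decisively in the last step through the identification $E \otimes_Q^{\mathsf{L}} k \simeq \bigwedge^*_k k^n$, which translates the failure of $\f$ to be regular (i.e., $\h_{\geq 1}(E) \neq 0$) into the non-perfectness of $R$ over $E$, and hence into unbounded $\Ext_E^*(R, k)$.
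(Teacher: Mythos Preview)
Your arguments for $(1)\Rightarrow(2)$ and $(2)\Rightarrow(3)$ match the paper's. The problem lies in $(3)\Rightarrow(1)$, specifically your Step~1: the intersection formula from Theorem~\ref{intro1} does not let you pass from $\VV_E(M,k)=\emptyset$ to $\VV_E(R,k)=\emptyset$. Your displayed identity is correct but carries no information here, since both sides are already contained in $\VV_E(M,k)=\emptyset$ and hence vanish without constraining $\VV_E(R,k)$. The Nakayama idea (that $k$ is a quotient of $M$) only produces an exact triangle $M'\to M\to k\to$ in $\D(E)$, whence $\VV_E(k)\subseteq \VV_E(M)\cup\VV_E(M')$; once $\VV_E(M)=\emptyset$ this just says $\VV_E(M')=\PS_k^{n-1}$, and iterating gets you nowhere. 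There is no \emph{a priori} containment $\VV_E(R)\subseteq\VV_E(M)$ available---establishing it for all nonzero finitely generated $M$ is tantamount to the implication you are trying to prove.

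The paper bypasses this reduction entirely and never passes through $M=R$. From $\VV_E(M,k)=\emptyset$ one obtains $\Ext_E^{\gg 0}(M,k)=0$, and a result of J{\o}rgensen then gives $M\in\Thick_{\D(E)}E$. The decisive input is a second result of J{\o}rgensen, an amplitude inequality: any nonzero object $X$ of $\Thick_{\D(E)}E$ satisfies $\amp E\leq \amp X$. Since $M$ is a module, $\amp M=0$, hence $\amp E=0$, which forces $E\xrightarrow{\simeq}R$ and thus $\f$ is $Q$-regular. Your Step~2 is essentially this argument specialized to $R$; the repair is to apply it to $M$ directly, replacing ``$R=\h_0(E)$ forces the augmentation to be a quasi-isomorphism'' with the amplitude inequality applied to $M$. (Incidentally, the identification $E\otimes_Q^{\mathsf L}k\simeq\bigwedge_k k^n$ holds for \emph{any} sequence $\f$, since $E$ is $Q$-free, so it cannot by itself detect regularity of $\f$.)
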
  
The equivalence of (1) and (2) in Theorem \ref{var} was first proven in \cite{Pol} but an independent proof of the  more general   Theorem \ref{cmsv} is given in this article. 
Theorem \ref{var} has been applied in \cite{Pol}, and more recently in \cite{BEJ}, to establish a triangulated category characterization of locally complete intersection rings, answering a question of Dwyer, Greenlees, and Iyengar \cite{DGI}. One of the major points is that the structure of thick subcategories is also reflected in these varieties (cf. Remark \ref{remarkdgi}).

Finally, we use this theory of supports over a derived complete intersection to  define a support theory for pairs of complexes over an arbitrary  local ring $R$; we denote these by $\VV_R(M,N)$ for complexes of $R$-modules $M$ and $N$, see Definition \ref{localdef} for details.
 These recover the supports from \cite{SV} for pairs of finitely generated modules over a local complete intersection, and  in Theorem \ref{unstable} it is shown that these also specialize to the more general support varieties in \cite{Jor}. Working at the chain level with the DG $\Poly$-modules described above allows  us to answer a question from  \cite{Jor} in the negative (see Example \ref{counter}): \emph{If $\VV_R(M,N)=\emptyset$ for some finitely generated $R$-modules $M$ and $N$, is $R$ a complete intersection?}  Moreover, Theorem \ref{var}  can be interpreted as a corrected form of the question  from \cite{Jor}. 

 It is worth noting that Theorem \ref{var} says the supports $\VV_R(M,k)$ for a finitely generated $R$-module $M$, especially $\VV_R(R,k)$, are geometric  obstructions to $R$ itself being a  complete intersection. In general,  embedded deformations of $R$ cut down $\VV_R(R,k)$ by a hyperplane (see Proposition \ref{p:ed}); a complete intersection being a ring with a maximal number of embedded deformations and hence, an empty support (cf. Theorem \ref{var}). 
However,  the general structure of $\VV_R(R,k)$ remains mysterious when $R$ is not a complete intersection. The final  result we highlight  makes progress on gaining  insight on $\VV_R(R,k)$  by characterizing the possible closed subsets  it  can be when $R$
has  small codepth (see Section \ref{cod}). 
\begin{introthm}\label{clsv1}
 If $R$ is not a complete intersection and $\codepth R\leq 3$, then $\VV_R(R,k)=\mathbb{P}_k^{n-1}$ except when $R$ admits an embedded deformation. In the exceptional case,  $\VV_R(R,k)$ is a hyperplane in $\mathbb{P}_k^{n-1}$.
\end{introthm}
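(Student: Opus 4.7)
The plan is to reduce to the complete case and analyze $\VV_R(R,k)$ using the classification of commutative local rings of small codepth. One may assume $R$ is $\n$-adically complete and fix a minimal Cohen presentation $R = Q/I$ with $(Q,\n,k)$ regular local and $I \subseteq \n^2$. Let $\f = f_1,\ldots,f_n$ be a minimal generating set of $I$ and set $E = \Kos^Q(\f)$; by Definition \ref{localdef}, $\VV_R(R,k)$ is identified with $\VV_E(R,k) \subseteq \mathbb{P}_k^{n-1}$, the support of the graded $k[\chi_1,\ldots,\chi_n]$-module $\Ext_E^*(R,k)$.

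One containment is delivered by Proposition \ref{p:ed}: each embedded deformation of $R$ cuts out a hyperplane in $\mathbb{P}_k^{n-1}$ that contains $\VV_R(R,k)$. I would then argue that under the hypothesis $\codepth R \leq 3$ with $R$ not a complete intersection, $R$ admits at most one embedded deformation. Indeed, two independent embedded deformations would realize $R \cong R''/(g_1,g_2)$ with $g_1,g_2$ a regular sequence on a local ring $R''$ of codepth $\leq 1$; such an $R''$ is regular or a hypersurface, so $R$ itself would be a complete intersection, contradicting the hypothesis. Combining these two observations, $\VV_R(R,k)$ is contained in $\mathbb{P}_k^{n-1}$ if $R$ admits no embedded deformation, and contained in a hyperplane if it admits exactly one.

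For the reverse containment, I would analyze $\Ext_E^*(R,k)$ case by case using the classification of Tor algebras $\Tor^Q_*(R,k)$. In codepth $2$ non-CI, the Hilbert-Burch theorem forces $R$ to be Cohen-Macaulay and Golod, and by the previous paragraph $R$ has no embedded deformation; an explicit computation with the Golod minimal free resolution of $k$, realized through the Tate-Gulliksen construction, shows that $\Ext_E^*(R,k)$ has support equal to $\mathbb{P}_k^{n-1}$. In codepth $3$ non-CI, one invokes the Avramov-Kustin-Miller classification of $\Tor^Q_*(R,k)$ into the classes $\mathbf{B}$, $\mathbf{C}(c)$, $\mathbf{G}(r)$, $\mathbf{H}(p,q)$ and their subcases; for each class the explicit presentation of $\Tor^Q_*(R,k)$ as a graded-commutative DG algebra determines which classes admit an embedded deformation and allows one to read off, from the Tate-Gulliksen model, the action of $\chi_1,\ldots,\chi_n$ on $\Ext_E^*(R,k)$ and the resulting support.

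The principal obstacle is this codepth $3$ case analysis, particularly for the classes $\mathbf{G}(r)$ and $\mathbf{H}(p,q)$ whose Tor algebras carry nontrivial multiplicative and Massey-product structure. For each such class one must exhibit an explicit family of nonzero Ext classes parameterized by the predicted support, which is a concrete matter of reading off the correct graded components of the Tate-Gulliksen model; the theorem then assembles from these case-by-case computations together with the uniform upper bound supplied by Proposition \ref{p:ed}.
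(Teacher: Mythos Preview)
Your overall strategy matches the paper's: pass to the completion, invoke the Avramov--Kustin--Miller classification in codepth~$3$, and proceed case by case. The execution differs. The paper does not separate an upper bound via Proposition~\ref{p:ed} from a lower bound; it computes $\Ext_E^*(\widehat R,k)$ explicitly in each AKM class (Theorem~\ref{ethm}) and reads the support off directly. The engine is the complex $\CC_E(F,k)$ of Remark~\ref{complex}, where $F$ is the minimal $Q$-free resolution of $\widehat R$ equipped with its Buchsbaum--Eisenbud DG algebra structure: since $\pd_Q\widehat R\le 3$, this is a length-$\le 3$ complex of free $k[\chi_1,\dots,\chi_n]$-modules whose differentials $\sum_i\chi_i\otimes\Hom(\lambda_i,k)$ are determined entrywise by the AKM multiplication tables on $F\otimes_Q k$ (see \ref{strcodepth3}). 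The support computation is then a short linear-algebra exercise in each class.

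Your proposal to work through a Golod or Tate--Gulliksen resolution of $k$ over $R$ is aimed at the wrong object: $\Ext_E^*(R,k)$ is computed from a resolution of $R$ over $E$ (equivalently, a $Q$-free resolution of $R$), not from a resolution of $k$ over $R$. Even reinterpreted as an infinite model over $E$, extracting the $\chi_i$-action there is far less direct than from the three-term complex the paper uses. Two smaller points: the codepth-$3$ classes are $\mathbf{CI}$, $\mathbf{TE}$, $\mathbf{B}$, $\mathbf{G}(r)$, $\mathbf{H}(p,q)$ (there is no $\mathbf{C}(c)$, and you omitted $\mathbf{TE}$); and the paper identifies the embedded-deformation case by citing the external fact that, among non-CI codepth-$3$ rings, exactly class $\mathbf{H}(n-1,\ell)$ admits one, rather than via your two-independent-deformations counting argument.
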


Determining the possible closed subsets that can be realized by  $\VV_R(R,k)$  remains an interesting problem for arbitrary codepth. In hopes of better understanding this problem, we propose the following question: \emph{Is $\VV_R(R,k)$ always a union of linear spaces?}
The examples in this article  and  \cite[Section 4]{BEJ}, as well as various calculations using Macaulay2 \cite{M2}, have yet to produce an example that is not a union of linear spaces. 

\section*{Acknowledgements}
This paper was partially completed at the University of Nebraska-Lincoln while I was finishing my thesis  under the supervision of Luchezar Avramov and Mark Walker. It is a pleasure to thank both of them for   innumerable conversations relating to this work. I would also like to thank Srikanth Iyengar and the referee for helpful comments on  this article.

\section{Differential Graded Homological Algebra} \label{secprelim}

\subsection{Conventions and Notation}

Fix a commutative noetherian ring $Q$. Let $A=\{A_i\}_{i\in \Z}$ denote a commutative DG $Q$-algebra. By \emph{commutative}, we mean that $A$ is commutative in the graded sense; namely,  $$ab=(-1)^{|a||b|}ba$$ for all $a$ and $b$ in $A$. 
   
 \begin{chunk}A map $\vp: M\to N$ between DG $A$-modules $M$ and $N$ is called a \emph{morphism of DG $A$-modules}  provided that  $\vp$ is a morphism of the underlying complexes of $Q$-modules such that   $\vp(am)=a\vp(m)$ for all $a\in A$ and $m\in M$.  We  use the notation  $\vp: M\xra{\simeq} N$ to mean that the morphism of DG $A$-modules  $\vp$ is a quasi-isomorphism.  
\end{chunk}

\begin{chunk} Let $M$ be a DG $A$-module. The differential of  $M$ is denoted by  $\del^M$. We use $|m|$ to denote the degree of an element of $M$, i.e., $|m|=d$ exactly when $m\in M_d$. 
For each $i\in \Z$,  $\shift^i M$ is the DG $A$-module given by
 $$(\shift^i M)_{n}:=M_{n-i}, \  \del^{\shift^i M}:=(-1)^i\del^{M}, \ 
\text{ and } a\cdot m:=(-1)^{|a|i}am.$$  
The \emph{boundaries} and \emph{cycles of $M$} are  \[\bb(M):=\{\Ima \del^M_{i+1}\}_{i\in \Z} \ \text{ and }  \ \z(M):=\{ \Ker\del^M_i\}_{i\in \Z}, \]respectively. 
The \emph{homology of $M$} is defined to be  $$\h(M):=\z(M)/\bb(M)=\{\h_i(M)\}_{i\in \Z}$$ which is a graded module over the graded $Q$-algebra $\h(A):=\{\h_i(A)\}_{i\in \Z}.$  We let $M^\natural$ denote the underlying graded $Q$-module obtained by forgetting the differential of $M$; note that  $A^\natural$ is a graded $Q$-algebra and $M^\natural$ is a graded $A^\natural$-module. 
\end{chunk}

\begin{chunk}\label{c:grade}
As Ext-modules have historically been graded cohomologically, we will  be consistent with this  convention. In particular, when working with DG modules whose homology is a graded Ext-module of interest we will  grade these objects  cohomologically. An effort has been made to make it clear when working in this setting. All other graded objects will be  graded homologically as indicated above. We point out that when $M=\{M^i\}_{i\in \Z}$ is cohomologically graded DG $A$-module,
$\shift^j M$ has $(\shift^jM)^i=M^{i+j}$.  \end{chunk}

\subsection{Semifree and Semiprojective DG modules}

Besides setting terminology, the  goal of this section is to  establish the ``moreover" statement from  Proposition \ref{gpsp} below in the generality stated there; this wil  be needed for proving some of the main results of the article (for example, Theorem \ref{MAIN1}).   This section may be skipped by the experts.

\begin{chunk}\label{semiproj}
A DG $A$-module $P$ is  \emph{semiprojective} if for every morphism of DG $A$-modules $\alpha: P\to N$ and each surjective quasi-isomorphism of DG $A$-modules $\gamma: M\to N$ there exists a unique up to homotopy morphism of DG $A$-module $\beta: P\to M$ such that $\alpha=\gamma\beta$. Equivalently, $P^\natural$ is a projective graded $A^\natural$-module and $\Hom_A(P,-)$ preserves quasi-isomorphisms. When $P$ is semiprojective the functor $P\otimes_A-$ is also exact and  preserves quasi-isomorphisms.  \end{chunk}
\begin{chunk}\label{sres}
  A \emph{semiprojective resolution} of a DG $A$-module  $M$  is a surjective quasi-isomorphism of DG $A$-modules $\e: P\to M$ where  $P$ is a semiprojective DG $A$-module.   Semiprojective resolutions exist and any two semiprojective resolutions of $M$ are unique up to  homotopy equivalence \cite[6.6]{FHT}.  \end{chunk}

\begin{chunk}\label{2of3}
Assume that  $$0\to L\xra{\alpha} M \xra{\beta} N\to 0$$ is an exact sequence of DG $A$-modules   such that $$0\to L^\natural\xra{\alpha^\natural} M^\natural \xra{\beta^\natural} N^\natural\to 0$$ is a split exact sequence of graded $A^\natural$-modules. By the Five Lemma, it follows  that if  two of the three DG $A$-modules are semiprojective, so is the third. 
\end{chunk}

  \begin{chunk}\label{semifree}
  A DG $A$-module $F$ is \emph{semifree} if there exists a chain of DG $A$-submodules of $F$  $$0=F(-1)\con F(0)\con F(1)\con \ldots$$  such that $\bigcup_i F(i)=F$ and  for each $i$ $$F(i+1)/F(i)\cong \coprod_{x\in X_i} \shift^{|x|} A$$ where $X_i$ is some graded set. Every semifree DG $A$-module is semiprojective (see \cite[6.10]{FHT}). 
  \end{chunk}

The following construction  is the standard one used to  construct a semiprojective (in fact, semifree) resolution of a DG A-module; see, for example,   \cite[2.2.6]{IFR}. 
\begin{Construction}\label{formal}
Let  $M$ be a  DG $A$-module. For each $m\in M$ we let $e_m$ be a graded variable of degree $|m|$. We define $C(m)$ to be the DG $A$-module with $$C(m)^\natural :=Ae_m^\natural\oplus Ae_{\del^M(m)}^\natural$$ where $$\del(ae_m+be_{\del^{M}(m)})=\del^A(a)m+(-1)^{|a|}ae_{\del^{M}(m)}+\del^A(b)e_{\del^M(m)}.$$ It is straightforward to see that  $C(m)$ is a contractible   semifree DG $A$-module. Moreover, we have a morphism of DG $A$-modules $\pi(m): C(m)\to M$  given by $$ae_m+be_{\del^{M}(m)}\mapsto am+b\del^M(m).$$ 

For any cycle $z\in \z(M)$, we have a morphism of DG $A$-modules $\tau(z): Ae_z\to M$ given by $ae_z\mapsto az.$ Define $$C^M:=\left(\coprod_{m\in M} C(m)\right)\coprod\left(\coprod_{z\in \z(M)} Ae_z\right)$$ and define   $$\pi^M:= \left(\sum_{m\in M} \pi(m)\right)+\left(\sum_{z\in \z(M)} \tau(z)\right): C(M)\to M.$$ Finally,  set $K^M:=\Ker \pi^M$ and so we have a short exact sequence \begin{equation}
\label{exact1} 0\to K^M\xra{\iota^M} C^M\xra{\pi^M}M\to 0
\end{equation}
\end{Construction}

\begin{lemma}
\label{formal1}Using the notation from Construction \ref{formal},  the following hold: 
\begin{enumerate}
\item $C^M$ is a semifree DG $A$-module. 
\item  There exists a surjective homotopy equivalence $C^M\to \coprod_{z\in \z(M)} \shift^{|z|} A$.
\item Applying $\bb(-),$ $\h(-)$ or $\z(-)$ to (\ref{exact1}) yield  exact sequences of graded $\z(A)$-modules. 
\end{enumerate}
In particular, if $\del^A=0$, then  $\z(C^M)$ and $\bb(C^M)$    are free DG $A$-modules.  
\end{lemma}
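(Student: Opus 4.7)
The plan is to unpack each claim directly from the explicit description of $C^M$ in Construction \ref{formal}.

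For (1) and (2), each $C(m)$ is semifree via the two-step filtration $0 \subset Ae_{\del^M m} \subset C(m)$, whose subquotients are shifted copies of $A$ (note that $Ae_{\del^M m}$ is stable under $\del$ since $\del(be_{\del^M m}) = \del^A(b) e_{\del^M m}$). A coproduct of semifree DG $A$-modules is semifree, and each $Ae_z$ is trivially semifree, so $C^M$ is semifree. Each $C(m)$ is moreover the mapping cone of the identity on $\shift^{|\del^M m|} A$ and hence contractible (as already asserted in the construction). Thus $\coprod_m C(m)$ is contractible, and since $C^M$ decomposes as $\coprod_m C(m) \oplus \coprod_z Ae_z$, the projection onto $\coprod_z Ae_z \cong \coprod_z \shift^{|z|}A$ is the desired surjective homotopy equivalence.

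For (3), the cleanest route is to exhibit (\ref{exact1}) as a split short exact sequence in the category of complexes of $Q$-modules — not merely of graded $Q$-modules. Define $s\colon M \to C^M$ on homogeneous elements by $m \mapsto e_m$. The key calculation is that the differential formula gives $\del^{C^M}(e_m) = e_{\del^M m} = s(\del^M m)$, so $s$ commutes with the differential. Consequently (\ref{exact1}) splits by a chain map of $Q$-complexes (even though $s$ fails to be $A$-linear), and the functors $\bb$, $\z$, $\h$ therefore preserve exactness. The resulting short exact sequences are automatically $\z(A)$-linear since $\iota^M$ and $\pi^M$ are $A$-linear.

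For the \emph{in particular} statement, when $\del^A = 0$ the differential on $C(m)$ simplifies to $\del(ae_m + be_{\del^M m}) = (-1)^{|a|} a e_{\del^M m}$ and the differential on $Ae_z$ vanishes. Direct inspection then gives $\z(C(m)) = \bb(C(m)) = Ae_{\del^M m}$ and $\z(Ae_z) = Ae_z$ with $\bb(Ae_z) = 0$, so $\z(C^M)$ and $\bb(C^M)$ are visibly free $A$-modules. The only point requiring mild care is the chain-map character of the $Q$-linear section $s$ in (3); everything else reduces to direct computation from Construction \ref{formal}.
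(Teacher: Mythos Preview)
Your treatment of (1), (2), and the ``in particular'' statement is correct and matches the paper.

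Your argument for (3), however, contains a genuine error. The proposed section $s\colon M \to C^M$, $m \mapsto e_m$, is not $Q$-linear --- it is not even additive. If $m_1, m_2 \in M$ are homogeneous of the same degree, then $s(m_1+m_2) = e_{m_1+m_2}$ lives in the summand $C(m_1+m_2)$ of the coproduct defining $C^M$, whereas $s(m_1) + s(m_2) = e_{m_1} + e_{m_2}$ lives in $C(m_1) \oplus C(m_2)$; these are distinct elements. Even setting additivity aside, the chain-map identity you claim fails: $\del^{C^M}(e_m)$ is the \emph{lower} generator $e_{\del^M m}$ of the summand $C(m)$, while $s(\del^M m)$ is the \emph{upper} generator $e_{\del^M m}$ of the \emph{different} summand $C(\del^M m)$. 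The coincidence of labels does not make these equal in the coproduct. So there is no chain-level $Q$-splitting of (\ref{exact1}) by this route, and your deduction that $\bb$, $\z$, $\h$ preserve exactness collapses.

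The paper's proof of (3) proceeds instead by a direct diagram chase. One observes that both $\pi^M$ and $\z(\pi^M)$ are surjective --- the latter precisely because of the summands $Ae_z$ for $z \in \z(M)$, which supply a cycle $e_z$ over each $z$. Since $\z(-)$ is left exact, the $\z$-row is then exact. Placing the exact columns $0 \to \z(-) \to (-) \to \shift\bb(-) \to 0$ over the three terms of (\ref{exact1}) and invoking the nine lemma yields exactness of the $\bb$-row; a second application with the columns $0 \to \bb(-) \to \z(-) \to \h(-) \to 0$ gives the $\h$-row.
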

\begin{proof}
 It is clear that $C^M$ is a semifree DG $A$-module and that $\pi^M$  and $\z(\pi^M)$ are surjective. Consider the following commutative diagram of DG $\z(A)$-modules 
 \begin{center}
 \begin{tikzcd}
   & 0 \arrow{d} & 0\arrow{d} & 0\arrow{d} & \\ 
   0 \arrow{r}& \z(K^M) \arrow{d} \arrow["\z(\iota^M)"]{r}& \z(C^M) \arrow{d}  \arrow["\z(\pi^M)"]{r}& \z(M)\arrow{r}  \arrow{d} & 0 \\
      0\arrow{r} & K^M\arrow{d} \arrow["\iota^M"]{r}& C^M\arrow{d} \arrow["\pi^M"]{r}& M\arrow{r} \arrow{d}& 0 \\
      0 \arrow{r}& \shift\bb(K^M) \arrow["\shift\bb(\iota^M)"]{r} \arrow{d}  & \shift\bb(C^M)\arrow["\shift\bb(\pi^M)"]{r}  \arrow{d} & \shift\bb(M) \arrow{r}  \arrow{d} & 0 \\
         & 0 & 0 & 0 &  
 \end{tikzcd}
 \end{center}
Since $\z(-)$ is left exact, it follows that the top two rows of  the diagram are exact. Also, the columns are the canonical exact sequences. Hence, by the \emph{nine lemma} it follows that the last row is exact. A similar argument now yields that applying $\h(-)$ to (\ref{exact1}) yields an exact sequence.

 Since $C(m)$ is contractible for each $m\in M$  and  $Ae_z\cong\shift^{|z|}A$ for each $z\in \z(M)$, we conclude that there is   a surjective homotopy equivalence $C^M\to \coprod_{z\in \z(M)} \shift^{|z|} A$. 

Now assume that $\del^A=0$. For each $m\in M$, $$\bb(C(m))=\z(C(m))=Ae_{\del^M(m)}.$$ Thus, $$\z(C^M)=\left(\coprod_{m\in M} Ae_{\del^M{m}} \right)\coprod\left(\coprod_{z\in \z(M)} Ae_z\right)$$ and $$\bb(C^M)=\coprod_{m\in M} Ae_{\del^M{m}}.$$  Thus, $\z(C^M)$ and $\bb(C^M)$ are free DG $A$-modules. \end{proof}

\begin{lemma}\label{dgres}
Let $M$ be a DG $A$-module. There exists an exact sequence of DG $A$-modules $$\cdots \to F^n\to F^{n-1}\to \ldots \to F^1\to F^0\to M\to 0$$ such that each $F^i$ is a semifree free DG $A$-module that maps  onto $\coprod_{z\in \z(K^i)} \shift^{|z|}A$ via a surjective homotopy equivalence  where $K^i=\coker(F^{i+1}\to F^i).$ Moreover,  the induced sequences of graded  $\z(A)$-modules are exact:
\begin{enumerate}
\item$  \cdots \to \bb(F^1)\to \bb(F^0)\to \bb(M)\to 0$
\item$ \cdots \to \z(F^1)\to \z(F^0)\to \z(M)\to 0$
\item$  \cdots \to \h(F^1)\to \h(F^0)\to \h(M)\to 0$
\end{enumerate}
Furthermore, if $\del^A=0$, $F^i$ can be chosen such that $\z(F^i)$ and  $\bb(F^i)$ are free DG $A$-modules.  
\end{lemma}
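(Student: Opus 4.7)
The plan is to iterate Construction \ref{formal}. Put $K^0 := M$ and $F^0 := C^M$ with surjection $\pi^M \colon F^0 \to M$. Inductively, given $F^0, \ldots, F^i$ with $K^{i+1} := \Ker(F^i \to F^{i-1})$ (and the convention $F^{-1} := M$), set $F^{i+1} := C^{K^{i+1}}$ and let the differential $F^{i+1} \to F^i$ be the composition $C^{K^{i+1}} \xrightarrow{\pi^{K^{i+1}}} K^{i+1} \hookrightarrow F^i$. By construction the resulting complex $\cdots \to F^1 \to F^0 \to M \to 0$ is exact, and $\coker(F^{i+1} \to F^i) = F^i / K^{i+1} = K^i$, so the notation in the statement is consistent.

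The direct properties of each $F^i$ are then immediate from Lemma \ref{formal1}: part (1) gives that each $F^i = C^{K^i}$ is semifree, and part (2) supplies the required surjective homotopy equivalence $F^i \twoheadrightarrow \coprod_{z \in \z(K^i)} \shift^{|z|}A$. Moreover, when $\del^A = 0$ the final assertion of Lemma \ref{formal1}, applied at each stage of the recursion, shows that $\z(F^i)$ and $\bb(F^i)$ are free DG $A$-modules with no additional choices required.

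The main task is to establish exactness of the induced sequences (1)--(3) on $\bb$, $\z$, and $\h$. The strategy is to invoke Lemma \ref{formal1}(3) on each short exact sequence $0 \to K^{i+1} \to F^i \to K^i \to 0$ arising in the induction: it produces short exact sequences of graded $\z(A)$-modules after applying $\bb(-)$, $\z(-)$, or $\h(-)$. Splicing these short exact sequences along the common terms on $K^{i+1}$ assembles the three long exact sequences terminating in $\bb(M)$, $\z(M)$, and $\h(M)$, respectively. The potentially delicate point is the homology sequence: a generic short exact sequence of DG modules produces only a long exact triangle on $\h$, not a short exact sequence. The key is that Construction \ref{formal} adjoins enough cycles and elements $e_{\del^M(m)}$ to force the connecting maps to vanish, which is precisely what Lemma \ref{formal1}(3) encodes and what makes the splicing work.
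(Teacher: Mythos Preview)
Your proof is correct and follows exactly the same approach as the paper: iterate Lemma \ref{formal1}, splicing the resulting short exact sequences $0 \to K^{i+1} \to F^i \to K^i \to 0$ into the desired long exact sequence. The paper's own proof is a single sentence to this effect; your version simply unpacks the indexing and the splicing argument more explicitly.
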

\begin{proof}
Iteratively applying Lemma \ref{formal1} to obtain exact sequences $$0 \to K^i\to F^i\to M^i\to 0$$ where $M^0:=M$ and $M^i:=K^{i-1}$ gives us the desired result. 
\end{proof}

\begin{proposition}\label{gpsp}
Suppose that $\del^A=0$. Let $M$ be a DG $A$-module where two of  $\bb(M)$, $\z(M)$, $\h(M)$ have finite projective dimension when regarded as  graded $A$-modules. There exists an exact sequence of DG $A$-modules \begin{equation}0 \to F^n\to F^{n-1}\to \ldots \to F^1\to F^0\to M\to 0\label{resdg}\end{equation} where $F^n$ is semiprojective and $F^i$ is semifree for $0\leq i \leq n-1$.  Moreover, if $M$ is graded projective, then $M$ is semiprojective. 
\end{proposition}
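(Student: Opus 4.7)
The plan is to iteratively apply Lemma \ref{dgres} to build a resolution of $M$ by semifree DG $A$-modules and then truncate at an index controlled by the projective dimensions of $\bb(M), \z(M), \h(M)$. Lemma \ref{dgres} supplies an exact sequence $\cdots\to F^1\to F^0\to M\to 0$ of semifree DG $A$-modules such that $\bb(F^i)$ and $\z(F^i)$ are free graded $A$-modules (using $\del^A=0$), and such that applying $\bb(-)$, $\z(-)$, or $\h(-)$ produces exact sequences. The surjective homotopy equivalence of Lemma \ref{formal1}(2) also gives $\h(F^i)\cong\coprod_{z}\shift^{|z|}A$, so $\h(F^i)$ is a free graded $A$-module as well.

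From the short exact sequence $0\to\bb(M)\to\z(M)\to\h(M)\to 0$ of graded $A$-modules, the hypothesis that two of $\bb(M),\z(M),\h(M)$ have finite projective dimension forces the third to, so all three are bounded by some common $n$. Setting $F^n:=\ker(F^{n-1}\to F^{n-2})$ yields the required exact sequence (\ref{resdg}) with $F^0,\dots,F^{n-1}$ semifree. Applying $\bb(-)$, $\z(-)$, $\h(-)$ to this truncation produces free resolutions of $\bb(M), \z(M), \h(M)$ of length $n$, so the projective dimension bound forces $\bb(F^n), \z(F^n), \h(F^n)$ to all be graded projective.

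To verify that $F^n$ is semiprojective, I would use that since $\del^A=0$ the submodules $\bb(F^n)$ and $\z(F^n)$ of $F^n$ are DG submodules carrying the zero differential, and the graded projectivity of $\h(F^n)$ and $\shift\bb(F^n)$ splits the two short exact sequences
$$0\to\bb(F^n)\to\z(F^n)\to\h(F^n)\to 0\quad\text{and}\quad 0\to\z(F^n)\to F^n\to\shift\bb(F^n)\to 0$$
as graded $A$-modules. Choosing sections $\sigma\colon\h(F^n)\to\z(F^n)$ and $\rho\colon\shift\bb(F^n)\to F^n$ yields an internal decomposition $F^n=\sigma(\h(F^n))\oplus\bigl(\bb(F^n)\oplus\rho(\shift\bb(F^n))\bigr)$, which one checks is in fact a DG decomposition $F^n\cong\h(F^n)\oplus T$, where $\h(F^n)$ carries the zero differential and $T$ is the contractible DG $A$-module with underlying graded module $\bb(F^n)\oplus\shift\bb(F^n)$. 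Both summands are direct summands of semifree DG $A$-modules: the first of the trivially semifree $\coprod\shift^{|h|}A$ with zero differential, the second of a direct sum of contractible cones $C(b)$ from Construction \ref{formal}. Since any direct summand of a semifree DG $A$-module is semiprojective (via the characterization in \ref{semiproj}), $F^n$ is semiprojective.

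For the moreover, if $M^\natural$ is graded projective, I would embed $M$ as a graded direct summand $M\oplus M'=F$ of a free graded $A$-module and equip $F$ with the differential $\del^M\oplus 0$; using $\del^A=0$, $F$ is a DG $A$-module with $M$ as a DG direct summand, and $F$ admits a semifree filtration built from a well-ordering of a free basis adapted to $\del^M$, so $M$ is semiprojective. The main obstacle will be the DG-level decomposition in the previous paragraph: while the splittings of the two short exact sequences exist at the graded level immediately from the projectivity hypotheses, upgrading them to a DG direct sum decomposition crucially uses $\del^A=0$, which forces $\bb(F^n)$ and $\z(F^n)$ to be DG submodules of $F^n$ with zero differential.
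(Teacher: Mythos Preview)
The main body of your argument is correct and parallels the paper's, though you take a more explicit route at the key step. Where the paper observes that $\z(K^n)$ and $\bb(K^n)$, being graded projective with zero differential, satisfy $\h(\Hom_A(-,\cdot))\cong\Hom_A(-,\h(\cdot))$ and are therefore semiprojective, and then invokes the two-out-of-three property \ref{2of3} on the graded-split sequence $0\to\z(K^n)\to K^n\to\shift\bb(K^n)\to 0$, you instead produce an explicit DG decomposition $K^n\cong\h(K^n)\oplus T$ with $T$ contractible and exhibit each summand as a DG retract of a semifree module. Both work; the paper's version is shorter and avoids the verification that the two graded splittings assemble into a DG decomposition.

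Your argument for the ``moreover'', however, has a genuine gap. The assertion that the free module $F=M\oplus M'$ equipped with $\del^M\oplus 0$ ``admits a semifree filtration built from a well-ordering of a free basis adapted to $\del^M$'' is unjustified, and your construction nowhere uses the standing hypothesis that two of $\bb(M),\z(M),\h(M)$ have finite projective dimension. Without that hypothesis the claim is false: take $A=k[x]/(x^2)$ with $\del^A=0$ and let $M$ be the doubly infinite complex $\cdots\xrightarrow{x}A\xrightarrow{x}A\xrightarrow{x}\cdots$. Then $M^\natural$ is free, yet $M$ is acyclic and not contractible (a contracting homotopy would force $hx+xh=1$, impossible), hence not semiprojective; no well-ordering of a basis makes the differential strictly triangular because the chain of basis elements has no least element. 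Of course here $\bb(M)\cong k$ has infinite projective dimension, so the proposition does not apply---but that is exactly the point: a valid argument must invoke that hypothesis. The paper instead deduces the ``moreover'' from the first part by induction on the length of the resolution~(\ref{resdg}): if $M$ is graded projective then $L:=\ker(F^0\to M)$ is a graded summand of $F^0$, hence graded projective, and sits in a resolution of the same shape but of length $n-1$; by induction $L$ is semiprojective, and then \ref{2of3} applied to the graded-split sequence $0\to L\to F^0\to M\to 0$ yields that $M$ is semiprojective.
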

\begin{proof}

Let  $$\cdots \to F^n\to F^{n-1}\to \ldots \to F^1\to F^0\to M\to 0$$ be the sequence obtained from Lemma \ref{dgres}. Set  $K^n:=\ker (F^n\to F^{n-1})$. 
The assumption 
 and the exact sequences of DG $A$-modules  $$0\to \bb(M)\to \z(M)\to \h(M)\to 0$$ imply that $\bb(M)$ and $\z(M)$ have finite projective dimension when regarded as graded $A$-modules. Since (1) and (2) are graded free resolutions of $\bb(M)$ and $\z(M)$ over $A$, respectively, it follows that    $\bb(K^n)$  and $\z(K^n)$ are graded projective DG $A$-modules for  $n\gg 0$. As $\del^A=0$, $\del^{\z(K^n)}=0$, and $\del^{\bb(K^n)}=0$ it follows that $$\h(\Hom_A(\z(K^n),-))\cong \Hom_A(\z(K^n), \h(-))$$ $$\h(\Hom_A(\bb(K^n),-))\cong \Hom_A(\bb(K^n), \h(-)).$$ Thus, $\z(K^n)$  and $\bb(K^n)$ are semiprojective DG $A$-modules. Finally, $\ref{2of3}$ and the graded split exact sequence $$0\to \z(K^n)\to K^n\to \shift \bb(K^n)\to 0$$ yield that $K^n$ is semiprojective. Induction on the length  of the exact sequence (\ref{resdg})  and again applying $\ref{2of3}$ establishes that if $M$ is graded projective then $M$ is semiprojective.  
\end{proof}

\subsection{The Derived Category of a DG Algebra}\label{thick}
\begin{chunk}
Let $\D({A})$ denote the derived category of ${A}$; recall that $\D(A)$, equipped with $\shift$, is a triangulated category. Define  $\D^f({A})$ to be the full subcategory of $\D({A})$ consisting of all $M$ such that $\h(M)$ is a finitely generated graded module over  $\h({A}).$ We use $\simeq$ to denote isomorphisms in $\D(A)$ and reserve $\cong$ for isomorphisms of DG $A$-modules. 
\end{chunk}

  \begin{chunk} 
  For a DG $A$-module $M$, define   $$\RHom_A(M,-):=\Hom_A(P,-)\text{ and }M\ot_A-=P\otimes_A-$$ where $P$ is a semiprojective resolution of $M$ over $A$. By \ref{semiproj} and \ref{sres}, $\RHom_A(M,-)$ and $M\ot_A-$  are well-defined exact endo-functors on $\D(A)$.  For each object $N$ of $\D(A)$, define $$\Ext_A^*(M,N):=\h(\RHom_A(M,N))\text{ and }\Tor^A_*(M,N)=\h(M\ot_A N). $$
\end{chunk}

\begin{chunk}\label{augiso}
Let $\vp: A'\to A$ be a morphism of DG $Q$-algebras and let $M$ and $N$ be DG $A$-modules. By \cite[6.10]{FHT}, If $\vp$ is a quasi-isomorphism, then $\vp$ induces an isomorphism of graded $Q$-modules  $$\Ext_\vp^*(M,N): \Ext_A^*(M,N)\to \Ext_{A'}^*(M,N).$$
\end{chunk}

\begin{chunk}
Let $\T$ denote a triangulated category.  A full subcategory $\T'$ of $\T$ is called  triangulated if it is closed under suspension and  has the two out of three property on exact triangles. If, in addition, $\T'$ is closed under direct summands, we say that $\T'$ is a thick subcategory of $\T$. 

Let  $X$ be in $\T$. Define the thick closure of $X$ in $\T$, denoted $\Thick_\T X$, to be the intersection of all thick subcategories of $\T$ containing $X$. As an intersection of thick subcategories is a thick subcategory, $\Thick_{\T}X$ is the smallest thick subcategory of $\T$ containing $X$. See \cite[Section 2]{HPC} for an inductive definition of $\Thick_{\T}X $.
\end{chunk}

For the rest of the section, assume that $Q$ is a commutative noetherian ring and  $A$ is a non-negatively graded,  commutative DG $Q$-algebra such that $H_i(A)$ is finitely generated over $Q$ for each $i$, and the canonical map $Q\to H_0(A)$ is surjective. 

\begin{chunk}\label{LJ}  Suppose $(Q,\n,k)$ is  local and let $M$ be in $\D^f(A)$. In this context, we can test whether $M$ is in $\Thick_{\D(A)}A$ by calculating the eventual vanishing of certain graded Ext-modules. That is,    J{\o}rgensen established  in \cite[2.2]{PJor} that $M$ is in  $\Thick_{\D(A)} A$ if and only if $\Ext_{A}^{\gg 0}(M,k)=0$. \end{chunk}

\begin{chunk}\label{TJ}
 Let $M$ be a complex of $Q$-modules. The  amplitude of $M$ is defined to be 
$$\amp M:=\sup \{i: H_i(M)\neq 0\}-\inf\{i: H_i(M)\neq 0\}.$$  By  \cite[4.1]{PJor}, if $\amp A<\infty$, then for each nontrivial object $M$ of $\Thick_{\D(A)}A$ $$\amp A\leq \amp M.$$
\end{chunk}


\section{DG Modules Over a Graded Commutative Noetherian Ring}\label{s:coh}

In this section, we  follow the  convention in \ref{c:grade}  of grading objects cohomologically. The  reason being that theory in this section will be applied to studying DG modules and  graded Ext-modules over a ring of cohomology operators (see Section \ref{refco} for  details).

\subsection{Support} 

Let $\A=\{\A^{i}\}_{i\geq 0}$ be a  graded, commutative noetherian ring.  
Recall that, as a set, $\Proj \A$ consists of the homogeneous prime ideals of $\A$ not containing the irrelevant ideal $\A^{>0}:=\{\A^i\}_{i>0}.$  The topology on $\Proj \A$ is the Zariski topology, which has as its  closed sets those of the form $$\{\mathsf{p} \in \Proj \A: \mathsf{p}\supseteq \mathcal{I}\}$$ where $\mathcal{I}$ is a homogeneous ideal of $\A$.

\begin{chunk} We regard $\A$ as a DG algebra with trivial differential.  For a DG $\A$-module $\mathsf{X}=\{\mathsf{X}^i\}_{i\in \Z}$, $\h(X):=\{\h^i({\mathsf{X}})\}_{i\in \Z}$ is a graded $\A$-module and,  as usual,  $\mathsf{X}$ is an object of $\D^f(\A)$ if and only if $ \h(X)$ is a finitely generated graded $\A$-module. 
\end{chunk}

\begin{chunk}\label{localization}Let $\mathsf{X}$ be a DG $\A$-module. For each $\mathsf{p}\in \Proj \A$ we let $\mathsf{X}_{\mathsf{p}}$ denote the homogeneous localization of $\mathsf{X}$ at $\mathsf{p}$. We let $\kappa(\mathsf{p}):=\A_{\mathsf{p}}/{\mathsf{p}}\A_{\mathsf{p}}.$ By \cite[1.5.7]{BH}, $$\kappa(\mathsf{p})\cong k[t,t^{-1}]$$ for some field $k$ and a variable $t$ of   positive (cohomological) degree.
\end{chunk}

 \begin{definition}\label{defsup} The \emph{cohomological support} of a DG  $\A$-module $\mathsf{X}$ is defined to be  $$\gsupp{\A} \mathsf{X}:=\{\mathsf{p}\in \Proj \A:\mathsf{X}\ot_\A\kappa(\mathsf{p})\not\simeq 0\}.$$\end{definition}

\begin{chunk}\label{supform}
Let $S=k[t,t^{-1}]$  where $k$ is  a field and $t$ is a variable of positive cohomological degree. As $S$ is a graded field, for each DG $S$-module $X$ there exists a surjective homotopy equivalence $X\to \h(X)$ and we have an isomorphism of DG $S$-modules  $$\h(X)\cong  S^{(\beta)}$$  where $\beta$ is  a $k$-basis for $\h^0(X)$. Thus, there is a natural isomorphism $$\h(\Hom_S(X,-))\cong \Hom_S(\h(X),\h(-)).$$ Therefore, each DG $S$-module is semiprojective. 
\end{chunk}

 \begin{chunk}\label{tensor}
For DG $\A$-modules  $\mathsf{X}$ and $\mathsf{X}'$, $$\gsupp{\A}( \mathsf{X}\ot_\A  \mathsf{X}')=\gsupp{\A} \mathsf{X}\cap \gsupp{\A} \mathsf{X}'.$$ Indeed, for any homogeneous prime $\mathsf{p}$ we have the following  \begin{align*} \mathsf{X}\ot_\A  \mathsf{X}'\ot_\A\kappa(\mathsf{p})&\simeq ( \mathsf{X}\ot_\A\kappa(\mathsf{p}))\ot_{\kappa(\mathsf{p})}  (\mathsf{X}'\ot_\A\kappa(\mathsf{p})) \\
&\simeq( \mathsf{X}\ot_\A\kappa(\mathsf{p}))\otimes_{\kappa(\mathsf{p})}  (\mathsf{X}'\ot_\A\kappa(\mathsf{p}))\\ 
&\simeq \kappa(\mathsf{p})^{(\beta)}\otimes_{\kappa(\mathsf{p})} \kappa(\mathsf{p})^{(\beta')}. \end{align*} where $\beta$ and $\beta'$ are the $k$-bases for $\h^0(\mathsf{X})$ and $\h^0(\mathsf{X}')$, respectively (see  \ref{supform} for the last two isomorphisms).
\end{chunk}

\begin{chunk}\label{fgsup}
Let $\mathsf{X}$ be an object of $\D^f(\A)$. By \cite[2.4]{CI}, $$\gsupp{\A}\mathsf{X}=\{\mathsf{p}\in \Proj \A: M_{\mathsf{p}}\not\simeq 0\}.$$ Since localization is exact, $M_{\mathsf{p}}\simeq 0$ if and only if $\h(M)_{\mathsf{p}}=0$. Therefore, there is an equality \begin{align*}\gsupp{\A}\mathsf{X}&=\gsupp{\A}\h(X) \\
&=\{\mathsf{p}\in \Proj \A: \mathsf{p}\supseteq \text{ann}_\A(\h(\mathsf{X}))\}\end{align*}  where the second equality holds  by \cite[2.2(4)]{AI} . Thus, $\gsupp{\A}\mathsf{X}$ is a closed subset of $\Proj \A$ whenever $\mathsf{X}$ is an object of $\D^f(\A)$. 
\end{chunk}

 The next result follows easily from the definition of cohomological support (also see \cite[2.2]{AI}). 
\begin{proposition}\label{gradedsup}
Let  $\A=\{\A^{i}\}_{i\geq 0}$ be a cohomologically graded, commutative noetherian ring. 
\begin{enumerate}
\item Let $\mathsf{X}$ be a DG $\A$-module and $n\in \Z$. Then $\gsupp{\A}\mathsf{X}=\gsupp{\A}(\shift^n\mathsf{X}).$
\item  Let  $0\to\mathsf{X}'\to \mathsf{X}\to \mathsf{X}''\to 0$ be an exact sequence of DG $\A$-modules where either:
\begin{enumerate}
\item it is split exact, or
\item  each DG $\A$-module has \emph{trivial differential} and is an object of $\D^f(A)$.
\end{enumerate}
Then   $$\gsupp{\A} \mathsf{X}=\gsupp{\A} \mathsf{X}'\cup\gsupp{\A} \mathsf{X}''.$$
\item If $\mathsf{X}$ is an object of $\D^f(\A)$, then $\gsupp{\A}\mathsf{X}=\emptyset$ if and only if $\h^{\gg 0}(\mathsf{X})=0.$
\item Let $\mathsf{X}$ and $\mathsf{X'}$ be objects of $\D^f(\A)$ with trivial differential. Then $$\gsupp{\A}(\mathsf{X}\otimes_\A\mathsf{X'})=\gsupp{\A}\mathsf{X}\cap \gsupp{\A}  \mathsf{X'}.$$
\end{enumerate}
\end{proposition}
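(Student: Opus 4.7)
Parts (1) and (2a) I would handle by elementary manipulations with the functor $-\ot_\A\kappa(\mathsf{p})$. For (1), the plan is to use the natural isomorphism $(\shift^n\mathsf{X})\ot_\A\kappa(\mathsf{p})\simeq\shift^n(\mathsf{X}\ot_\A\kappa(\mathsf{p}))$ in $\D(\kappa(\mathsf{p}))$, so that the two sides vanish simultaneously. For (2a), a split short exact sequence of DG $\A$-modules gives $\mathsf{X}\cong\mathsf{X}'\oplus\mathsf{X}''$ as DG $\A$-modules; applying $-\ot_\A\kappa(\mathsf{p})$ preserves this decomposition, and a direct sum in $\D(\kappa(\mathsf{p}))$ is zero if and only if each summand is zero.

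For parts (2b), (3), and (4) my main tool is \ref{fgsup}, which for $\mathsf{Y}\in\D^f(\A)$ identifies $\gsupp{\A}\mathsf{Y}$ with the Zariski-closed subset of $\Proj\A$ cut out by $\ann_\A\h(\mathsf{Y})$; this reduces each assertion to a classical statement about supports of finitely generated graded $\A$-modules. For (2b), the trivial differential hypothesis makes the given short exact sequence of DG $\A$-modules literally a short exact sequence of finitely generated graded $\A$-modules, so the standard equality $\Supp_\A\mathsf{X}=\Supp_\A\mathsf{X}'\cup\Supp_\A\mathsf{X}''$ in $\Spec\A$, intersected with $\Proj\A$, yields the claim. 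For (4), the tensor product $\mathsf{X}\otimes_\A\mathsf{X}'$ is itself a finitely generated graded $\A$-module with trivial differential, so I would apply \ref{fgsup} and then invoke the classical identity $\Supp_\A(M\otimes_\A N)=\Supp_\A M\cap\Supp_\A N$ for finitely generated modules, which follows from graded Nakayama's lemma at each homogeneous prime of $\A$.

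For (3), \ref{fgsup} reduces the claim to showing that $\A^{>0}\subseteq\sqrt{\ann_\A\h(\mathsf{X})}$ is equivalent to $\h^i(\mathsf{X})=0$ for $i\gg 0$. The first condition is in turn equivalent to some power $(\A^{>0})^N$ annihilating $\h(\mathsf{X})$. Using finite generation of $\h(\mathsf{X})$ together with a fixed finite set of homogeneous generators $a_1,\ldots,a_k$ of $\A^{>0}$ of degrees $d_i>0$, I would sandwich $(\A^{>0})^N$ between $\A^{\geq N\max d_i}$ and $\A^{\geq N\min d_i}$ and combine this with the fact that $\h(\mathsf{X})$ is bounded below in cohomological degree. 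This comparison of powers of the irrelevant ideal with high-degree pieces of $\A$ is the main technical point and the only step that goes beyond \ref{fgsup} and textbook facts about supports of finitely generated modules.
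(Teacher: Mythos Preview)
Your proposal is correct. The paper does not actually give a proof of this proposition: it simply states that the result ``follows easily from the definition of cohomological support (also see \cite[2.2]{AI})'' and moves on. Your argument is a faithful expansion of that hint --- using the definition directly for (1) and (2a), and invoking \ref{fgsup} to reduce (2b), (3), and (4) to standard facts about supports and annihilators of finitely generated graded modules over a noetherian graded ring. There is no substantive difference in approach to report; you have just supplied the routine details the paper omits.
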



	\subsection{Finite Generation via Koszul Objects}
	\begin{chunk}\label{coseq}
Let $a\in \A$ be a homogeneous element of degree $d$. For a DG $\A$-module $\mathsf{X}$ we define  $$\kos{\mathsf{X}} a=\cone(\shift^{-d}\mathsf{X}\xra{a\cdot} \mathsf{X}).$$ As  $a\h(\kos{\mathsf{X}} a)=0$,  $\h(\kos{\mathsf{X}} a)$ is a graded $\A/a$-module. 
Also,  the following is an exact sequence of graded $\A$-modules  $$\shift^{-d}\h(\mathsf{X})\xra{a} \h(\mathsf{X})\to \h(\kos{\mathsf{X}} a)\to \shift^{-d+1}\h(\mathsf{X})\xra{a} \shift \h(\mathsf{X}).$$  \end{chunk}

\begin{chunk}\label{reg}
Let $\bm{a}:=a_1,\ldots, a_n\in \A$ be a sequence of homogeneous elements. For a DG $\A$-module $\mathsf{X}$, we define the \emph{Koszul object of $\mathsf{X}$ with respect to $\bm{a}$ } to be $$\kos{\mathsf{X}} {\bm{a}}:=((\kos{(\kos{\mathsf{X}}a_1)} a_2 )\ldots \sslash a_n).$$ It follows that $\h(\kos{\mathsf{X}} {\bm{a}})$ is a  graded $\A/(\bm{a})$-module. Furthermore, if $\bm{a}$ is a regular sequence on $\mathsf{X}^\natural$ then  we have the following isomorphism in $\D(\A)$ $$\kos{\mathsf{X}}{ \bm{a}} \xra{\simeq } \mathsf{X}/(\bm{a})\mathsf{X}.$$ 
\end{chunk}

\begin{chunk}\label{gradedNAK}
We recall  the \emph{graded version} of Nakayama's lemma: Let $\mathsf{M}$ be a graded $\A$-module such that $\mathsf{M}$ is bounded below,  i.e., $\mathsf{M}^{i}=0$ for $i \ll 0$. If $\A^{>0}\mathsf{M}=0$, then $\mathsf{M}=0$.  In particular, if $\mathsf{M}/\A^{>0}\mathsf{M}$ is a finitely generated graded $\A$-module, then $\mathsf{M}$ is a finitely generated graded $\A$-module. \end{chunk} 

\begin{theorem}\label{dgfg}
Let $\mathsf{X}$ be a DG $\A$-module such that $\h^{\ll 0}(\mathsf{X})=0$. For each sequence of homogeneous elements $\bm{a}:=a_1,\ldots, a_n\in \A$ of positive degree, $\h(\mathsf{X})$ is a finitely generated graded $\A$-module if and only if $\h(\kos{\mathsf{X}}{ \bm{a}})$ is a finitely generated graded $\A/(\bm{a})$-module. 
\end{theorem}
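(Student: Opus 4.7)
The approach is induction on $n$, the length of $\bm{a}$. The forward implication is routine: the long exact sequence recorded in \ref{coseq} presents $\h(\kos{\mathsf{X}}{a_1})$ as an extension of a shifted submodule of $\h(\mathsf{X})$ by a quotient of $\h(\mathsf{X})$, so when $\h(\mathsf{X})$ is finitely generated over the noetherian ring $\A$ both subquotients are, and hence so is $\h(\kos{\mathsf{X}}{a_1})$ (equivalently, it is finitely generated over $\A/(a_1)$, since it is annihilated by $a_1$). Positivity of $|a_1|$ ensures $\h(\kos{\mathsf{X}}{a_1})$ is still bounded below, so iterating with $a_2,\ldots,a_n$ yields the direction $\Rightarrow$.

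For the converse, setting $\mathsf{Y} := \kos{\mathsf{X}}{(a_1,\ldots,a_{n-1})}$ we have $\kos{\mathsf{X}}{\bm{a}} = \kos{\mathsf{Y}}{a_n}$; the $n=1$ case applied to $\mathsf{Y}$ would give that $\h(\mathsf{Y})$ is a finitely generated $\A/(a_1,\ldots,a_{n-1})$-module, and the inductive hypothesis would then show $\h(\mathsf{X})$ is a finitely generated $\A$-module. So the essential step is $n=1$: assuming $\h(\kos{\mathsf{X}}{a})$ is a finitely generated $\A/(a)$-module with $a$ homogeneous of positive degree $d$, the exact sequence of \ref{coseq} yields
\[
0 \to \h(\mathsf{X})/a\h(\mathsf{X}) \to \h(\kos{\mathsf{X}}{a}) \to \shift^{-d+1}\bigl(0 :_{\h(\mathsf{X})} a\bigr) \to 0,
\]
and since $\A$ is noetherian, the submodule $\h(\mathsf{X})/a\h(\mathsf{X})$ is a finitely generated graded $\A$-module.

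To promote this to finite generation of $\h(\mathsf{X})$ itself I would run a Nakayama-style argument tailored to the single element $a$. Lift finitely many homogeneous generators of $\h(\mathsf{X})/a\h(\mathsf{X})$ to elements $x_1,\ldots,x_s \in \h(\mathsf{X})$, and let $N \con \h(\mathsf{X})$ be the graded $\A$-submodule they generate. By construction $\h(\mathsf{X}) = N + a\h(\mathsf{X})$, and since $aN \con N$, iterating gives $\h(\mathsf{X}) = N + a^k\h(\mathsf{X})$ for every $k \geq 1$. For a homogeneous $y \in \h(\mathsf{X})$, writing $y = n_k + a^k z_k$ forces $|z_k| = |y| - kd$, which drops below any fixed lower bound on $\h(\mathsf{X})$ once $k$ is large; hence $z_k = 0$ and $y \in N$, so $\h(\mathsf{X}) = N$ is finitely generated. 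The main obstacle I anticipate is precisely this last step: the graded Nakayama lemma recalled in \ref{gradedNAK} requires the full irrelevant ideal to annihilate the quotient, whereas here only one element is available — the saving grace is the combination $\h^{\ll 0}(\mathsf{X}) = 0$ and $|a| > 0$, which lets repeated multiplication by $a$ push degrees arbitrarily far below any fixed bound.
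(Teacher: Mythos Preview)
Your proof is correct and matches the paper's approach: reduce by induction to $n=1$, use the exact sequence of \ref{coseq} for both directions, and finish the converse with a graded Nakayama argument. The obstacle you flag does not actually arise---since $a\in\A^{>0}$, the module $\h(\mathsf{X})/\A^{>0}\h(\mathsf{X})$ is a quotient of $\h(\mathsf{X})/a\h(\mathsf{X})$ and hence finitely generated, so the paper simply invokes \ref{gradedNAK} as stated; your explicit iteration argument is a perfectly good alternative route to the same conclusion.
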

\begin{proof}
Since $\kos{\mathsf{X}} {\bm{a}}$ is defined inductively, it suffices to show that theorem holds when $\bm{a}=a$ is just a single homogeneous element $a$ of degree $d>0$. By \ref{coseq}, we have an exact sequence of DG $\A$-modules 
\begin{equation}\shift^{-d}\h(\mathsf{X})\xra{a} \h(\mathsf{X})\to \h(\kos{\mathsf{X}} a)\to \shift^{-d+1}\h(\mathsf{X})\xra{a} \shift \h(\mathsf{X}).\label{special}\end{equation}

Assume that $\h(X)$ is a finitely generated graded $\A$-module. Hence, (\ref{special}) implies that $\h(\kos{\mathsf{X}} a)$ is finitely generated over $\A$.  Since $a\h(\kos{\mathsf{X}} a)=0$, it follows that $\h(\kos{\mathsf{X}}{ \bm{a}})$ is a finitely generated graded $\A/({a})$-module.

Conversely,  assume that $\h(\kos{\mathsf{X}} a)$ is a finitely generated graded $\A/(a)$-module.  Since $a\h(\kos{\mathsf{X}} a)=0$ then $\h(\kos{\mathsf{X}} a)$ is finitely generated as a graded $\A$ via the canonical surjection $\A\to \A/ (a)$. From (\ref{special}), $\h(\mathsf{X})/a\h(\mathsf{X})$ is a submodule of  $\h(\kos{\mathsf{X}} a)$ and  since $\h(\kos{\mathsf{X}} a)$ is a noetherian graded $\A$-module, $\h(\mathsf{X})/a\h(\mathsf{X})$ is  a noetherian graded $\A$-module.  Therefore, $\h(\mathsf{X})/a\h(\mathsf{X})$ is  a  finitely generated graded $\A$-module. As $\h^{\ll 0}(\mathsf{X})=0$, it follows that $\left(\h(\mathsf{X})/a\h(\mathsf{X})\right)^{\ll 0}=0.$ Therefore, by \ref{gradedNAK}, it follows that $\h(\mathsf{X})$ is a finitely generated graded $\A$-module.  
\end{proof}


\section{Cohomological Operators}\label{refco}



\subsection{Koszul Complexes}\label{Koszul}

Fix a commutative noetherian ring $Q$. Let $\f=f_1,\ldots, f_n$ be a list of elements in  $Q$. Define $\text{Kos}^Q(\f)$ to  be the DG $Q$-algebra with $\text{Kos}^Q(\f)^\natural$ the exterior algebra on a free $Q$-module with basis $\xi_1,\ldots,\xi_n$ of homological degree 1, and differential $\del\xi_i=f_i.$ We  write $$\text{Kos}^Q(\f)=Q\langle \xi_1,\ldots,\xi_n|\del\xi_i=f_i\rangle.$$ 

\begin{chunk} \label{mindci}
Let $E:=\Kos^Q(\f)$. We say that $E$ is a \emph{derived complete intersection} if $Q$ is a regular ring. We say that $E$ is a \emph{minimal derived complete intersection} provided that $(Q,\n,k)$ is a regular {local} ring where $\f\con \n^2$ minimally generates $(\f)$. 
\end{chunk}
\begin{remark}\label{remark}
Imposing the condition that the base ring $Q$ is  regular  has strong implications which will be utilized heavily in Section \ref{s2}. There are two perspectives  which motivate the terminology in \ref{mindci}. 
First, when $\f$ is a $Q$-regular sequence then $E$ is a cofibrant replacement of the complete intersection ring  $Q/(\f)$ in $\D(Q)$. However, regardless whether $\f$ is assumed to be a $Q$-regular sequence, $E$ is still   an object of  $\D(Q)$ that possesses  the same homological properties as a complete intersection quotient ring of $Q$. Second, by taking the \emph{derived}  pushout of the following diagram 
 \begin{center}
 \begin{tikzcd}
\Z[x_1,\ldots, x_n]  \ar{r} \ar{d}  & Q \\ 
   \Z &  \end{tikzcd}
 \end{center} we obtain $E$, where $x_i\mapsto f_i$ along the horizontal map and the vertical map is the canonical quotient. In the special case that $\f$ is a $Q$-regular sequence, we get the complete intersection $Q/(\f)$. Either point of view suggests  derived complete intersections should naturally generalize of the    complete intersections rings. 
	
\end{remark}

\begin{chunk}
Let $(Q,\n,k)$ be a local ring. Suppose that $\f$ and $\f'$ are minimal generating set for the same ideal $I$. We have an isomorphism of DG $Q$-algebras $$\Kos^Q(\f)\xra{\cong } \Kos^Q(\f').$$  \end{chunk}

\begin{chunk}\label{kosfunc} Let  $$ E=Q\langle \xi_1,\ldots, \xi_n|\del \xi_i=f_i\rangle \text{ and }E'=Q'\langle \xi_1'\ldots, \xi_m'|\del \xi_i'=f_i'\rangle$$ be derived complete intersections. Assume further that  $Q$ and $Q'$ are both \emph{complete} regular local rings with $\h_0(E)=\h_0(E')$. There exists a derived complete intersection $E''$  and surjective DG algebra quasi-isomorphisms $E''\xra{\simeq}E$ and $E''\xra{\simeq} E'.$  

Indeed, we form the following commutative diagram with surjective ring morphisms
\begin{center}
\begin{tikzcd}
P
\arrow
[drr, bend left
, "\pi'"]
\arrow
[ddr, bend right
, "\pi",swap]
\arrow
[dr] & & \\
& Q\times_{\h_0(E)} Q'
\arrow
[r]
\arrow
[d]
& Q'
\arrow
[d] \\
& Q
\arrow
[r]
& \h_0(E)
\end{tikzcd}
\end{center} where $P$ is a regular local ring presenting the complete local ring  $Q\times_{\h_0(E)} Q'$. As $\pi$ and $\pi'$ are surjective maps between regular local rings, $\Ker \pi$ and $\Ker \pi'$ are generated by (linear) regular sequences; let $\x$ and $\x'$ minimally generate  $\Ker \pi$ and $\Ker \pi'$, respectively. Finally, we let $\g$ and $\g'$ be list of elements in $P$ such that $\g Q=\f$ and $\g' Q'=\f'$. 

First, since $(\x, \g)=(\x',\g')$ and $P$ is a local ring, we have an isomorphism of DG $P$-algebras $$\Kos^P(\x,\g)\xra{\cong} \Kos^P(\x',\g').$$ Moreover, since $\x$ and $\x'$ are regular sequences in $P$, we have that $$\Kos^P(\x, \g)\xra{\simeq} E \text{ and } \Kos^P(\x',\g')\xra{\simeq} E',$$ respectively.   Therefore,  $E'':=\Kos^Q(\x,\g)$ is a derived complete intersection with  surjective quasi-isomorphisms $E''\xra{\simeq}E$ and $E''\xra{\simeq} E'.$  
\end{chunk}

\begin{chunk}\label{kosaction}
Let $F\xra{\simeq} Q/(\f)$ be a DG $Q$-algebra resolution of $R$. We write  $$F_1=Q\xi_1'\oplus \ldots \oplus Q\xi_n'$$ where $\del^F(\xi_i')=f_i$. We have the following commutative diagram of DG $Q$-algebras\begin{center}
 \begin{tikzcd}
 & F\arrow["\simeq"]{d} \\ 
 E\arrow["\e"]{r} \arrow["\vp"]{ru} & Q/(\f) 
\end{tikzcd}\end{center} where $\e$ is the canonical augmentation map and $\vp$ is the morphism of DG $Q$-algebras determined by $$\xi_i\mapsto \xi_i'.$$ In particular, $F$ is a DG $E$-module via $$\xi_i\cdot x=\xi_i'x$$ for all $x\in F$. 
\end{chunk}

\begin{chunk}\label{selfdualkoszul}
We let $E:=\text{Kos}^Q(\f)$ for some list of elemenets $\f=f_1,\ldots,f_n$ in $Q$. There is a canonical DG $E$-module structure on $\Hom_Q(E,Q)$. Moreover, we have the following isomorphism of DG $E$-modules $$ \Hom_Q(E,Q)\cong \shift^{-n}E.$$ 
\end{chunk}

\begin{chunk}\label{kosgor}
Let $E=\Kos^Q(\f)$ for some list of elements $\f_1,\ldots, f_n$ in $Q$. Suppose $Q$ has finite injective dimension over itself and  set $$(-)^\dagger:=\RHom_E(-,\Hom_Q(E,Q)).$$ By \cite[2.1]{FIJ}, for each object $M$ of $\D^f(E)$ the following hold:
\begin{enumerate}
\item $M^\dagger$ is an object of $\D^f(E)$, and
\item the natural map $M\to M^{\dagger\dagger}$ is an isomorphism.
\end{enumerate}
By (2), it follows for each pair of objects $M$ and $N$ of $\D^f(E)$, \begin{equation}\RHom_E(M,N)\simeq \RHom_E(N^\dagger, M^\dagger).\label{goriso}\end{equation}
\end{chunk}

\begin{lemma}\label{derdual}
We let $E:=\Kos^Q(\f)$ for some list of elemenets $\f=f_1,\ldots,f_n$ in $Q$. For any object  $M$ of $\D(E)$,     $$\RHom_E(M,E)\simeq \shift^n \RHom_Q(M,Q).$$
\end{lemma}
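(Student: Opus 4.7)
The plan is to combine the self-duality of the Koszul complex recorded in \ref{selfdualkoszul} with the standard tensor-hom adjunction between $Q$ and $E$. Concretely, the identity
\[ \Hom_Q(E,Q)\cong \shift^{-n}E \]
of DG $E$-modules will convert the question about $\RHom_E(M,E)$ into a question about $\RHom_E\bigl(M,\Hom_Q(E,Q)\bigr)$, which by adjunction should collapse to $\RHom_Q(M,Q)$ once a resolution is chosen judiciously. The key observation making this adjunction work at the derived level is that $E$ is a bounded complex of free $Q$-modules, so a semiprojective DG $E$-module is automatically semiprojective when restricted along $Q\to E$.

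More precisely, the first step is to choose a semiprojective resolution $\e\colon P\xra{\simeq} M$ over $E$ in the sense of \ref{sres}. Since $E^\natural$ is a finitely generated free $Q$-module and $P^\natural$ is a projective $E^\natural$-module, $P^\natural$ is projective over $Q$; a routine filtration argument (or direct application of \ref{2of3}) then shows that $P$ is semiprojective as a DG $Q$-module. In particular,
\[ \RHom_Q(M,Q)\simeq \Hom_Q(P,Q). \]

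The second step is the standard adjunction isomorphism of DG $Q$-modules
\[ \Hom_E\bigl(P,\Hom_Q(E,Q)\bigr)\cong \Hom_Q(P\otimes_E E,Q)\cong \Hom_Q(P,Q), \]
which is strict, not merely an isomorphism in $\D(Q)$. Since $P$ is semiprojective over $E$, the left-hand side computes $\RHom_E\bigl(M,\Hom_Q(E,Q)\bigr)$. Substituting the identification $\Hom_Q(E,Q)\cong \shift^{-n}E$ from \ref{selfdualkoszul} and commuting the shift past $\RHom_E(M,-)$ in its second argument yields
\[ \shift^{-n}\RHom_E(M,E)\simeq \RHom_E\bigl(M,\Hom_Q(E,Q)\bigr)\simeq \RHom_Q(M,Q), \]
from which the desired $\RHom_E(M,E)\simeq \shift^n\RHom_Q(M,Q)$ follows by applying $\shift^n$.

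The only step that requires care is the verification that a semiprojective DG $E$-module is also semiprojective over $Q$, so that $\Hom_Q(P,Q)$ actually represents $\RHom_Q(M,Q)$; this is the one point where the specific structure of $E$ as a Koszul complex (hence semifree over $Q$) is used. Once that is in hand, everything else is a formal application of the shift-compatible tensor-hom adjunction and the self-duality $\Hom_Q(E,Q)\cong \shift^{-n}E$.
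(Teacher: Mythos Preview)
Your proof is correct and follows essentially the same route as the paper: combine the self-duality $\Hom_Q(E,Q)\cong\shift^{-n}E$ from \ref{selfdualkoszul} with tensor--hom adjunction along $Q\to E$. The paper simply writes the derived adjunction $\RHom_E(M,\Hom_Q(E,Q))\simeq\RHom_Q(M\ot_E E,Q)\simeq\RHom_Q(M,Q)$ directly, whereas you unpack it by fixing a semiprojective resolution $P\to M$ over $E$ and checking that $P$ stays semiprojective over $Q$; this is a difference in exposition rather than in strategy. One small remark: your appeal to \ref{2of3} for that last check is not quite the right citation---the cleanest justification is that a semiprojective DG $E$-module is a homotopy retract of a semifree one, and semifree DG $E$-modules restrict to semifree DG $Q$-modules because $E$ itself is bounded free over $Q$.
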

\begin{proof}
By \ref{selfdualkoszul} and adjunction, we get the first two following isomorphisms  below
\begin{align*}\RHom_E(M,E)&\simeq\RHom_E(M,\shift^n \Hom_Q(E,Q)) \\
&\simeq \RHom_Q(M\ot_EE,\shift^n Q)\\
&\simeq \RHom_Q(M,\shift^n Q) \\
&\simeq \shift^n \RHom_Q(M,Q).
\end{align*} The last  two isomorphisms are standard. 
\end{proof}

\begin{proposition}\label{p:duality}
With the notation and assumptions in \ref{kosgor}, if  $M$ is an object of $\D(E)$ where $\h_i(M)$ is a finitely generated $Q$-modules for each $i$ and $\h_i(M)=0$ for all $i\ll 0$, then    $$M^{\dagger\dagger}\simeq M.$$
\end{proposition}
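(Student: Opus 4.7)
The plan is to extend the biduality isomorphism known for $\D^f(E)$ (see \ref{kosgor}) to the wider class in the proposition by means of a truncation argument. Fix $i\in\Z$ and consider the intelligent truncation $\tau_{\le N}(M)$ for any $N\ge i+1$. Since $\h_j(M)$ is finitely generated over $Q$ for each $j$ and vanishes for $j\ll 0$, the complex $\tau_{\le N}(M)$ is bounded and its total homology is finitely generated over $Q$, hence over $\h(E)$, so $\tau_{\le N}(M)\in\D^f(E)$. Applying \ref{kosgor} yields a biduality isomorphism $\tau_{\le N}(M)\xra{\simeq}\tau_{\le N}(M)^{\dagger\dagger}$.

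By naturality, I would apply $(-)^{\dagger\dagger}$ to the truncation triangle $\tau_{\le N}(M)\to M\to\tau_{>N}(M)\to$ and compare the resulting homology long exact sequences: in order to conclude that the natural map $\h_i(M)\to \h_i(M^{\dagger\dagger})$ is an isomorphism, it suffices to prove the vanishing $\h_i(\tau_{>N}(M)^{\dagger\dagger})=0=\h_{i+1}(\tau_{>N}(M)^{\dagger\dagger})$. Combined with the evident equality $\h_i(\tau_{\le N}(M))=\h_i(M)$ (as $i\le N$) and the biduality iso for the truncation, this would yield the desired isomorphism.

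To establish the vanishing I would first identify $(-)^\dagger$ more explicitly. By \ref{selfdualkoszul}, $\Hom_Q(E,Q)\cong \shift^{-n}E$, and Lemma \ref{derdual} then provides a natural isomorphism $M^\dagger\simeq \RHom_Q(M,Q)$ in $\D(E)$. Now $\tau_{>N}(M)$ is homologically bounded below by $N+1$ (equivalently cohomologically bounded above by $-N-1$) with finitely generated cohomology in each degree, so it admits a projective $Q$-resolution concentrated in cohomological degrees $\le -N-1$. Hence $\RHom_Q(\tau_{>N}(M),Q)$ is concentrated in cohomological degrees $\ge N+1$, and iterating, $\RHom_Q(\RHom_Q(\tau_{>N}(M),Q),Q)$ lives in cohomological degrees $\le -N-1$. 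Equivalently, $\h_k(\tau_{>N}(M)^{\dagger\dagger})=0$ for every $k\le N$, and in particular for $k=i$ and $k=i+1$.

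The main obstacle I foresee is the boundedness bookkeeping in the third paragraph, namely tracking how each application of $\RHom_Q(-,Q)$ affects cohomological bounds (and checking that the natural biduality map in $\D(E)$ does correspond under Lemma \ref{derdual} to the $Q$-level biduality map, so that naturality may be invoked in the first place). Once this is set up, the remainder is a formal application of the triangulated structure and \ref{kosgor}.
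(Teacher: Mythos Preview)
Your truncation strategy is sound and does lead to a proof, but the ``iterating'' step contains exactly the bookkeeping error you anticipated. The complex $\Hom_Q(P,Q)$ is indeed a complex of finitely generated projective $Q$-modules concentrated in cohomological degrees $\ge N+1$, but that means it is homologically bounded \emph{above}, hence \emph{not} K-projective, so you cannot compute the second $\RHom_Q(-,Q)$ by simply applying $\Hom_Q(-,Q)$ again. The fix is to use instead a bounded injective resolution $Q\xra{\simeq}I$ of length $d=\id_QQ<\infty$ (available by the standing hypothesis in \ref{kosgor}); then $\RHom_Q(\Hom_Q(P,Q),Q)\simeq\Hom_Q(\Hom_Q(P,Q),I)$ is concentrated in cohomological degrees $\le d-N-1$, so $\h_k\bigl(\tau_{>N}(M)^{\dagger\dagger}\bigr)=0$ for $k\le N-d$ rather than $k\le N$. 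Choosing $N\ge i+1+d$ then gives the vanishing you need at $k=i,i+1$, and the rest of your argument goes through.

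By contrast, the paper avoids truncation entirely. It first records the general identification $X^{\dagger\dagger}\simeq\RHom_Q(\RHom_Q(X,Q),Q)$ (via \ref{selfdualkoszul} and Lemma~\ref{derdual}), then chooses a semifree resolution $F\xra{\simeq}M$ over $E$ with $F^\natural=\coprod_{j\ge i}\shift^j(E^{\beta_j})^\natural$; as a $Q$-complex this $F$ is bounded below and degreewise finitely generated free, so the standard biduality $\RHom_Q(\RHom_Q(F,Q),Q)\simeq F$ over a ring of finite injective dimension applies directly. Your route bootstraps from the $\D^f(E)$ case of \ref{kosgor} and controls the tail separately, while the paper's route works globally with a single resolution; both ultimately rest on $Q$ being Gorenstein, but the paper's argument is shorter and sidesteps the semiprojectivity issue you ran into.
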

\begin{proof}First, we state a general isomorphism in (\ref{e:dd}), below. 
Let $X$ be a DG $E$-module. By \ref{selfdualkoszul} and  Lemma \ref{derdual}  the first and third isomorphisms below hold, respectively,   
\begin{align*}
X^\dagger&= \RHom_E(X,\Hom_Q(E,Q)) \\
&\simeq\RHom_E(X,\shift^{-n}E) \\
&\simeq \shift^{-n}\RHom_E(X,E) \\
&\simeq \shift^{-n}\shift^{n}\RHom_Q(X,Q) \\
&\simeq \RHom_Q(X,Q).
\end{align*} Hence, \begin{equation}\label{e:dd}X^{\dagger\dagger}\simeq \RHom_Q(\RHom_Q(X,Q),Q).\end{equation}

Returning the setup of the proof, by  assumption, there exists a semifree  resolution $F\xra{\simeq} M$ such that $$F^\natural=\coprod_{j=i}^\infty \shift^j (E^{\beta_j})^\natural.$$ 
Thus,  applying (\ref{e:dd}) yields 
\[M^{\dagger\dagger}\simeq F^{\dagger\dagger}
\simeq \RHom_Q(\RHom_Q(F,Q),Q ).\]
 Since $Q$ has finite injective dimension over itself and $F$ is bounded below complex of $Q$-modules that is  finitely generated (over $Q$) in each degree, it follows that $$\RHom_Q(\RHom_Q(F,Q),Q ) \simeq F$$ (see \cite[A.4.24]{Lars}). Hence,   $$M^{\dagger\dagger}\simeq  F\simeq M.\qedhere$$
\end{proof}

	\begin{Construction}\label{const1}
	 Suppose $E=\text{Kos}^Q(\f)$ 
	 and   set $\Poly:=Q[\chi_1,\ldots, \chi_n]$ to be  the polynomial ring  in $n$  variables of cohomological degree two. As a graded $Q$-algebra, $\Poly$ has a graded basis   \begin{equation}\label{e:basis}\{\chi^H:=\chi_1^{h_1}\ldots \chi_n^{h_n}\}_{H\in \N^n}.\end{equation}
	 We let  $\Gamma$  be the graded $Q$-linear dual of $\Poly$, i.e., $$	\Gamma:=\Hom_Q^*(\Poly,Q)=\bigoplus_{i\in \N} \Hom_Q(\Poly^i,Q).$$ Let  $\{y^{(H)}\}_{H\in \N^n}$ be the graded $Q$-basis  of ${\Gamma}$  which is dual to (\ref{e:basis}). We will regard $\Gamma$ as a  graded $\Poly$-module with structure  determined  by
	 \[
	 \chi_i \cdot y^{(H)}=\left\{\begin{array}{cl}  y^{(h_1,\ldots,h_{i-1}, h_i-1,h_{i+1}, \ldots,h_n)} & h_i\geq 1\\
	 0 & h_i=0.
	 \end{array}\right.
	 \]

	 We let  $E^e:=E\otimes_QE^o$ where $E^o$ denotes the opposite DG algebra of $E$ and 
	 we regard $E$ as a DG $E^e$-module via the multiplication map $\mu: E^e\to E.$   
	By \cite[2.6]{CD2},  $E$ admits a semiprojective  resolution $L$ over $E^e$ defined in the following way
 	 \begin{align*} &L^\natural :=(E\otimes_Q \Gamma\otimes_Q E)^{\natural}\\  
	 \del^{L}=\del^{E}\otimes 1\otimes 		1+&1\otimes1\otimes \del^{{E}}+\sum_{i=1}^n (1\otimes \chi_i\otimes \lambda_i-\lambda_i\otimes \chi_i\otimes 	1)\end{align*} where $\lambda_i: E\to E$ is left multiplication by $\xi_i,$  and  augmentation map $\e: L\to E$  is given by $$a\otimes y^{(H)}\otimes b\mapsto \left\{\begin{array}{cl}  ab 	& |H|=0 \\ 0 & |H|>0 \end{array}\right..$$
	\end{Construction}


	\subsection{Universal Resolutions}\label{UKres}
	
Throughout this section,  $\f=f_1,\ldots, f_n$ is a list of elements in a commutative noetherian ring $Q$, $E=Q\langle \xi_1,\ldots, \xi_n|\del \xi_i=f_i\rangle,$ and $L$ is the semiprojective resolution of $E$ over $E^e$ defined  in Construction \ref{const1}. 

\begin{chunk}\label{Koszulres}
Let $M$ be a DG $E$-module.  By  \ref{sres} (or \cite[2.1]{CD2} in this specific context), there exists a quasi-isomorphism of DG $E$-modules  $F\xra{\simeq} M$ where   $F$ is semiprojective as a DG $Q$-module when regarded as a DG $Q$-module via restriction of scalars along $Q\to E$. We call such a  resolution a \emph{Koszul resolution of M}. Moreover, if $M$ is an object of $\Thick_{\D(Q)}Q$ when regarded as a complex of $Q$-modules, then there exists a Koszul resolution $F$ where each $F_i$ is  finitely generated as a $Q$-module and $F_i=0$ for all $|i|\gg 0.$  Such a Koszul resolution is called \emph{finite}. 
\end{chunk}
	
	\begin{chunk}\label{univres}
	 Let $M$ be a DG $E$-module. Fix a Koszul resolution $\delta:F\xra{\simeq} M$ and define  $$U_E(F):=L\otimes_E F,$$ with   augmentation map $\e^M: U_E(F)\to M$  given by 
  $$a\otimes y^{(H)}\otimes x\mapsto \left\{\begin{array}{cl}  a\delta(x) & |H|=0 \\ 0 & |H|>1 \end{array}\right.$$
	  By \cite[2.4]{CD2},  $\e^M: U_E(F)\xra{\simeq} M$  is a semiprojective DG $E$-module resolution of $M$ . We call $U_E(F)$ the \emph{universal resolution of M over E (with respect to $\delta:F\to M$)} which is a  DG $\Poly$-module via the  $\Poly$-action on $\Gamma$. 
\end{chunk}
	 
	 \begin{chunk} \label{c:polynomialaction}
 Let $M$ and  $N$ be DG $E$-modules and let $U$ be a universal resolution of M over E  associated to the Koszul resolution $F\xra{\simeq }M$ (see \ref{univres}). Since  $U$ is a  DG $\Poly$-module, it follows that   $\Hom_{E}(U,N)$ is  a DG $\Poly$-module. Hence, $$\Ext_{E}^*(M,N)=H^*(\Hom_{E}(U,N))$$ is a graded module over $\Poly$. The $\Poly$-module structure on $\Ext_{E}^*(M,N)$ is independent of choice of Koszul resolution in the construction above (see  \cite[3.2.2]{Pol}).\end{chunk}

\begin{proposition}\label{graded}
Let $M$ and $N$ be DG $E$-modules.  The $\Poly$-module structure on $\Ext_E^*(M,N)$ is independent of choice of Koszul resolution for $M$. Moreover, the $\Poly$-module action on $\Ext_E^*(M,N)$ is functorial in both $M$ and $N$. Namely, we have the following isomorphisms of graded $\Poly$-modules. 
\begin{align*}\Ext_E^*(M,\shift N)\cong \shift&\Ext_E^*(M,N)\cong \Ext_E^*(\shift^{-1}M,N), \\
\Ext_E^*(M,N\oplus N')&\cong \Ext_E^*(M,N)\oplus \Ext_E^*(M,N'),\text{ and }  \\
\Ext_E^*(M\oplus M', N)&\cong \Ext_E^*(M,N)\oplus \Ext_E^*(M',N)\end{align*}
and for each exact triangle $M'\to M\to M''\to $ in $\D(E)$ and each object $N$ of $\D(E)$ we get an exact sequences of graded $\Poly$-modules: 
\begin{align*}\Ext_E^*(M'',N)&\to \Ext_E^*(M,N)\to \Ext_E^*(M',N) \to \shift \Ext_E^*(M'',N) \text{ and } \\
\Ext_E^*(N,M')&\to \Ext_E^*(N,M)\to \Ext_E^*(N,M'') \to \shift \Ext_E^*(N,M').\end{align*}
\end{proposition}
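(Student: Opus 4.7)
The plan is to derive all statements from the functorial properties of the universal resolution construction $U_E(-) = L \otimes_E -$, where by Construction \ref{const1} $L$ is a DG $\Poly$-$E^e$-bimodule (with $\Poly$ acting via its action on $\Gamma$). Thus for any DG $E$-module $F$, the tensor product $L\otimes_E F$ is naturally a DG $\Poly$-module, and $\Hom_E(L\otimes_E F, N)$ inherits a DG $\Poly$-module structure for any DG $E$-module $N$.

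For independence of the Koszul resolution, given two Koszul resolutions $F, F'\xra{\simeq} M$, the semiprojectivity of $F$ as a DG $E$-module yields an $E$-linear quasi-isomorphism $F\xra{\simeq} F'$ over $M$, unique up to homotopy. Applying $L\otimes_E-$ produces a quasi-isomorphism $U_E(F)\xra{\simeq} U_E(F')$ of DG $\Poly$-modules, and then $\Hom_E(-,N)$ yields a $\Poly$-linear quasi-isomorphism between the two Hom complexes, giving an isomorphism of graded $\Poly$-modules on cohomology. For the shift and direct sum formulas, observe that $U_E(\shift^i F)\cong \shift^i U_E(F)$ and $U_E(F\oplus F')\cong U_E(F)\oplus U_E(F')$ as DG $\Poly$-modules, since the $\Poly$-action is carried entirely by the $L$-factor; the claimed isomorphisms then follow by applying $\Hom_E(-,N)$ (or symmetrically $\Hom_E(U_E(F),-)$) and taking cohomology, which commutes with $\shift$ and $\oplus$.

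For the two long exact sequences, the key step is to realize an exact triangle $M'\to M\to M''\to$ in $\D(E)$ by a short exact sequence $0\to F'\to F\to F''\to 0$ of Koszul resolutions that is split on underlying graded $E^\natural$-modules. This is a standard move in DG homological algebra: choose Koszul resolutions $F'\xra{\simeq}M'$ and $F''\xra{\simeq}M''$, lift the connecting morphism to $F''\to \shift F'$ using semiprojectivity of $F''$, and form $F$ as the mapping cone. Applying the exact functor $L\otimes_E-$ produces a graded-split short exact sequence of DG $\Poly$-modules, after which $\Hom_E(-,N)$ (for the first variable), or $\Hom_E(U_E(F),-)$ applied to an analogous resolution representing the triangle in the second variable, produces the desired long exact sequence of graded $\Poly$-modules.

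The main technical obstacle is verifying that the liftings and mapping cones above can be chosen so that all the relevant comparison maps respect the DG $\Poly$-structure carried by the $L$-factor; once this compatibility is in place, the usual homological-algebra arguments for triangulated functoriality transfer verbatim to the $\Poly$-module setting.
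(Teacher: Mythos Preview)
There is a genuine gap: a Koszul resolution $F\xra{\simeq}M$ is, by definition (see~\ref{Koszulres}), a DG $E$-module that is semiprojective \emph{over $Q$}, not over $E$. Indeed, the entire purpose of the universal resolution $U_E(F)=L\otimes_E F$ is to convert a $Q$-semiprojective object into an $E$-semiprojective one. Thus your lifting argument ``the semiprojectivity of $F$ as a DG $E$-module yields an $E$-linear quasi-isomorphism $F\xra{\simeq}F'$'' fails: you cannot in general lift the identity of $M$ along the surjective quasi-isomorphism $F'\to M$ to an $E$-linear map out of $F$. The same problem recurs in your treatment of exact triangles, where you need to lift the connecting morphism $M''\to\shift M'$ in $\D(E)$ to an $E$-linear map $F''\to\shift F'$ between Koszul resolutions; semiprojectivity over $Q$ does not provide this.

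The paper's proof sidesteps this by citing \cite[3.2.2]{Pol} for independence and first-variable functoriality, and then observing that second-variable functoriality is automatic because the $\Poly$-action lives entirely on the universal resolution of the first argument. The substantive argument behind that citation runs through~\ref{haction}: the $\Poly$-action on $\Ext_E^*(M,N)$ is identified with the canonical central action of $\HH^*(E|Q)$, and this Hochschild action is intrinsically independent of resolution and natural in both variables. Once naturality is established that way, the long exact sequences are automatically $\Poly$-linear. Your approach via $L\otimes_E-$ can be made to work, but you must either pass through the Hochschild description or argue directly that the $E$-linear homotopy equivalence $U_E(F)\simeq U_E(F')$ (which does exist, both sides being $E$-semiprojective) intertwines the $\Poly$-actions up to homotopy; neither step is as immediate as you suggest.
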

\begin{proof}
The first part of the proposition and the functoriality of the first entry of $\Ext_E^*(-,-)$ was shown in \cite[3.2.2]{Pol}. The functoriality in the second component of $\Ext_E^*(-,-)$ is an immediate consequence of  the $\Poly$-module structure being determined by the first entry. \end{proof}

\begin{chunk} \label{haction}
By \cite[2.9]{CD2}, we have an isomorphism of graded $Q$-algebras \begin{equation}\vp: \hh^*(E|Q):=\h(\Hom_{E^e}(L,E))\xra{\cong} \h(E)[\chi_1,\ldots,\chi_n]\label{eiso}\end{equation} where each $\chi_i$ is a cohomologically graded variable of degree two.   
Using the isomorphism of complexes $$\psi^{F,N}:\Hom_{E^e}(L, \Hom_Q(F,N))\xra{\cong}\Hom_{E}(U,N)$$ where $F$ is a semiprojective  resolution of $M$ over $Q$, it follows this $\Poly$-module action is compatible with the action of $\hh^*(E|Q)$ on $\Ext_{E}^*(M,N).$ That is,  $$\h(\psi^{F,N})( \alpha\cdot h)=\h(\psi^{F,N})(\alpha) \cdot\vp(h)$$ for every homotopy class $\alpha$ of a cycle from $\Hom_{E^e}(L, \Hom_Q(F,N))$ and $h\in \hh^*(E|Q).$
\end{chunk}

\begin{remark}
A specific  version of the following construction first appeared in \cite[3.2]{CD2} when $\f$ is a Koszul-regular sequence and $M$ and $N$ are $Q/(\f)$-modules. The differential below and the coefficients of $\Poly$ have been adjusted from its form  in \cite[3.2]{CD2} to accommodate for $N$ being a DG $E$-module instead of a $Q/(\f)$-module. 
\end{remark}

\begin{Construction}\label{c:dgsmod}
Let $X$ and $Y$ be DG $E$-modules. Define $\CC_E(X,Y)$ to be   \begin{align*}\CC_E(X,Y)^\natural& :=\Poly\otimes_Q\Hom_Q(X,Y)^\natural\\  \del^{\CC_E(X,Y)}:=1\otimes \del^{\Hom_Q(X,Y)}&+\sum_{i=1}^n \chi_i\otimes (\Hom(\lambda_i,Y)-\Hom(X,\lambda_i)).
\end{align*} A direct calculation shows that $\CC_E(X,Y)$ is a DG $\Poly$-module. 
\end{Construction}
\begin{proposition}\label{altform}
Let $Q$ be a commutative noetherian ring, $\f=f_1,\ldots, f_n\in Q$ and $E$ be the Koszul complex on $\f$. Suppose $M$ and $N$ are DG $E$-modules and let $F\xra{\simeq} M$ be a Koszul resolution of $M$. Then we have an isomorphism of DG $\Poly$-modules $$\Hom_{E}(U_E(F),N)\cong \CC_E(F,N).$$ 
\end{proposition}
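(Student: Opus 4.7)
The plan is to identify the two DG $\Poly$-modules by unwinding the construction of $U_E(F)$ and applying the tensor-hom adjunction. Recall from Construction \ref{const1} that $L^\natural = (E\otimes_Q\Gamma\otimes_Q E)^\natural$ and the right $E$-action used to form $L\otimes_E F$ is the one through the rightmost factor of $L$. Consequently $U_E(F)^\natural \cong (E\otimes_Q\Gamma\otimes_Q F)^\natural$ as graded left $E$-modules, with $E$ acting on the leftmost factor. The standard tensor-hom adjunction then gives
\[
\Hom_E(U_E(F),N)^\natural \cong \Hom_Q(\Gamma\otimes_Q F, N)^\natural \cong \Hom_Q(\Gamma,\Hom_Q(F,N))^\natural.
\]

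Using the graded $Q$-basis $\{y^{(H)}\}_{H\in\N^n}$ of $\Gamma$ and its dual basis $\{\chi^H\}$ of $\Poly$, I would identify the last term with $\Poly\otimes_Q\Hom_Q(F,N) = \CC_E(F,N)^\natural$ via the natural $Q$-linear map $\chi^H\otimes\phi \mapsto \bigl(y^{(H')}\mapsto \delta_{H,H'}\phi\bigr)$. This produces the underlying graded $Q$-module isomorphism. Compatibility with the $\Poly$-action is a short check: $\chi_i$ acts on $\Hom_E(U_E(F),N)$ by precomposition with the $\Poly$-action on $U_E(F)$, which is induced from $\chi_i\cdot y^{(H)} = y^{(H-e_i)}$ (or $0$) on $\Gamma$, and under the duality of bases this corresponds precisely to multiplication by $\chi_i$ on the first tensor factor of $\CC_E(F,N)$.

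The main obstacle is matching the differentials. I would chase
\[
\del^L = \del^E\otimes 1\otimes 1 + 1\otimes 1\otimes \del^E + \sum_{i=1}^n (1\otimes\chi_i\otimes\lambda_i - \lambda_i\otimes\chi_i\otimes 1)
\]
through the adjunction. After tensoring with $F$ over $E$ (via the rightmost factor), the rightmost $\del^E$ and $\lambda_i$ are replaced by $\del^F$ and $\lambda_i^F$; and then by $E$-linearity, the leftmost $\del^E$ and $\lambda_i^E$ act on the target $N$ side. The first two terms of $\del^L$ thus contribute exactly $1\otimes\del^{\Hom_Q(F,N)}$ to the resulting differential on $\CC_E(F,N)$. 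The twist terms $1\otimes\chi_i\otimes\lambda_i^F$ and $-\lambda_i^E\otimes\chi_i\otimes 1$ produce, respectively, $\chi_i\otimes\Hom(\lambda_i,N)$ and $-\chi_i\otimes\Hom(F,\lambda_i)$ in $\del^{\CC_E(F,N)}$, the factor of $\chi_i$ materializing because, under the duality of bases, the operator ``multiply by $\chi_i$'' on $\Gamma$ becomes ``multiply by $\chi_i$'' on $\Poly\cong\Gamma^{*}$.

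The remaining task is a careful Koszul sign bookkeeping; this is routine but warrants attention. A key simplification is that each $y^{(H)}$ sits in even cohomological degree $-2|H|$, which kills many of the Koszul signs that might otherwise appear. Once the signs are verified, the graded isomorphism constructed in the first step is promoted to an isomorphism of DG $\Poly$-modules, completing the proof.
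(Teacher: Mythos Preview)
Your proposal is correct and follows essentially the same route as the paper: use tensor--hom adjunction together with the $\Gamma$--$\Poly$ duality to identify the underlying graded $\Poly$-modules, then transport the differential of $U_E(F)$ through this identification. The paper's chain of graded isomorphisms passes through $\Hom_E(E\otimes_Q\Gamma,E)\otimes_E\Hom_Q(F,N)$ rather than directly through $\Hom_Q(\Gamma,\Hom_Q(F,N))$, and it leaves the differential chase to the reader, so your sketch is in fact more explicit than the paper's on that point.
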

\begin{proof}
Consider the following isomorphisms of graded $\Poly$-modules
\begin{align*}
\Hom_{E}(U_E(F),N)^\natural  &\cong \Hom_E(E\otimes_Q \Gamma, \Hom_Q(F,N))^\natural\\
&\cong \Hom_{E}(E\otimes_Q \Gamma, E)\otimes_{E} \Hom_Q(F,N)^\natural \\
&\cong  \Hom_{Q}(\Gamma, Q)\otimes E\otimes_{E} \Hom_Q(F,N)^\natural \\
&\cong \Poly \otimes_Q\Hom_Q(F,N)^\natural.
\end{align*} Tracing the differential of $\Hom_{E}(U_E(F),N)$  through these isomorphisms completes the proof. 
\end{proof}

\begin{remark}\label{complex}
Let $X$ be DG $E$-module and assume that $N$ is a $Q/(\f)$-module  such that $\del^{\Hom(X,N)}=0$.  In this case,  $\CC_E(X,N)$ reduces to  the following complex of graded $\Poly$-modules,  $$\CC_E^{j}(X,N)=\shift^{2j}\Poly\otimes_Q \Hom_Q(X_{-j}, N)$$ and the differential is   $$\sum_{i=1}^n\chi_i\otimes \Hom(\lambda_i,N).$$ When $X\xra{\simeq}M$ is a Koszul resolution then  by Proposition \ref{altform}, the $\Poly$-module structure of $\Ext_E^*(M,N)$ can be calculated as the cohomology of the $\CC_E(X,N)$. This was first noticed   in \cite[3.7]{CD2}.
\end{remark}

\begin{proposition}\label{underivetensor}
Let $E$ be a derived complete intersection and let $M$ and $N$ be objects of $\D^f(E)$. For any   pair of Koszul resolutions $F\xra{\simeq}M$ and $G\xra{\simeq} N$,  $\CC_E(F,G)$ is a semiprojective DG $\Poly$-module. 
\end{proposition}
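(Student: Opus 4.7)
The plan is to verify the hypotheses of Proposition \ref{gpsp} applied with $A = \Poly$ (which has trivial differential), taking $\CC_E(F,G)$ in place of its $M$. Specifically, I will show (i) that $\CC_E(F,G)^\natural$ is a graded projective $\Poly$-module, and (ii) that two of $\bb(\CC_E(F,G))$, $\z(\CC_E(F,G))$, $\h(\CC_E(F,G))$ have finite projective dimension as graded $\Poly$-modules; once both are in hand, the ``moreover'' clause of Proposition \ref{gpsp} yields the conclusion.

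The main preliminary step, and the one most requiring care, is to reduce to the case that $F$ and $G$ are \emph{finite} Koszul resolutions in the sense of \ref{Koszulres}. Since $M, N$ lie in $\D^f(E)$ and $E^\natural$ is the exterior algebra on $\xi_1,\ldots,\xi_n$ --- a bounded complex of finitely generated free $Q$-modules --- $\h(M)$ and $\h(N)$ are bounded and finitely generated as graded $Q$-modules. Because $E$ is a derived complete intersection, $Q$ is regular, so $M$ and $N$ are objects of $\Thick_{\D(Q)} Q$ when regarded as complexes of $Q$-modules, and by \ref{Koszulres} they admit finite Koszul resolutions. Thus I may assume each $F_i$ and $G_j$ is a finitely generated projective $Q$-module with $F_i = 0 = G_j$ for $|i|,|j| \gg 0$. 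The conceptual subtlety here is that semiprojectivity is not preserved under quasi-isomorphism in general, so one must genuinely extract finite resolutions rather than simply quoting independence of choice at the level of cohomology (\ref{c:polynomialaction}).

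Once finite Koszul resolutions are in place, (i) is immediate: $\Hom_Q(F,G)^\natural$ is bounded in the cohomological direction, and in each component is a finite direct sum of finitely generated projective $Q$-modules of the form $\Hom_Q(F_p,G_q)$, so it is itself a finitely generated graded projective $Q$-module. Tensoring with the free $Q$-module $\Poly$ yields $\CC_E(F,G)^\natural = \Poly \otimes_Q \Hom_Q(F,G)^\natural$ as a finitely generated graded projective $\Poly$-module. For (ii), I note that $\Poly = Q[\chi_1,\ldots,\chi_n]$ is noetherian and inherits regularity from $Q$ by polynomial extension, so every finitely generated graded $\Poly$-module has finite projective dimension. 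Combining (i) with noetherianness, each of $\bb(\CC_E(F,G))$, $\z(\CC_E(F,G))$, and $\h(\CC_E(F,G))$ is a finitely generated graded $\Poly$-module, hence has finite projective dimension over $\Poly$. Applying Proposition \ref{gpsp} concludes the proof.
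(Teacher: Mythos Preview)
Your reduction to finite Koszul resolutions is not a valid step. The proposition is stated for \emph{any} pair of Koszul resolutions $F\xra{\simeq}M$ and $G\xra{\simeq}N$, so establishing the conclusion for one particular choice (finite resolutions) does not prove it for the given pair. You yourself note that semiprojectivity is not a quasi-isomorphism invariant --- but that observation cuts exactly against your reduction: it is precisely why one \emph{cannot} replace the given $F,G$ by different (finite) resolutions and expect the conclusion to transfer back. The sentence ``one must genuinely extract finite resolutions rather than simply quoting independence of choice'' draws the wrong moral; extracting different resolutions is the very move that independence-of-choice would be needed to justify, and you have just explained why it is unavailable.

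The paper's proof does not make this reduction. It argues directly with the given $F$ and $G$: since Koszul resolutions are by definition semiprojective over $Q$, the underlying graded module $\CC_E(F,G)^\natural=\Poly\otimes_Q\Hom_Q(F,G)^\natural$ is a free graded $\Poly$-module; since $Q$ is regular, so is $\Poly$, and hence the finite--projective--dimension hypothesis of Proposition~\ref{gpsp} is satisfied. The ``moreover'' clause of Proposition~\ref{gpsp} then gives semiprojectivity. Once your illegitimate reduction is removed, your verifications of (i) and (ii) and the invocation of Proposition~\ref{gpsp} coincide with the paper's argument. Note also that your emphasis on finite generation in (ii) is not needed: regularity of $\Poly$ (with $Q$ regular of finite Krull dimension) already gives finite global dimension, so every graded $\Poly$-module --- finitely generated or not --- has finite projective dimension.
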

\begin{proof}
By assumption $Q$ is regular, and hence, $\Poly$ is  regular. Also, $\CC_E(F,G)^\natural$ is a free graded $\Poly$-module. Thus, by Proposition \ref{gpsp}, we conclude that $\CC_E(F,G)$ is a semiprojective DG $\Poly$-module, as claimed. 
\end{proof}

\begin{remark}
Let $E:=\Kos^Q(f_1,\ldots, f_n)$, where $Q$ is not necessarily a regular  ring, and set $\A:=\Poly\otimes_Q k$. It follows that $\CC_E(F,G)\otimes_Q k$ is a semiprojective DG $\A$-module for  DG $E$-modules $F$ and $G$ such that $F$ and $G$ are graded projective when regarded as DG $Q$-modules. Indeed, since $F$ and $G$ are graded projective, it follows that $\Hom_Q(F,G)$ is a complex of projective $Q$-modules. Thus, $\CC_E(F,G)\otimes_Q k$ is a graded projective DG $\A$-module. Finally, since $\A$ has finite global dimension, the claim follows from Proposition \ref{gpsp}. 
\end{remark}



\subsection{Finite Generation}

In this section, we establish analogs to the theorems of Gulliksen and Avramov-Gasharov-Peeva (see \cite[3.1]{G} and \cite[4.2]{AGP}, respectively). Theorem \ref{Gul} specializes to  the aforementioned results. In the following proposition, 
we give an elementary argument for one implication of Theorem \ref{Gul} that holds provided that $E$ is a derived complete intersection; this is essentially the same  argument from \cite[4.5]{AG} transported to the derived setting.

\begin{proposition}\label{p:fg}
Let $E$ be a derived complete intersection. For each pair of objects $M$ and $N$ of $\D^f(E)$, $\Ext_E^*(M,N)$ is a finitely generated graded $\Poly$-module. 
\end{proposition}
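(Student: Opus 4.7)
The plan is to realize $\Ext_E^*(M,N)$ as the cohomology of the DG $\Poly$-module $\CC_E(F,G)$ from Construction \ref{c:dgsmod}, show that its underlying graded $\Poly$-module is finitely generated, and conclude by Noetherianity of $\Poly$.

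First, I would verify that $M$ and $N$ admit \emph{finite} Koszul resolutions in the sense of \ref{Koszulres}. Because $\h(E)$ is concentrated in the finitely many degrees $0,\ldots,n$ and is itself finitely generated over $Q$, any finitely generated graded $\h(E)$-module is bounded and finitely generated as a $Q$-module; in particular $\h(M)$ and $\h(N)$ are. Regularity of $Q$ then implies that $M$ and $N$, viewed as complexes of $Q$-modules, lie in $\Thick_{\D(Q)} Q$, so \ref{Koszulres} furnishes Koszul resolutions $F \xra{\simeq} M$ and $G \xra{\simeq} N$ that are bounded with each $F_i,G_i$ finitely generated over $Q$.

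Next, by Proposition \ref{altform} there are identifications of DG $\Poly$-modules $\Hom_E(U_E(F), N) \cong \CC_E(F, N)$ and $\Hom_E(U_E(F), G) \cong \CC_E(F, G)$. Since $U_E(F)$ is semiprojective over $E$ and resolves $M$, the quasi-isomorphism $G \xra{\simeq} N$ yields a quasi-isomorphism $\CC_E(F,G) \xra{\simeq} \CC_E(F,N)$, and hence $\Ext_E^*(M,N) \cong \h(\CC_E(F,G))$ as graded $\Poly$-modules. The underlying graded module $\CC_E(F,G)^\natural = \Poly \otimes_Q \Hom_Q(F,G)^\natural$ is finitely generated over $\Poly$, because boundedness of $F$ and $G$ together with the finite generation of each $F_i, G_i$ over $Q$ forces $\Hom_Q(F,G)^\natural$ to be a finitely generated graded $Q$-module. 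The differential on $\CC_E(F,G)$ is $\Poly$-linear (the $\Poly$-coefficients pass through since $\Poly$ has zero differential and lives in even degrees), so cycles and boundaries are graded $\Poly$-submodules; thus $\Ext_E^*(M,N)$ is a subquotient of a finitely generated module over the Noetherian ring $\Poly$, and so is itself finitely generated.

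The step most prone to misstep is the opening reduction, namely translating the hypothesis $M, N \in \D^f(E)$ into the existence of finite Koszul resolutions over $E$; this is where the bounded-degree structure of $\h(E)$ and the regularity of $Q$ are used in concert. Once this is in hand, the rest is a direct application of the chain-level formula of Proposition \ref{altform} and the Noetherianity of the polynomial ring $\Poly$ (which is the other place where regularity of $Q$ enters).
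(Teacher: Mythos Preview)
Your proof is correct and follows essentially the same route as the paper's: obtain finite Koszul resolutions $F\xra{\simeq}M$ and $G\xra{\simeq}N$ from regularity of $Q$, identify $\Ext_E^*(M,N)\cong\h(\CC_E(F,G))$ via Proposition~\ref{altform}, and conclude by Noetherianity of $\Poly$ since $\CC_E(F,G)^\natural$ is finitely generated. One small inaccuracy in your closing commentary: the Noetherianity of $\Poly=Q[\chi_1,\ldots,\chi_n]$ follows from $Q$ being Noetherian, not from its regularity; regularity is used only to secure the finite Koszul resolutions.
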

\begin{proof}
By \cite[2.1]{CD2}, since $Q$ is regular there exist  finite Koszul resolutions $P\xra{\simeq} M$ and  $P'\xra{\simeq} N$.
Thus, $\CC_E(P,P')^\natural$ is a noetherian graded $\Poly$-module. Hence, any subquotient of $\CC_E(P,P')$ is a noetherian graded $\Poly$-module. In particular, $$\h(\CC_E(P,P'))\cong \Ext_E^*(M,N)$$ is a noetherian graded $\Poly$-module.
\end{proof}

\begin{theorem}\label{GC}\label{Gul}Let $Q$ be a commutative noetherian ring and $\f=f_1,\ldots, f_{n}$ be a sequence of elements  in $Q$. Define  $E$ to be the Koszul complex on $\f$ and  let $\Poly=Q[\chi_1,\ldots, \chi_{n}]$ where each  $\chi_i$ has  cohomological degree two. 
 For a pair of objects $M$ and $N$  of $ \D^f(E)$,  $\Ext_Q^*(M,N)$ is a finitely generated graded $Q$-module if and only if $\Ext_E^*(M,N)$ is a finitely generated graded $\Poly$-module. 
\end{theorem}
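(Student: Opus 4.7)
The plan is to apply Theorem \ref{dgfg} with $\A = \Poly$ and the homogeneous sequence $\bm{a} = \chi_1,\ldots,\chi_n$ (of positive cohomological degree) to a chain-level model of $\Ext_E^*(M,N)$. First I fix a Koszul resolution $F \xra{\simeq} M$ (provided by \ref{Koszulres}) and consider the DG $\Poly$-module $\CC_E(F,N)$ from Construction \ref{c:dgsmod}. Proposition \ref{altform} supplies an isomorphism of DG $\Poly$-modules $\Hom_E(U_E(F),N) \cong \CC_E(F,N)$, so $\h(\CC_E(F,N)) \cong \Ext_E^*(M,N)$ as graded $\Poly$-modules. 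Since $M, N$ lie in $\D^f(E)$ and $\h(E)$ is bounded, $\Ext_E^*(M,N)$ is bounded below cohomologically, which gives the hypothesis $\h^{\ll 0}(\CC_E(F,N)) = 0$ required by Theorem \ref{dgfg}.

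Next I verify that $\bm{a}=\chi_1,\ldots,\chi_n$ is a regular sequence on the underlying graded module $\CC_E(F,N)^\natural = \Poly\otimes_Q\Hom_Q(F,N)^\natural$. By Construction \ref{c:dgsmod}, each $\chi_i$ acts by multiplication on the polynomial factor, and since $\chi_1,\ldots,\chi_n$ is $Q$-regular on $\Poly$ with $Q \to \Poly$ flat, this sequence remains regular on any extension of scalars $\Poly \otimes_Q X$. By \ref{reg}, this produces a quasi-isomorphism
\[
\kos{\CC_E(F,N)}{\bm{a}} \xra{\simeq} \CC_E(F,N)/(\bm{a})\CC_E(F,N).
\]

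Inspecting the differential in Construction \ref{c:dgsmod}, every summand besides $1 \otimes \del^{\Hom_Q(F,N)}$ carries a factor of some $\chi_i$ and therefore vanishes modulo the ideal $(\bm{a})$. Hence the quotient is isomorphic to $\Hom_Q(F,N)$ as a DG $Q$-module, and because $F$ is semiprojective over $Q$ by virtue of being a Koszul resolution, its homology is precisely $\Ext_Q^*(M,N)$. Applying Theorem \ref{dgfg} now reads: $\Ext_E^*(M,N)$ is finitely generated over $\Poly$ if and only if $\Ext_Q^*(M,N)$ is finitely generated over $\Poly/(\bm{a}) = Q$, which is the desired equivalence. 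The step I anticipate requiring the most care is tracking cohomological signs and degrees through Construction \ref{c:dgsmod} to ensure the quotient really coincides with the honest Hom-complex $\Hom_Q(F,N)$, rather than a twisted variant, and that the regularity verification is not disrupted by the interaction between the polynomial grading and the homological grading inherited from $\Hom_Q(F,N)$.
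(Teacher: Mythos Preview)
Your proof is correct and follows essentially the same route as the paper: apply Theorem~\ref{dgfg} to $\CC_E(F,-)$, use that $\chi_1,\ldots,\chi_n$ is a regular sequence on $\CC_E(F,-)^\natural$ to identify the Koszul object with the quotient, and observe that this quotient is $\Hom_Q(F,-)$. The only difference is that the paper also takes a Koszul resolution $G\xra{\simeq}N$ so that $\CC_E(F,G)^\natural$ is \emph{free} over $\Poly$ (making regularity immediate), whereas you work with $\CC_E(F,N)$ and argue regularity directly from the polynomial structure of $\Poly\otimes_Q(-)$; both are fine.
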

\begin{proof}
Let $F\xra{\simeq} M$ and $G\xra{\simeq} N$ be  Koszul resolutions of $M$ and $N$, respectively.  By Theorem \ref{dgfg}, $\Ext_E^*(M,N)$ is a finitely generated   graded $\Poly$-module if and only if $\kos{\h(\CC_E(F,G)}{ \bm{\chi}})$ is a finitely generated graded $\Poly/(\bm{\chi})=Q$-module where $\bm{\chi}=\chi_1,\ldots, \chi_n.$  

Next, since $F$ and $G$ are free as graded $Q$-modules it follows that $\CC_E(F,G)^\natural$ is a free graded $\Poly$-module. Thus, $\bm{\chi}$ is a regular sequence on $\CC_E(F,G)$. Therefore, we have an isomorphism in $\D(S)$,  $$\kos{\CC_E(F,G)}{\bm{\chi}}\xra{\simeq} \CC_E(F,G)/(\bm{\chi}) \CC_E(F,G)$$ (see \ref{reg}). Finally, we need only observe that $$\CC_E(F,G)/(\bm{\chi}) \CC_E(F,G)\cong \Hom_Q(F,G).\qedhere$$
\end{proof}



\subsection{Functoriality}

\begin{chunk}Throughout this section, $Q$ and $Q'$  will be commutative noetherian rings. Fix $\f=f_1,\ldots, f_n\in Q$ and $\f':=f_1',\ldots, f_m'\in Q'.$ Define $E$ and $E'$ to be the Koszul complexes on $\f$ and $\f'$, respectively. That is, $$E:=Q\langle \xi_1,\ldots, \xi_n|\del \xi_i=f_i\rangle \text{ and }E':=Q'\langle \xi_1',\ldots, \xi_m'|\del \xi_i'=f_i'\rangle.$$  
\end{chunk}

In the following discussion, we use the theory of  $\Gamma$-DG algebras; see \cite[Section 6]{IFR} or \cite[Chapter 1]{GL} for the necessary background. 

	\begin{Discussion}\label{disc}Let $L$ and $L'$ be as in Construction \ref{const1}. Then $L$ is a semifree DG $\Gamma$-extension of $E^e$ presented by $$L\cong E^e\langle y_1,\ldots, y_n|\del y_i=1\otimes \xi_i-\xi_i\otimes 1\rangle.$$  Similarly,  $L'$ is a semifree DG $\Gamma$-extension of $(E')^e$ presented by $$L'\cong (E')^e\langle y_1',\ldots, y_m'|\del y_i'=1\otimes \xi_i'-\xi_i'\otimes 1\rangle.$$

	Suppose that $\vp: Q\to Q'$ is a morphism of rings such that $\vp(\f)\con \f'$. For each $1\leq i \leq n$ there exists $q_{ij}'\in Q'$ such that $$\vp(f_i)=\sum_{j=1}^m q_{ij}'  f_j'.$$ This induces a DG algebra map $\tilde{\vp}: E\to E'$ extending $\vp$ where $$\tilde{\vp}(\xi_i)=\sum_{j=1}^m q_{ij}'  \xi_j'.$$  Therefore, we have a DG algebra map $\psi: E^e\to L'$ given by the composition $$E^e\xra{\tilde{\vp}\otimes \tilde{\vp}} E'\otimes_Q E'\to (E')^e\inca L'.$$
	Observe that 
	 \begin{align*}\psi(\del(y_i))&=\psi(1\otimes \xi_i-\xi_i\otimes 1) \\
	&=1\otimes \sum_{j=1}^m q_{ij}'  \xi_j'-\sum_{j=1}^m q_{ij}'  \xi_j'\otimes 1 \\
	&=\sum_{j=1}^m q_{ij}' (1\otimes \xi_j'-\xi_j'\otimes 1) \\
	&=\sum_{j=1}^m q_{ij}' \del(y_j') \\
	&=\del\left(\sum_{j=1}^m q_{ij}' y_j'\right).
	\end{align*}
	Therefore, there exists a DG $\Gamma$-algebra map $\Psi: L\to L'$ extending $\psi$ such that $$\Psi(y_i)=\sum_{j=1}^m q_{ij}' y_j'.$$ Moreover, $\Psi$ is unique up to homotopy. 

The composition $\chi_i'\Psi: L\to L'$ is a  DG $\Gamma$-algebra map of homological degree -2 that is determined by its image on $\y$. Observe that $$\chi_i' \Psi (y_h)=\chi_i'\sum_{j=1}^m q_{hj}' y_j'=q_{hi}'.$$ Also, $$\sum_{j=1}^nq_{ji}' \Psi( \chi_j y_h)=q_{hi}'.$$ Thus, $$\chi_i' \Psi =\sum_{j=1}^n q_{ji}' \Psi\circ \chi_j .$$

Thus, for the natural transformation $$\Hom(\Psi,-):\Hom_{(E')^e}(L',-)\to \Hom_{E^e}(L,-)$$ we have that  \begin{equation}\label{comp1} \Hom(\Psi,-)\circ \chi_i'=\sum_{j=1}^n \chi_j\circ \Hom(\Psi, -)\circ q_{ji}' .\end{equation} If there exists $q_{ij}\in Q$ such that $\vp(q_{ij})=q_{ij}'$ for each $i,j$ then \begin{equation}\label{comp2} \Hom(\Psi,-)\circ \chi_i'=\sum_{j=1}^n q_{ij}\chi_j\circ \Hom(\Psi, -).\end{equation}
\end{Discussion}
\begin{proposition}\label{compare}
With the notation of Discussion \ref{disc},  let $M$ and $N$ be DG $E$-modules and $M'$ and $N'$ be DG $E'$-modules. Suppose $\alpha: M\to M'$ and $\beta: N'\to N$ are $E$-linear maps. Then $$\Ext_{\tilde{\vp}}^*(\alpha, \beta)\circ \chi_i'=\sum_{j=1}^n \chi_j\circ \Ext_{\tilde{\vp}}^*(\alpha, \beta)\circ q_{ji}'$$ where $\Ext_{\tilde{\vp}}^*(\alpha, \beta)$ is the canonical map $$\Ext_{E'}^*(M',N')\to \Ext_E^*(M,N).$$ Moreover, if there exists $q_{ij}\in Q$ such that $\vp(q_{ij})=q_{ij}'$ for each $i,j$ then  $$ \Ext_{\tilde{\vp}}^*(\alpha, \beta)\circ \chi_i'=\sum_{j=1}^n q_{ij}\chi_j\circ \Ext_{\tilde{\vp}}^*(\alpha, \beta).$$ 
\end{proposition}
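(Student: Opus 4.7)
The plan is to realize $\Ext_{\tilde{\vp}}^*(\alpha, \beta)$ at the cochain level and then invoke the identities \eqref{comp1} and \eqref{comp2} established in Discussion \ref{disc}. First, pick Koszul resolutions $F \xra{\simeq} M$ over $E$ and $F' \xra{\simeq} M'$ over $E'$, and lift $\alpha$ along $\tilde{\vp}$ to a DG $E$-linear morphism $\tilde{\alpha} : F \to F'$, where $F'$ is regarded as a DG $E$-module by restriction of scalars along $\tilde{\vp}$. By \ref{univres} and \ref{haction}, the cohomology of
\[
\Hom_{E^e}(L, \Hom_Q(F, N)) \;\cong\; \Hom_E(U_E(F), N)
\]
computes $\Ext_E^*(M, N)$, the analog with primes computes $\Ext_{E'}^*(M', N')$, and the $\chi_i$- (resp. $\chi_i'$-) action on these $\Ext$-modules is realized at the cochain level by the natural $\chi_i$-action on $L$ (resp. $\chi_i'$-action on $L'$) through \eqref{eiso}.

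Next, the pair $(\tilde{\alpha}, \beta)$ induces an $E^e$-linear morphism of complexes
\[
\vartheta : \Hom_{Q'}(F', N') \to \Hom_Q(F, N),
\]
where the source is viewed as an $E^e$-module via $\psi = \tilde{\vp} \otimes \tilde{\vp}$. Using \ref{augiso} together with \ref{univres} to identify resolutions, one checks that $\Ext_{\tilde{\vp}}^*(\alpha, \beta)$ is induced at the cochain level by the composition
\[
\Hom_{(E')^e}(L', \Hom_{Q'}(F', N')) \xra{\Hom(\Psi, -)} \Hom_{E^e}(L, \Hom_{Q'}(F', N')) \xra{\Hom_{E^e}(L, \vartheta)} \Hom_{E^e}(L, \Hom_Q(F, N)).
\]
Since $\vartheta$ is $E^e$-linear, the second factor commutes with multiplication by each $\chi_j$ on $L$. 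Applying \eqref{comp1} to the first factor and passing to cohomology then yields
\[
\Ext_{\tilde{\vp}}^*(\alpha, \beta) \circ \chi_i' = \sum_{j=1}^n \chi_j \circ \Ext_{\tilde{\vp}}^*(\alpha, \beta) \circ q_{ji}',
\]
as desired. For the moreover statement, the hypothesis $\vp(q_{ij}) = q_{ij}'$ is precisely what is needed to invoke \eqref{comp2} in place of \eqref{comp1}, and the same argument delivers the second formula.

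The main technical hurdle I foresee is verifying that $\Hom_{E^e}(L, \vartheta) \circ \Hom(\Psi, -)$ really is a cochain-level representative of $\Ext_{\tilde{\vp}}^*(\alpha, \beta)$. This will require tracking the adjunction isomorphism of \ref{haction} carefully through the base change $\psi$ and exploiting that $\Psi$ is a DG $\Gamma$-algebra lift of $\psi$ compatible with the universal resolutions $U_{E'}(F')$ and $U_E(F)$; once this identification is in place, the chain-level commutation relations \eqref{comp1} and \eqref{comp2} transport directly to the asserted identities on $\Ext$.
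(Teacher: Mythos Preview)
Your proposal is correct and follows essentially the same route as the paper: the paper's proof invokes the Hochschild identification $\Ext_{E^e}^*(E,\RHom_Q(M,N))\cong \Ext_E^*(M,N)$ and then applies \eqref{comp1} and \eqref{comp2}, which is precisely what you unpack by choosing Koszul resolutions, realizing $\Ext_{\tilde{\vp}}^*(\alpha,\beta)$ as $\Hom_{E^e}(L,\vartheta)\circ\Hom(\Psi,-)$ in cohomology, and then transporting the chain-level identities. The ``technical hurdle'' you flag is exactly the content hidden in the paper's one-line appeal to that isomorphism, so your expansion is a faithful elaboration rather than a different argument.
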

\begin{proof}
Recall that $$\Ext_{E^e}^*(E,\RHom_Q(M,N))\cong \Ext_E^*(M,N).$$ This combined with  (\ref{comp1}) and (\ref{comp2}) yield the desired results. 
\end{proof}

\section{Cohomological Support}\label{cs}


\begin{Not}Throughout the entirety of  Section \ref{cs}, we fix the following notation. Let $Q$ be a commutative noetherian ring,  $\f=f_1,\ldots, f_n$  a list of elements in $Q$ and set $E:=\Kos^Q(\f)$.   We let $\Poly:=Q[\chi_1,\ldots, \chi_n]$  be the ring of cohomology operators where each $\chi_i$ has cohomological degree two.  Finally, let $\PS_Q^{n-1}$ denote   $\Proj \Poly$ equipped with the Zariski topology.
\end{Not}



\subsection{Cohomological Support  for   Pairs of  DG $E$-modules}\label{s1}

This section introduces  the theory of support for pairs of DG modules over the Koszul complex $E$. Recall that for each pair of DG $E$-modules $M$ and $N$,  $\Ext_E^*(M,N)$ naturally inherits the structure of a graded $\Poly$-module (see Section \ref{refco}).  That is, $\Ext_E^*(M,N)$ is a DG $\Poly$-module with trivial differential.  We will be importing the support theory from Section \ref{s:coh}, below.

\begin{definition}
Let $M$ and $N$ be DG $E$-modules.  We define the \emph{cohomological support of $M$ and $N$ over $E$} to be $$\V_E(M,N):=\gsupp{\Poly}(\Ext_E^*(M,N))\con \PS_Q^{n-1}.$$ The \emph{cohomological support  of $M$ over $E$} is defined to be $\V_E(M):=\V_E(M,M)$. 
\end{definition}

\begin{proposition}\label{bsp}
Let $M$ and $N$ be DG $E$-modules.
\begin{enumerate}
\item $\V_E(\shift^i M,N)=\V_E(M,N)=\V_E(M,\shift^i N)$ for each $i\in \Z$.
\item If $M=M'\oplus M''$ in $\D(E)$ then \begin{enumerate}
\item $\V_E(M,N)=\V_E(M',N)\cup\V_E(M'',N).$
\item $\V_E(N,M)=\V_E(N,M')\cup\V_E(N,M'').$
\end{enumerate}
\item For each exact triangle $M^1\to M^2\to M^3\to$ in $\D^f(E)$, we have \begin{enumerate}\item $\V_E(M^h,N)\con\V_E(M^i, N)\cup\V_E(M^j,N)$ for $\{h,i,j\}=\{1,2,3\}.$ 
\item $\V_E(N,M^h)\con\V_E(N,M^i)\cup\V_E(N,M^j)$ for $\{h,i,j\}=\{1,2,3\}.$
\end{enumerate}
\item If $M$ is an object of $\Thick_{\D(E)}M'$ then $$\V_E(M,N)\con \V_E(M',N)\text{ and }\V_E(N,M)\con \V_E(N,M').$$
\item $\V_E(M,N)\con \V_E(M,M)\cap \V_E(N,N)$.
\item If $\Ext_E^*(M,N)$ is bounded, then $\V_E(M,N)=\emptyset$. \end{enumerate}
\end{proposition}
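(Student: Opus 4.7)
The plan is to translate each assertion into one about the graded $\Poly$-module $\Ext_E^*(M,N)$, viewed as a DG $\Poly$-module with trivial differential, and then invoke the corresponding item of Proposition \ref{gradedsup}. The functoriality of Ext recorded in Proposition \ref{graded} handles all the bookkeeping.

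For (1), the shift isomorphisms in Proposition \ref{graded} show that $\Ext_E^*(\shift^iM,N)$ and $\Ext_E^*(M,\shift^iN)$ are each a shift of $\Ext_E^*(M,N)$ as graded $\Poly$-modules, so Proposition \ref{gradedsup}(1) gives the claim. For (2), the direct sum decomposition of Ext produces a split short exact sequence of graded $\Poly$-modules, and Proposition \ref{gradedsup}(2)(a) delivers the equality of supports. For (3), the four-term sequence of Proposition \ref{graded} assembles into a long exact sequence of graded $\Poly$-modules, which decomposes into short exact sequences whose supports satisfy the required subadditivity by Proposition \ref{gradedsup}(2)(b); the $\V_E(N,-)$ version is symmetric. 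Statement (4) is then a routine induction on the construction of $\Thick_{\D(E)}M'$: shifts, direct summands, and cones are handled by (1)--(3), so the containment propagates through the thick closure.

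The core of the argument is (5). The key input is that Yoneda composition makes $\Ext_E^*(M,N)$ a module over the Yoneda ring $\Ext_E^*(M,M)$ with an action compatible with the $\Poly$-structure (and similarly for $\Ext_E^*(N,N)$ acting on the second entry). Tensoring over $\Poly$ with $\kappa(\p)$, $\Ext_E^*(M,N)\ot_\Poly\kappa(\p)$ becomes a module in $\D(\kappa(\p))$ over the ring object $\Ext_E^*(M,M)\ot_\Poly\kappa(\p)$, with unit acting as the identity. If $\p\notin\V_E(M)$ this ring object is $\simeq 0$, so the identity map on $\Ext_E^*(M,N)\ot_\Poly\kappa(\p)$ factors through $0$; the symmetric argument yields $\V_E(M,N)\subseteq\V_E(N)$. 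Finally, for (6), boundedness of $\Ext_E^*(M,N)$ forces every $\chi_i$ to act locally nilpotently, since $\chi_i^N$ pushes any element outside the bounded degree range. At any $\p\in\Proj\Poly$ some $\chi_i$ lies outside $\p$ (because $\p\not\supseteq\Poly^{>0}$), and local nilpotence makes the localization $\Ext_E^*(M,N)_\p$, hence $\Ext_E^*(M,N)\ot_\Poly\kappa(\p)$, vanish.

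The main technical obstacle I anticipate is keeping the derived notion $\gsupp{\Poly}$ separate from the naive localization support, since the Ext modules are not guaranteed to be finitely generated over $\Poly$ without a derived complete intersection hypothesis. This bears on the clean application of Proposition \ref{gradedsup}(2)(b) in (3), and it motivates phrasing (5) entirely through the derived tensor with $\kappa(\p)$, avoiding any appeal to annihilator ideals.
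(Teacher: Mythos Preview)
Your approach matches the paper's: (1)--(3) are deduced from Propositions \ref{gradedsup} and \ref{graded}, (4) by the thick-subcategory argument equivalent to your induction, (5) from the centrality of the $\Poly$-action with respect to the Yoneda bimodule structure (the paper cites \ref{haction} and leaves the rest implicit), and (6) from a power of $\Poly^{>0}$ annihilating a bounded module. Your concern about invoking Proposition \ref{gradedsup}(2)(b) in (3) without finite generation is well placed but easily dispatched: for any short exact sequence $0\to X'\to X\to X''\to 0$ of graded $\Poly$-modules, applying $-\ot_\Poly\kappa(\p)$ yields an exact triangle, and the two-out-of-three property gives all three containments $\gsupp{\Poly}(X^h)\subseteq\gsupp{\Poly}(X^i)\cup\gsupp{\Poly}(X^j)$ with no finiteness hypothesis.
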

\begin{proof} Parts (1), (2) and (3) are easy consequences of Proposition \ref{gradedsup} and  \ref{graded}. We leave these as exercises to the reader.

For (4), the full subcategory $\T$ of $\D(E)$ consisting of all objects $X$ such that $\V_E(X,N)\con \V_E(M',N)$ is a thick subcategory of $\D(E)$ since (1), (2) and (3) hold. Thus, since $M'$  is an object of $\T$ and $M$ is an object of $\Thick_{\D(E)}M'$ we have that $M\in \T$.  Therefore, this proves the first containment in (4), and the second containment  is similar. 

The $\Poly$-action on $\Ext_E^*(M,N)$ is central by \ref{haction} in the sense that it is compatible with the $\Ext_E^*(N,N)$-$\Ext_E^*(M,M)$ bimodule action. Now (5) is an immediate consequence. 

Finally, for (6), the first part is obvious as a high enough power of $\Poly^{>0}$ annihilates $\Ext_E^*(M,N)$.  \end{proof}

Unsurprisingly, when $\Ext_E^*(M,N)$ is  \emph{finitely generated} this support theory satisfies the following important properties that will be used heavily in the sequel. 
\begin{proposition}\label{refg}
Assume that $M$ and $N$ are objects of $\D^f(E)$ and $\Ext_Q^{\gg 0}(M,N)=0$. 
\begin{enumerate}
\item $\V_E(M,N)$ is a Zariski closed subset of $\PS_Q^{n-1}$.
\item $\V_E(M,N)=\gsupp{\Poly}(\CC_E(F,N))$ for any Koszul resolution $F\xra{\simeq }M$. 
\item $\V_E(M,N)=\emptyset$ if and only if $\Ext_E^{\gg 0}(M,N)=0$.
\end{enumerate}
\end{proposition}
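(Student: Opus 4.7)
The plan is to reduce all three parts to general properties of cohomological support for finitely generated DG $\Poly$-modules established in Section~\ref{s:coh}, by passing through the chain-level model $\CC_E(F,N)$ of $\RHom_E(M,N)$ from Proposition~\ref{altform}. The key input is finite generation of $\Ext_E^*(M,N)$ as a graded $\Poly$-module, which I would obtain from Theorem~\ref{GC}. To verify its hypothesis, note that since $E^\natural$ is a finitely generated free $Q$-module of rank $2^n$, the fact that $M,N\in \D^f(E)$ forces $\h(M)$ and $\h(N)$ to be bounded and finitely generated over $Q$. Hence $M$ and $N$ admit Koszul resolutions by modules that are finitely generated and free over $Q$ in each degree (\ref{Koszulres}), so each $\Ext_Q^i(M,N)$ is finitely generated over the noetherian ring $Q$. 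Combining this with the hypothesis $\Ext_Q^{\gg 0}(M,N)=0$, we conclude that $\Ext_Q^*(M,N)$ is a finitely generated graded $Q$-module, and Theorem~\ref{GC} then yields that $\Ext_E^*(M,N)$ is a finitely generated graded $\Poly$-module.

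For (2), fix a Koszul resolution $F\xra{\simeq} M$. By Proposition~\ref{altform} there is an isomorphism of DG $\Poly$-modules
\[
\Hom_E(U_E(F),N)\cong \CC_E(F,N),
\]
and since $U_E(F)\xra{\simeq} M$ is a semiprojective resolution (\ref{univres}), passing to homology gives an isomorphism of graded $\Poly$-modules $\h(\CC_E(F,N))\cong \Ext_E^*(M,N)$. By the previous paragraph this is finitely generated over $\Poly$, so $\CC_E(F,N)\in \D^f(\Poly)$. Applying \ref{fgsup} to $\CC_E(F,N)$ yields
\[
\gsupp{\Poly}\CC_E(F,N)=\gsupp{\Poly}\h(\CC_E(F,N))=\gsupp{\Poly}\Ext_E^*(M,N)=\V_E(M,N).
\]

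Part~(1) is then immediate from (2): since $\CC_E(F,N)\in \D^f(\Poly)$ and $\Proj \Poly=\PS_Q^{n-1}$, the characterization in~\ref{fgsup} shows $\gsupp{\Poly}\CC_E(F,N)$ is cut out by the annihilator of its homology and is therefore Zariski closed. For part~(3), we apply Proposition~\ref{gradedsup}(3) to the DG $\Poly$-module $\Ext_E^*(M,N)$ (with trivial differential), which is an object of $\D^f(\Poly)$ by the first paragraph; it gives $\V_E(M,N)=\gsupp{\Poly}\Ext_E^*(M,N)=\emptyset$ if and only if $\Ext_E^{\gg 0}(M,N)=0$.

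The main obstacle in this argument is the verification that the hypotheses of Theorem~\ref{GC} are met, since the statement assumes only $\Ext_Q^{\gg 0}(M,N)=0$ rather than the finite generation of $\Ext_Q^*(M,N)$ over $Q$; the rest of the proof is a direct translation between the graded support of a DG $\Poly$-module and the support of its cohomology, once finite generation is available.
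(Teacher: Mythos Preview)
Your argument is correct and follows essentially the same route as the paper: obtain finite generation of $\Ext_E^*(M,N)$ over $\Poly$ via Theorem~\ref{GC}, identify $\h(\CC_E(F,N))\cong \Ext_E^*(M,N)$ using Proposition~\ref{altform} and \ref{univres}, and then invoke \ref{fgsup} and Proposition~\ref{gradedsup}(3). Your explicit verification that $\Ext_Q^*(M,N)$ is finitely generated over $Q$ (not merely eventually zero) is a point the paper leaves implicit, and your citation of \ref{fgsup} for part~(2) is in fact the correct reference where the paper's proof cites Proposition~\ref{gradedsup}(3).
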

\begin{proof}
The assumption that $\Ext_Q^{\gg 0}(M,N)=0$ is equivalent to $\Ext_E^*(M,N)$ being finitely generated over $\Poly$-module using Theorem \ref{Gul}. Hence, $\V_E(M,N)$ is a Zariski closed subset of $\PS_Q^{n-1}$, finishing the proof of (1).  Moreover,   for any Koszul resolution $F\xra{\simeq}M$, $\CC_E(F,N)$ is an object of $\D^f(\Poly)$ where 
\[\h(\CC_E(F,N))=\Ext_E^*(M,N).\]  Thus, by Proposition \ref{gradedsup}(3) 
\[\gsupp{\Poly}(\CC_E(F,N))=\gsupp{\Poly}(\Ext_E^*(M,N))=\V_E(M,N)\] and so (2) holds. 

Finally, for (3), we note the backwards implication is Proposition \ref{bsp}(6). For the converse, since  $\Ext_E^*(M,N)$ is a finitely generated graded $\Poly$-module then applying Proposition \ref{gradedsup}(3) yields the desired result. 
\end{proof}

\begin{example}\label{example1} \ 
\begin{enumerate}
\item Let $M$ be an object of $\Thick_{\D(E)}E$.  For each object $N$ of $\D^f(E)$,   $$\V_E(M,N)=\emptyset.$$ 
Indeed, $\Ext_E^*(E,N)=\h(N)$ and since $N$ is an object of $\D^f(E)$, it follows that $\gsupp{\Poly}(\Ext_E^*(E,N))=\emptyset$. That is, $\V_E(E,N)=\emptyset$ and so applying Proposition \ref{bsp}(4) yields the desired result. 

\item Let $E$ be a minimal derived complete intersection (see \ref{mindci}) with residue field $k$. By \cite[3.2.6]{Pol}, there is an isomorphism of graded $\Poly$-modules $$\Ext_E^*(k,k)\cong \Poly\otimes_Q \bigwedge (\shift^{-1} k^{\dim Q}).$$ In particular, if $\iota: \PS_k^{n-1}\inca \PS_Q^{n-1}$ is the canonical map  induced by the canonical quotient map of $\Poly\to k[\chi_1,\ldots, \chi_n]$, then $$\V_E(k,k)=\iota(\PS_k^{n-1}).$$ That is, $\V_E(k,k)=\gsupp{\Poly}(\Poly\otimes_Qk).$ 
\end{enumerate}
\end{example}

\begin{proposition}\label{dual} 
With the notation and assumptions from \ref{kosgor}, we let  $M$ be an object of  $\D^f(E)$.
\begin{enumerate}
\item $\V_E(M,N)=\emptyset$ for each object $N$ of $\Thick_{\D(E)}E$.
\item $\V_E(M,N)=\V_E(N^\dagger, M^\dagger)$ for each object $N$ of $\D^f(E)$.
\item $\V_E(M)=\V_E(M^\dagger)$.
\end{enumerate}
\end{proposition}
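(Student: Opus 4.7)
The plan is to prove the three parts in order, with (3) following immediately from (2). Throughout I invoke the standing hypotheses of \ref{kosgor}: the base ring $Q$ has finite injective dimension over itself, and $(-)^\dagger$ is an involution on $\D^f(E)$ satisfying the adjunction isomorphism (\ref{goriso}).

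For part (1), I first reduce to the case $N=E$. The second containment of Proposition \ref{bsp}(4) (applied with $M'=E$) gives $\V_E(M,N)\subseteq \V_E(M,E)$ for any $N\in\Thick_{\D(E)}E$. Lemma \ref{derdual} then yields $\RHom_E(M,E)\simeq \shift^n \RHom_Q(M,Q)$, so $\Ext_E^*(M,E)$ agrees with $\Ext_Q^*(M,Q)$ up to shift. Since $\h(E)$ is bounded and finitely generated over $Q$, the assumption that $\h(M)$ is finitely generated over $\h(E)$ forces $\h(M)$ to be bounded and finitely generated over $Q$ as well. Together with the finite injective dimension of $Q$, this makes $\RHom_Q(M,Q)$ bounded, whence $\Ext_E^*(M,E)$ is bounded. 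Proposition \ref{bsp}(6) then gives $\V_E(M,E)=\emptyset$.

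For part (2), the isomorphism (\ref{goriso}) produces $\RHom_E(M,N)\simeq \RHom_E(N^\dagger,M^\dagger)$, which upon taking cohomology yields an isomorphism of graded $Q$-modules $\Ext_E^*(M,N)\cong\Ext_E^*(N^\dagger,M^\dagger)$. The crux is to promote this to an isomorphism of graded $\Poly$-modules. By \ref{haction}, the $\Poly$-action on $\Ext_E^*(-,-)$ factors through the central action of the Hochschild cohomology ring $\HH^*(E|Q)$ on the bifunctor $\RHom_E(-,-)$. Since $(-)^\dagger$ is an exact contravariant endofunctor on $\D^f(E)$ and the duality isomorphism is natural in both arguments, the central element $\chi_i$ acts compatibly on the two sides of the isomorphism. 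Once this $\Poly$-linearity is established, the two graded $\Poly$-modules have equal $\gsupp{\Poly}$, giving $\V_E(M,N)=\V_E(N^\dagger,M^\dagger)$. Part (3) is then immediate from (2) by setting $N=M$: $\V_E(M)=\V_E(M,M)=\V_E(M^\dagger,M^\dagger)=\V_E(M^\dagger)$.

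The main obstacle is the $\Poly$-linearity step in part (2). To make this rigorous, one can work at the chain level: choose Koszul resolutions $F\xra{\simeq}M$ and $G\xra{\simeq}N^\dagger$ and realize the derived isomorphism as one of DG $\Poly$-modules $\Hom_E(U_E(F),N)\simeq \Hom_E(U_E(G),M^\dagger)$ with $\chi_i$-actions coming from the universal resolutions of \ref{univres}. The verification then reduces to tracking the Hochschild cocycle representing $\chi_i$ through the adjunction that defines $(-)^\dagger$, in the spirit of Proposition \ref{compare}.
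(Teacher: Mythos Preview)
Your argument follows the paper's proof closely. For (1) you unpack the boundedness of $\Ext_E^*(M,E)$ via Lemma~\ref{derdual} and the finite injective dimension of $Q$, whereas the paper simply cites \ref{kosgor}(1) (which amounts to the same thing, since $M^\dagger\in\D^f(E)$ forces $\Ext_E^*(M,E)$ to be bounded); the reduction to $N=E$ via Proposition~\ref{bsp}(4) and the appeal to Proposition~\ref{bsp}(6) are identical. For (3) both arguments are the specialization $N=M$.

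The one place you go beyond the paper is in (2): the paper records only the isomorphism $\Ext_E^*(M,N)\cong\Ext_E^*(N^\dagger,M^\dagger)$ coming from (\ref{goriso}) and declares (2) to follow, leaving the $\Poly$-linearity of this isomorphism implicit. You correctly identify this as the nontrivial point and sketch how to obtain it from the centrality of the Hochschild action in \ref{haction} together with the naturality of $(-)^\dagger$. That sketch is sound: since each $\chi_i$ acts through a class in $\HH^*(E\mid Q)$ which operates on $\RHom_E(-,-)$ compatibly via either argument, and since the duality equivalence intertwines the two actions, the induced map on cohomology is $\Poly$-linear. Your proposed chain-level verification via universal resolutions is a reasonable way to make this explicit, though it is not strictly necessary once one accepts the centrality statement in \ref{haction}. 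In short, your proof is correct and essentially the paper's, with additional care taken at exactly the step the paper elides.
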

\begin{proof}
For (1), we note that $\Ext_E^*(M,E)$ is bounded by \ref{kosgor}(1). Hence, by Proposition \ref{bsp}(6), $\V_E(M,E)=\emptyset$. Now applying Proposition \ref{bsp}(4) yields $\V_E(M,N)=\emptyset$, as claimed. Next, (\ref{goriso}) implies that \[\Ext_E^*(M,N)\cong \Ext_E^*(N^\dagger,M^\dagger)\] from which (2) follows from. Finally, (3) is obtained from (2) by letting $N=M$. 
\end{proof}


\subsection{Local Base Ring}\label{s2}

Throughout this section we add the assumption that $Q$ is local with maximal ideal $\n$ and residue field $k$. 
One of the notable features of these cohomological supports is that when $Q$ is local they  detect whether a DG $E$-module is an object of $\Thick_{\D(E)}E$.  \begin{proposition}\label{prop2}
Assume  $M$ is an object of $\D^f(E)$ such that $\Ext_Q^{\gg 0}(M,k)=0$.  Then $\V_E(M,k)=\emptyset$ if and only if $M$ is an object of $\Thick_{\D(E)}E.$ 
\end{proposition}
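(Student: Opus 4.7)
The plan is to combine two ingredients from earlier in the paper: the finiteness criterion translating support-emptiness into eventual vanishing of Ext (Proposition \ref{refg}), and Jørgensen's thick-subcategory criterion recalled in \ref{LJ}. The hypothesis $\Ext_Q^{\gg 0}(M,k)=0$ is exactly what is needed to invoke Proposition \ref{refg}, and the derived complete intersection setup—i.e., $E=\Kos^Q(\f)$ is a non-negatively graded commutative DG $Q$-algebra with $H_i(E)$ finitely generated over $Q$ and $Q\twoheadrightarrow H_0(E)=Q/(\f)$ surjective—is exactly the setting in which \ref{LJ} applies.

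For the forward direction, I would invoke Example \ref{example1}(1) directly: any object of $\Thick_{\D(E)} E$ has empty cohomological support paired against any object of $\D^f(E)$, and $k$ qualifies as such an object (its only nontrivial homology is $k$ itself, which is finitely generated over $H_0(E)=Q/(\f)$).

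For the backward direction, I would first use Proposition \ref{refg}(3): since $\Ext_Q^{\gg 0}(M,k)=0$ implies, via Theorem \ref{Gul}, that $\Ext_E^*(M,k)$ is finitely generated as a graded $\Poly$-module, the emptiness of $\V_E(M,k)=\gsupp{\Poly}\Ext_E^*(M,k)$ is equivalent to $\Ext_E^{\gg 0}(M,k)=0$. Then I would apply Jørgensen's criterion \ref{LJ} to the DG $Q$-algebra $E$, which yields that $\Ext_E^{\gg 0}(M,k)=0$ is equivalent to $M$ belonging to $\Thick_{\D(E)}E$. Chaining the two equivalences gives the desired conclusion.

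The argument is essentially a bookkeeping exercise of citing results already established in the paper, so there is no serious obstacle; the only point requiring a moment of care is verifying that the standing hypotheses of \ref{LJ} (namely that $E$ is non-negatively graded, commutative, with each $H_i(E)$ finitely generated over $Q$, and $Q\to H_0(E)$ surjective) are satisfied by the Koszul complex, which is immediate from its construction in Section \ref{Koszul}.
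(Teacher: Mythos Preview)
Your proposal is correct and follows essentially the same approach as the paper: both chain Proposition \ref{refg}(3) with J{\o}rgensen's criterion \ref{LJ}. The only cosmetic difference is that the paper handles both directions simultaneously via the two equivalences, whereas you split off the implication $M\in\Thick_{\D(E)}E\Rightarrow\V_E(M,k)=\emptyset$ and cite Example \ref{example1}(1) for it---a harmless variation.
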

\begin{proof}
 By  assumption and Proposition \ref{refg}(3) we conclude that  $\V_E(M,k)=\emptyset$ if and only if $\Ext_E^{\gg 0}(M,k)=0.$ Since  $M$ is an object of  $\D^f(E)$ with $\Ext_E^{\gg 0}(M,k)=0$ then by  \ref{LJ}, it follows that  $\Ext_E^{\gg 0}(M,k)=0$ if and only if $M$ is an object of  $\Thick_{\D(E)}E.$ 
\end{proof}

\begin{theorem}\label{cmsv}
 Set  $R:=Q/(\f)$ and  suppose that $\pd_Q R<\infty$. The following are equivalent:
\begin{enumerate}
\item $\f$ is a $Q$-regular sequence.
\item $\V_E(R,k)=\emptyset$
\item $\V_E(M,k)=\emptyset$ for some nonzero  finitely generated $R$-module $M$ such that  $\pd_Q M<\infty$. 
\end{enumerate}
\end{theorem}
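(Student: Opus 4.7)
The plan is to prove the cycle (1) $\Rightarrow$ (2) $\Rightarrow$ (3) $\Rightarrow$ (1). The first two implications are essentially formal; the real content lies in (3) $\Rightarrow$ (1).

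For (1) $\Rightarrow$ (2), I would note that if $\f$ is a $Q$-regular sequence then the canonical augmentation $E \to R$ is a quasi-isomorphism of DG $Q$-algebras, so $R \simeq E$ in $\D(E)$; in particular $R$ belongs to $\Thick_{\D(E)} E$. The residue field $k$ is a cyclic $\h(E)$-module (via $\h(E) \surj \h_0(E) = R \surj k$), hence lies in $\D^f(E)$, so Example \ref{example1}(1) yields $\V_E(R, k) = \emptyset$. The implication (2) $\Rightarrow$ (3) is immediate by taking $M = R$, which is nonzero and satisfies $\pd_Q M = \pd_Q R < \infty$ by hypothesis.

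The heart of the argument is (3) $\Rightarrow$ (1). Let $M$ be a nonzero finitely generated $R$-module with $\pd_Q M < \infty$ and $\V_E(M, k) = \emptyset$. Finite projective dimension over $Q$ gives $\Ext_Q^{\gg 0}(M, k) = 0$, so the hypotheses of Proposition \ref{prop2} are satisfied and we conclude $M \in \Thick_{\D(E)} E$. The Koszul complex $E$ on finitely many elements has finite amplitude, and $M$ is a nontrivial object of this thick closure, so the amplitude inequality \ref{TJ} gives $\amp E \leq \amp M = 0$, the last equality holding because $M$ is concentrated in a single degree. Since $M$ is a nonzero $R$-module we must have $(\f) \neq Q$, so $\h_0(E) = R \neq 0$; combined with $\amp E = 0$ this forces $\h_i(E) = 0$ for all $i > 0$, which in the local setting is equivalent to $\f$ being a $Q$-regular sequence.

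The main conceptual step is recognizing that Proposition \ref{prop2} and the amplitude bound \ref{TJ} together convert the geometric hypothesis $\V_E(M,k) = \emptyset$ into the sharp homological condition $\amp E = 0$. Once this translation is in hand, regularity of $\f$ follows immediately from the standard characterization of Koszul-acyclicity in the local setting; the other verifications (that $R \simeq E$ in the regular case, and that $k, M \in \D^f(E)$) are routine.
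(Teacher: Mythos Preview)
Your proof is correct and follows essentially the same route as the paper's: the paper dispenses with (1)$\Rightarrow$(2) via \ref{augiso} and treats (2)$\Rightarrow$(3) as trivial, then proves (3)$\Rightarrow$(1) by combining Proposition~\ref{refg}(3) with \ref{LJ} (which is exactly what Proposition~\ref{prop2} packages) and the amplitude inequality \ref{TJ}. Your additional care in noting $(\f)\neq Q$ and invoking the local characterization of Koszul-regularity makes explicit what the paper leaves implicit in the line ``$E\xra{\simeq} R$ and hence $\f$ is a $Q$-regular sequence.''
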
\begin{remark}
The equivalence of (1) and (2) was first  established in \cite[3.4(3)]{Pol}.  As noted in \cite[3.4(3)]{Pol}, ``(1) implies (2)" is clear from \ref{augiso} since the augmentation map $E\to R$ is a quasi-isomorphism. Moreover, there is nothing to show for  ``(2) implies (3)" and  so that leaves only the following implication to prove.  
\end{remark}
\begin{proof}[Proof of $(3)\implies (1)$]  Assume $\V_E(M,k)=\emptyset$ where $M$ is as in (3). By Proposition $\ref{refg}(3)$,  $\Ext_E^{\gg0}(M,k)=0$ and so by \ref{LJ},    $M$ is in  $\Thick_{\D(E)}E$. Thus,  \ref{TJ} yields \[\amp E\leq \amp M=0.\] Therefore, $E\xra{\simeq} {R}$ and hence, $\f$ is a $Q$-regular sequence.
\end{proof}

For the rest of the section we set $$\A:=\Poly\otimes_Qk=k[\chi_1,\ldots,\chi_n].$$

\begin{definition}\label{defsupportintro}
Let $M$ and $N$ be  objects of $\D(E)$. We define $$\VV_E(M,N):=\gsupp{\A} (\Ext_E^*(M,N)\otimes_Qk).$$ Set $$\VV_E(M):=\VV_E(M,M).$$
\end{definition}

\begin{chunk} Notice that $\VV_E(M,N)$ is the closed fiber of $\V_E(M,N)$, that is, $$\VV_E(M,N)=\V_E(M,N)\times_{\Spec Q}\Spec k\con \PS_k^{n-1}.$$
Thus, the formulas in Sections \ref{s1}, \ref{s2} and \ref{s3} all hold for $\VV_E(-,-)$ in place of $\V_E(-,-)$. To save space, we do not list the corresponding formulas for $\VV_E(-,-)$.\label{c:v2}
\end{chunk}

\begin{remark}\label{remarkdgi}
 In \cite{Pol}, the present author defined the supports $\VV_E(M,k)$ and provided an application for them. As briefly referenced in the introduction, this theory of supports is to used, in conjunction with Theorem \ref{cmsv}, to establish the following derived category characterization of a complete intersection ring: \emph{a commutative noetherian ring $R$ is locally a complete intersection if and only if every object of $\D^f(R)$ is virtually small}.  The main ingredients are  to show there exist objects $C(1),\ldots, C(n)$ in $\Thick_{\D(R)}k$ such that $$\VV_E(C(1))\cap\ldots  \cap\VV_E(C(n))=\emptyset$$ (cf. \cite[3.3.4]{Pol}), and   if $M$ is virtually small then $\VV_E(R)\subseteq \V_E(M)$. We also point out a similar application has recently appeared in a collaboration of the present author with Briggs and Grifo in \cite{BEJ}.
\end{remark}

\begin{lemma}\label{lt}
Let $E$ be a derived complete intersection and let  $M$ and $N$ be a pair of objects in $\D^f(E)$. For  Koszul resolutions $F\xra{\simeq} M$ and $G\xra{\simeq} N$,  $$\VV_E(M,N)=\gsupp{\A} (\CC_E(F,G)\otimes_Q k).$$ 
\end{lemma}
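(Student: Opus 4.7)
The plan is to show that both sides equal $\V_E(M,N) \cap \PS_k^{n-1}$, where $\PS_k^{n-1} \subseteq \PS_Q^{n-1}$ is the closed subscheme cut out by a regular system of parameters $\x_1,\ldots,\x_d$ of the regular local ring $Q$. My first step would be to identify $\h(\CC_E(F,G))$ with $\Ext_E^*(M,N)$ as graded $\Poly$-modules. Proposition \ref{altform} gives $\Hom_E(U_E(F),N) \cong \CC_E(F,N)$, so $\h(\CC_E(F,N)) = \Ext_E^*(M,N)$. Replacing $N$ by $G$ is legitimate because $\CC_E(F,-)$ preserves quasi-isomorphisms: its underlying graded $\Poly$-module is $\Poly \otimes_Q \Hom_Q(F,-)^\natural$, where $F^\natural$ is projective over $Q$ (being a Koszul resolution) and $\Poly$ is $Q$-flat.

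Next, I would compute $\gsupp{\A}(\CC_E(F,G) \otimes_Q k)$ via a change of base argument. By Proposition \ref{underivetensor}, $\CC_E(F,G)$ is semiprojective over $\Poly$, and since $\A = \Poly \otimes_Q k$ we have $\CC_E(F,G) \otimes_Q k \simeq \CC_E(F,G) \ot_\Poly \A$. For $\p \in \Proj \A$, let $\tilde\p \in \Proj \Poly$ be its preimage under $\Poly \twoheadrightarrow \A$; then $\tilde\p \supseteq (\x_1,\ldots,\x_d)$ and $\kappa_\A(\p) = \kappa_\Poly(\tilde\p)$ as graded fields. Associativity of derived tensor yields
\[ (\CC_E(F,G) \otimes_Q k) \ot_\A \kappa_\A(\p) \simeq \CC_E(F,G) \ot_\Poly \kappa_\Poly(\tilde\p), \]
so $\p \in \gsupp{\A}(\CC_E(F,G) \otimes_Q k)$ if and only if $\tilde\p \in \gsupp{\Poly}(\CC_E(F,G))$. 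Proposition \ref{p:fg} says $\h(\CC_E(F,G)) \cong \Ext_E^*(M,N)$ is finitely generated over $\Poly$, so \ref{fgsup} gives $\gsupp{\Poly}(\CC_E(F,G)) = \V_E(M,N)$.

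For the left-hand side, the same finite generation makes $\Ext_E^*(M,N) \otimes_Q k$ a finitely generated graded $\A$-module, so by \ref{fgsup} the support $\VV_E(M,N)$ is determined by its annihilator. I would apply the graded Nakayama lemma to the homogeneous localization $\Ext_E^*(M,N)_{\tilde\p}$ at the graded-local ring $\Poly_{\tilde\p}$ (using $(\x_1,\ldots,\x_d) \subseteq \tilde\p$) to conclude that $(\Ext_E^*(M,N) \otimes_Q k)_\p = 0$ if and only if $\Ext_E^*(M,N)_{\tilde\p} = 0$. This gives $\VV_E(M,N) = \{\p \in \Proj \A : \tilde\p \in \V_E(M,N)\}$, matching the preceding computation and proving the equality.

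The essential technical input is the semiprojectivity of $\CC_E(F,G)$ over $\Poly$, which is what allows the identification of $\CC_E(F,G) \otimes_Q k$ with the derived tensor product; this is where the regularity hypothesis on $Q$ enters (via Proposition \ref{underivetensor}). The only other subtlety, handled routinely, is the compatibility between homogeneous primes of $\A$ and their preimages in $\Poly$ together with the graded Nakayama step; the potential pitfall of invoking a universal coefficient spectral sequence for $\Tor^\Poly_*(\Ext_E^*(M,N), \A)$ is sidestepped entirely by passing directly to residue fields.
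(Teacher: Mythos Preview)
Your proposal is correct and follows essentially the same approach as the paper: both arguments show that each side equals $\V_E(M,N)\cap \PS_k^{n-1}$, and both hinge on Proposition~\ref{underivetensor} to identify $\CC_E(F,G)\otimes_Q k$ with the derived tensor $\CC_E(F,G)\ot_\Poly \A$. The only cosmetic difference is that the paper invokes the support formulas Proposition~\ref{gradedsup}(4) and \ref{tensor} directly, whereas you unpack those statements by working pointwise at residue fields $\kappa(\mathsf{p})$ and applying graded Nakayama; these are the same argument in different clothing.
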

\begin{proof}
Let $\iota: \PS_k^{n-1}\inca \PS_Q^{n-1}$ be the canonical embedding induced by the projection $\Poly\to \A$. By definition,  $$\iota(\VV_E(M,N))=\gsupp{\Poly}\left(\Ext_E^*(M,N)\otimes_Q k\right).$$ The result follows from  the equalities \begin{align*}
\gsupp{\Poly}\left(\Ext_E^*(M,N)\otimes_Q k\right)&=\gsupp{\Poly} \left(\Ext_E^*(M,N)\otimes_\Poly \A\right) \\
&=\V_E(M,N)\cap \gsupp{\Poly}\A \\
&=\gsupp{\Poly}\CC_E(F,G)\cap \gsupp{\Poly} \A \\
&=\gsupp{\Poly}(\CC_E(F,G)\ot_\Poly \A )\\
&=\gsupp{\Poly}(\CC_E(F,G)\otimes_Q k)
\end{align*} where the second equality is Proposition  \ref{gradedsup}(4), the third equality is by Proposition \ref{refg}(2), the fourth equality is \ref{tensor} and the last equality follows from Proposition \ref{underivetensor}.  \end{proof}

\begin{chunk}\label{c:cx}Let $M$ and $N$ be a pair of objects in $\D^f(E)$. We set $\EE:=\Ext_E^*(M,N)\otimes_Q k$. 
The \emph{complexity of $M$ and $N$ over $E$}, denoted $\cx_E(M,N)$, measures the polynomial growth of the sequence $\{\dim_k \EE^i\}_{i\in \N}$. More precisely, $$\cx_E(M,N):=\inf\{b\in \N: \dim_k\EE^i\leq ai^{b-1}\text{ for some }a>0\text{ and all }i\gg 0\}, $$ and we  define $\cx_EM:=\cx_E(M,k)$. 
\end{chunk}

\begin{chunk}\label{cx}Let $M$ and $N$ be objects of $\D^f(E)$ such that $\Ext_Q^{\gg 0}(M,N)=0$. By assumption $\EE:=\Ext_E^*(M,N)\otimes_Q k$ is a finitely generated graded $\A$-module. Thus,  by the Hilbert-Serre theorem, $\cx_E(M,N)$ is exactly one more than the dimension of $\VV_E(M,N)$ viewed as  a Zariski closed subset of $\PS_k^{n-1}$.  Thus, $\cx_E(M,N)\leq n$. 
\end{chunk}

\begin{example} \
\begin{enumerate}
\item Let $E$ be a minimal derived complete intersection. By Example \ref{example1}(2) and \ref{cx}, it follows that $\cx_Ek=n.$  This is analogous to, and recovers, the statement for ordinary complete intersection rings; namely, the complexity of the residue field is the codimension of the complete intersection.
\item  Theorem \ref{cmsv} can be translated into the following numerical measure of a complete intersection, $Q/(\f)$ is a complete intersection if and only if $\cx_EQ/(\f)=0$. 
\end{enumerate}
 \end{example}
 
 The same proof  as in \cite{AI2}, using Koszul objects, can be adapted to this setting to establish the following result. We leave this to the reader.
 \begin{proposition}
 Let $E$ be a minimal derived complete intersection. For each closed subset $C$ of $\PS_k^{n-1}$, there exists an object $M$ of $\D^f(E)$ such that $\VV_E(M)=C.\qedhere$
 \end{proposition}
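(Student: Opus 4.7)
The plan is to mimic the realizability argument of Avramov--Iyengar in \cite{AI2}, building the desired object $M$ from the residue field $k$ by successively forming Koszul objects against cohomology operators. The starting point is Example \ref{example1}(2): $\VV_E(k) = \PS_k^{n-1}$, which realizes the ``whole space'' and gives the base case.

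First, I would write the closed set as $C = \V(g_1,\ldots,g_r) \subseteq \PS_k^{n-1}$ for some homogeneous $g_i \in \A = k[\chi_1,\ldots,\chi_n]$, and lift each $g_i$ to a homogeneous element $\tilde g_i \in \Poly$. Since $\Poly$ acts centrally on $\Ext_E^*(k,k)$ (see \ref{haction} and \ref{c:polynomialaction}), each $\tilde g_i$ determines a class in $\Ext_E^{|g_i|}(k,k)$, hence a morphism $\tilde g_i\colon k \to \shift^{|g_i|} k$ in $\D(E)$.

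Second, I would inductively define $M_0 := k$ and $M_i := \kos{M_{i-1}}{\tilde g_i}$ as the mapping cone in $\D(E)$, and set $M := M_r$. Each $M_i$ lies in $\D^f(E)$ by the two-out-of-three property applied to the defining triangle, and $\Ext_Q^{\gg 0}(M_i, k) = 0$ is preserved at each step since $Q$ is regular. The key intermediate claim is the support intersection formula
\[
\VV_E(\kos{N}{\tilde g}) \;=\; \VV_E(N) \cap \V(g) \qquad \text{for } N \in \D^f(E),
\]
where $\V(g) \subseteq \PS_k^{n-1}$ is the zero locus of $g$. Iterating this yields $\VV_E(M) = \PS_k^{n-1} \cap \V(g_1) \cap \cdots \cap \V(g_r) = C$.

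The heart of the proof, and the main obstacle, is the intersection formula displayed above. The containment $\VV_E(\kos{N}{\tilde g}) \subseteq \VV_E(N)$ is immediate from the exact triangle $N \xra{\tilde g} \shift^{|g|} N \to \kos{N}{\tilde g} \to$ together with Proposition \ref{bsp}(3). For the delicate direction, one promotes $\tilde g$ to a chain-level endomorphism of a Koszul resolution $F \xra{\simeq} N$ compatible with the $\Poly$-action on $\CC_E(F,F)$; after tensoring with $k$ and invoking Lemma \ref{lt}, the support of $\kos{N}{\tilde g}$ is computed by a mapping cone of DG $\A$-modules on which $g$ acts. At a homogeneous prime $\mathfrak p \not\in \V(g)$, the element $g$ is a unit in $\kappa(\mathfrak p)$, so after localizing the cone becomes contractible, forcing $\mathfrak p \notin \VV_E(\kos{N}{\tilde g})$. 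Conversely, at $\mathfrak p \in \V(g) \cap \VV_E(N)$, one uses the long exact sequence obtained by applying $\Ext_E^*(-, \kos{N}{\tilde g}) \otimes_Q \kappa(\mathfrak p)$ to the defining triangle, noting that $g$ acts as zero modulo $\mathfrak p$, to deduce nonvanishing support. This is precisely the Koszul-object technology of \cite{AI2}, and the regularity of $Q$ (hence of $\Poly$) guarantees that the relevant DG $\Poly$-modules are semiprojective via Proposition \ref{underivetensor}, so the localization arguments behave as expected.
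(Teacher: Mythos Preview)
Your proposal is correct and follows precisely the route the paper indicates: the paper's proof consists solely of the remark that the Koszul-object argument from \cite{AI2} adapts to this setting, with details left to the reader. Your outline is exactly such an adaptation, starting from $\VV_E(k)=\PS_k^{n-1}$ and iteratively cutting down by Koszul objects against lifts of the defining equations of $C$, so there is nothing further to compare.
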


\subsection{Symmetry of Cohomological Support, $\Ext$, $\Tor$ and Complexity}\label{s3}

We now prove some of the major results of the article. The first  relates  the closed subsets $\V_E(M)$, $\V_E(N)$ and $\V_E(M,N)$ of $\mathbb{P}_Q^{n-1}$ when $E$ is a derived complete intersection and contains Theorem \ref{intro1} from the introduction. See the discussion in the introduction, and Remark \ref{m}, for a comparison of the proofs of Theorem \ref{MAIN1} and \cite[Theorem I]{SV} of Avramov and Buchweitz.

\begin{theorem}\label{MAIN1}
Let $E$ be a derived complete intersection  and assume that $M$, $M'$, $N$ and $N'$ are objects of $\D^f(E)$. 
\begin{enumerate}
\item $\V_E(M,N)\cap \V_E(M',N')=\V_E(M,N')\cap \V_E(M',N)$.
\item $\V_E(M,N)=\V_E(M)\cap \V_E(N)=\V_E(N,M).$
\item $\Ext_E^{\gg 0}(M,N)=0$ if and only if $\V_E(M)\cap \V_E(N)=\emptyset$. 
\end{enumerate}
\noindent Furthermore, assume that  $(Q,\n,k)$ is local.
\begin{enumerate}
 \setcounter{enumi}{3}
 \item $\VV_E(M)=\VV_E(M,k)=\VV_E(k,M)$.
\item The following inequalities hold $$\cx_EM+\cx_EN-n\leq \cx_E(M,N)=\cx_E(N,M)\leq \max\{\cx_EM, \cx_E N\}.$$
\end{enumerate}
\end{theorem}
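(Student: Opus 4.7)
The plan is to prove (1) first --- this carries the main technical content --- and deduce (2)--(5) as formal consequences.

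For (1), fix Koszul resolutions $F \xra{\simeq} M$, $F' \xra{\simeq} M'$, $G \xra{\simeq} N$, $G' \xra{\simeq} N'$ via \ref{Koszulres}. Proposition \ref{refg}(2) rewrites each of the four supports appearing in (1) as $\gsupp{\Poly}$ of the corresponding DG $\Poly$-module $\CC_E(-,-)$. Since $Q$ is regular, Proposition \ref{underivetensor} guarantees that each such $\CC_E(-,-)$ is semiprojective over $\Poly$, so derived tensor products over $\Poly$ coincide with ordinary ones. Combined with the product rule from \ref{tensor}, this identifies
\begin{align*}
\V_E(M, N) \cap \V_E(M', N') &= \gsupp{\Poly}\bigl(\CC_E(F, G) \otimes_\Poly \CC_E(F', G')\bigr), \\
\V_E(M, N') \cap \V_E(M', N) &= \gsupp{\Poly}\bigl(\CC_E(F, G') \otimes_\Poly \CC_E(F', G)\bigr).
\end{align*}
It therefore suffices to produce an isomorphism in $\D(\Poly)$ between the two DG $\Poly$-modules on the right-hand sides. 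As underlying graded $\Poly$-modules, both are canonically isomorphic --- via the commutativity of $\otimes_Q$ combined with the perfect-duality identification $\Hom_Q(X, Y) \cong \Hom_Q(X, Q) \otimes_Q Y$, valid because each Koszul resolution is a bounded-below complex of finitely generated projective $Q$-modules. The principal obstacle is to verify that this graded identification can be chosen so as to be a chain map, i.e., to intertwine the cohomology-operator differentials $\sum_i \chi_i \otimes (\Hom(\lambda_i, -) - \Hom(-, \lambda_i))$ appearing in Construction \ref{c:dgsmod}.

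For (2), specializing (1) with $(M', N') = (N, M)$ yields $\V_E(M, N) \cap \V_E(N, M) = \V_E(M) \cap \V_E(N)$; combining with the containments $\V_E(M, N), \V_E(N, M) \subseteq \V_E(M) \cap \V_E(N)$ from Proposition \ref{bsp}(5) forces $\V_E(M, N) = \V_E(N, M) = \V_E(M) \cap \V_E(N)$. Regularity of $Q$ makes $\Ext_Q^{\gg 0}(M, N) = 0$ automatic for $M, N \in \D^f(E)$, so Proposition \ref{refg}(3) gives $\V_E(M, N) = \emptyset$ iff $\Ext_E^{\gg 0}(M, N) = 0$; combined with (2) this is (3). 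For (4), working on the closed fiber via \ref{c:v2}, part (2) gives $\VV_E(M, k) = \VV_E(M) \cap \VV_E(k) = \VV_E(k, M)$, so it suffices to show $\VV_E(M) \subseteq \VV_E(k)$. This reduces via \ref{kosfunc} and the functoriality of Proposition \ref{compare} to the case of a minimal derived complete intersection, where Example \ref{example1}(2) gives $\VV_E(k) = \PS_k^{n-1}$ and the inclusion is immediate.

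For (5), by \ref{cx} together with (4), $\cx_E M = \dim \VV_E(M) + 1$ and $\cx_E(M, N) = \dim(\VV_E(M) \cap \VV_E(N)) + 1$. The symmetry $\cx_E(M, N) = \cx_E(N, M)$ is then immediate from (2). The upper bound $\cx_E(M, N) \leq \max\{\cx_E M, \cx_E N\}$ follows from $\VV_E(M) \cap \VV_E(N) \subseteq \VV_E(M)$ (and likewise $\VV_E(N)$). For the lower bound, invoke the classical projective intersection dimension inequality $\dim(X \cap Y) \geq \dim X + \dim Y - (n - 1)$ for closed subsets $X, Y \subseteq \PS_k^{n-1}$ (with the convention $\dim \emptyset = -\infty$); applied to $X = \VV_E(M)$ and $Y = \VV_E(N)$, this yields $\cx_E(M, N) \geq \cx_E M + \cx_E N - n$.
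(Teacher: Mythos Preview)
Your proposal is correct and follows essentially the same route as the paper: establish (1) via the DG $\Poly$-module isomorphism $\CC_E(F,G)\otimes_\Poly\CC_E(F',G')\cong\CC_E(F,G')\otimes_\Poly\CC_E(F',G)$ and then deduce (2)--(5) formally. One small point: you should take \emph{finite} Koszul resolutions at the outset (available since $Q$ is regular and $M,M',N,N'\in\D^f(E)$, cf.~\ref{Koszulres}), as the duality $\Hom_Q(X,Y)\cong\Hom_Q(X,Q)\otimes_QY$ you invoke requires each component to be finite-rank projective; the paper carries out the chain-map verification you flag by passing through an explicit intermediary $T(F,G;F',G')$ that makes the swap of the $G$ and $G'$ factors transparent.
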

\begin{proof}First, we claim that (2)-(5) all follow from (1). Indeed, 
 \begin{equation}\V_E(M)\cap \V_E(N)=\V_E(M,N)\cap \V_E(N,M)\label{eqsu}\end{equation} by (1). Combining (\ref{eqsu}) with Proposition \ref{bsp}(5) provides us with  $$\V_E(M,N)\con\V_E(M)\cap \V_E(N)=\V_E(M,N)\cap \V_E(N,M)$$ and so $\V_E(M,N)\con \V_E(N,M)$. By symmetry, we obtain $\V_E(N,M)\con \V_E(M,N).$ Thus, $\V_E(M,N)=\V_E(N,M)$ and now   (\ref{eqsu}) finishes the proof that (2) is a consequence of (1). 
Also, (3) follows directly from  (2) and Proposition \ref{refg}(3).  

In the local case, using (2) and \ref{c:v2} the third and fourth equalities below hold \begin{align*}
\VV_E(M)&=\VV_E(M)\cap \PS_k^{n-1} \\
&=\VV_E(M)\cap \VV_E(k) \\
&=\VV_E(M,k)\\
&=\VV_E(k,M).
\end{align*} Hence, (4) holds. Finally, (2) and (4) imply that  $$\VV_E(M,N)=\VV_E(M,k)\cap \VV_E(N,k),$$ and combing this with 
 \ref{cx} shows (5) holds.\footnote{This last deduction is  essentially the same argument from \cite[5.7]{SV} for complexity over a complete intersection ring.}  Hence, as claimed, it  suffices to show (1) holds. 

Now we begin the proof of (1). As $E$ is a derived complete intersection,  there exists finite Koszul resolutions $F\xra{\simeq} M$, $F'\xra{\simeq} M'$, $G\xra{\simeq} N$ and $G'\xra{\simeq} N'$ (c.f. \ref{Koszulres}). First, we claim that there is an isomorphism of DG $\Poly$-modules \begin{equation}\label{importantiso}\CC_E(F,G)\otimes_{\Poly} \CC_E(F',G')\cong\CC_E(F,G')\otimes_{\Poly} \CC_E(F',G).\end{equation}
Indeed,  there is an isomorphism of graded $\Poly$-modules $$(\CC_E(F,G)\otimes_{\Poly} \CC_E(F',G'))^\natural \cong \Poly\otimes_Q (\Hom_Q(F,G)\otimes_Q \Hom_Q (F',G'))^\natural.$$  We let $T(F,G;F',G')$ be the DG $\Poly$-module with $$T(F,G; F',G')^\natural \cong \Poly\otimes_Q (\Hom_Q(F,G)\otimes_Q \Hom_Q (F',G'))^\natural$$ and $$\del^T:=1\otimes \del^{\Hom(F,G)\otimes \Hom(F',G')}+\delta(F,G)+\delta(F',G')$$ where 
$$\delta(F,G):=\sum_{i=1}^n\chi_i \otimes(\Hom(\lambda_i,1)\otimes 1-\Hom(1,\lambda_i)\otimes 1)$$
$$\delta(F',G'):=\sum_{i=1}^n\chi_i\otimes1\otimes (\Hom(\lambda_i,1)- \Hom(1,\lambda_i)).$$ 
Also, we have the following isomorphism of complexes of  $Q$-modules $$
\vp: \Hom_Q(F,G)\otimes_Q \Hom_Q (F',G')\xra{\cong} \Hom_Q(F,G')\otimes_Q \Hom_Q (F',G).$$ Moreover, this induces a DG $\Poly$-module isomorphism $\Psi$ that fits into the following diagram 
$$
\begin{tikzcd}
\CC_E(F,G)\otimes_\Poly\CC_E(F',G')\arrow["\Psi"]{r} \arrow["\cong"]{d}  & \CC_E(F,G')\otimes_{\Poly} \CC_E(F',G) \\
T(F,G;F',G')\arrow["1\otimes \vp"]{r}
\arrow["\cong",swap]{r} & T(F,G'; F,G') \arrow["\cong",swap]{u}
\end{tikzcd}
$$

Next, by Proposition \ref{refg}(2),   $$\V_E(M,N)=\V_E(M,G)=\gsupp{\Poly}\CC_E(F,G).$$ Similarly, \begin{align*}
\V_E(M,N')&=\gsupp{\Poly}\CC_E(F,G') \\
\V_E(M',N)&=\gsupp{\Poly}\CC_E(F',G) \\
\V_E(M',N')&=\gsupp{\Poly}\CC_E(F',G')  \end{align*}
Therefore,\begin{align*}\V_E(M,N)\cap \V_E(M',N')&=\gsupp{\Poly}\CC_E(F,G)\cap \gsupp{\Poly}\CC_E(F',G') \\ 
&=\gsupp{\Poly}\left(\CC_E(F,G)\ot_{\Poly}\CC_E(F',G')\right) \\
&=\gsupp{\Poly}\left(\CC_E(F,G)\otimes_{\Poly}\CC_E(F',G')\right) \end{align*} where the second equality uses \ref{tensor} and the third equality uses Proposition \ref{underivetensor}. 
Similarly, $$\V_E(M,N')\cap \V_E(M',N)=\gsupp{\Poly}\left(\CC_E(F,G')\otimes_{\Poly}\CC_E(F',G)\right)$$
Finally, by (\ref{importantiso}), we get the desired result. 
\end{proof}

\begin{remark}\label{m}
The fact that the eventual vanishing of Ext-modules over a complete intersection is symmetric in the module arguments of Ext has already been established  in each of the following \cite{SV,AI3,BW,HJ}. Theorem \ref{MAIN1} adds a new proof to the collection that incorporates ideas from \cite{SV} and \cite{BW}. Moreover, Theorem \ref{MAIN1} recovers the stronger result, of Avramov and Buchweitz \cite{SV}, that complexity is in fact symmetric in both module arguments over a complete intersection. We end this section with one last result that is directly inspired by \cite[Theorem III]{SV}. The argument for ``(1) implies (3)" in Theorem \ref{MAIN2} differs from the one in \cite[Theorem III]{SV} as we do not resort to a theory of complete resolutions, Tate cohomology, and bands of vanishing over a complete intersection.   
\end{remark}

\begin{theorem}\label{MAIN2}
Let $E$ be a  derived complete intersection. For a pair of objects $M$ and $N$ of $\D^f(E)$, the following are equivalent: 
\begin{enumerate}
\item   $\Ext_E^{\gg 0}(M,N)=0$.
\item   $\Ext_E^{\gg 0}(N,M)=0$. 
 \item    $\Tor^E_{\gg 0}(M,N)=0$. 
\end{enumerate}
\end{theorem}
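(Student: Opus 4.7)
The equivalence $(1) \iff (2)$ is immediate from Theorem~\ref{MAIN1}(3), which characterizes $\Ext_E^{\gg 0}(M,N)=0$ by the symmetric condition $\V_E(M)\cap\V_E(N)=\emptyset$.

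For $(1) \iff (3)$, the plan is to exploit the duality $(-)^\dagger=\RHom_E(-,\omega_E)$ with $\omega_E=\Hom_Q(E,Q)$, which is available on $\D^f(E)$ because $Q$ is regular (the hypotheses of~\ref{kosgor} hold). Since $\V_E(N^\dagger)=\V_E(N)$ by Proposition~\ref{dual}(3), applying Theorem~\ref{MAIN1}(3) to the pair $(M,N^\dagger)$ shows
\[
\Ext_E^{\gg 0}(M,N^\dagger)=0 \iff \V_E(M)\cap\V_E(N)=\emptyset \iff \Ext_E^{\gg 0}(M,N)=0.
\]
Setting $X := M\ot_E N$, tensor-hom adjunction combined with the restriction-of-scalars adjunction (via $Q \to E$) and~\ref{selfdualkoszul} gives
\[
\RHom_E(M,N^\dagger) \simeq \RHom_E(X,\omega_E) \simeq \RHom_Q(X,Q),
\]
so $\Ext_E^i(M,N^\dagger) \cong \Ext_Q^i(X,Q)$ for all $i$. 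Since $\h_i(X)=\Tor^E_i(M,N)$, establishing $(1)\iff(3)$ reduces to proving the equivalence $\Ext_Q^{\gg 0}(X,Q)=0 \iff \h_{\gg 0}(X)=0$.

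This last equivalence I would establish using that $Q$ is regular (hence of finite global dimension) together with the biduality $X^{\dagger\dagger}\simeq X$ provided by Proposition~\ref{p:duality}. The forward direction is routine: when $X$ has bounded homology with each $\h_i(X)$ finitely generated, $X$ admits a bounded $Q$-free resolution, making $\RHom_Q(X,Q)$ bounded. The converse applies the same observation to $X^\dagger \simeq \RHom_Q(X,Q)$ to conclude boundedness of $X^{\dagger\dagger} \simeq X$. The main obstacle I anticipate is verifying that $X$ satisfies the hypotheses of Proposition~\ref{p:duality}, namely that each $\Tor^E_i(M,N)$ is finitely generated over $Q$ and $\h_*(X)$ is bounded below. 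I would confirm this by computing $M\ot_E N$ via the universal resolution $U_E(F)\otimes_E G$, where $F\xra{\simeq} M$ and $G\xra{\simeq} N$ are finite Koszul resolutions (available by~\ref{Koszulres} since $Q$ is regular): the result lies in non-negative homological degrees and is, in each degree, a finite sum of tensor products of finitely generated $Q$-modules, giving the required finiteness for biduality to apply.
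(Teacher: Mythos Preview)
Your proof is correct and follows essentially the same route as the paper: both arguments hinge on the duality $(-)^\dagger$, the equality $\V_E(N^\dagger)=\V_E(N)$ from Proposition~\ref{dual}, the adjunction $(M\ot_E N)^\dagger\simeq\RHom_E(M,N^\dagger)$, and biduality via Proposition~\ref{p:duality} (whose hypotheses you verify more explicitly than the paper does). One harmless slip: for general $M,N\in\D^f(E)$ the complex $U_E(F)\otimes_E G$ is only bounded below, not necessarily concentrated in non-negative degrees, but bounded below is precisely what Proposition~\ref{p:duality} requires, so the argument goes through unchanged.
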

\begin{proof}
The equivalence of (1) and (2) is a consequence of Theorem \ref{MAIN1}(3). We let $(-)^\dagger:=\RHom_E(-,\Hom_Q(E,Q))$.

$(3)\implies (1)$: By assumption $M\ot_E N$ is an object of $\D^f(E)$. Since $Q$ is Gorenstein, then   \ref{kosgor} implies that   $(M\ot_E N)^\dagger$ is an object of  $\D^f(E)$. Using adjunction, $$(M\ot_E N)^\dagger\simeq \RHom_E(M,N^\dagger).$$   Therefore, $\Ext_E^{\gg 0}(M,N^\dagger)=0$ and so   $$\V_E(M)\cap \V_E(N^\dagger)=\emptyset.$$  Thus,  Proposition \ref{dual}(2) yields $$V_E(M)\cap \V_E(N)=\emptyset.$$ That is, $\Ext_E^{\gg 0}(M,N)=0$ (see Theorem \ref{MAIN1}(3)).

$(1)\implies (3)$:  Assume that $\Ext_E^{\gg 0}(M,N)=0$. By   Theorem \ref{MAIN1}(3) and Proposition \ref{dual}(2), $$\V_E(M)\cap \V_E(N^\dagger)=\emptyset.$$  That is, $\RHom_E(M,N^\dagger)$ is an object of $\D^f(E)$. 
Using adjunction, we obtain an isomorphism  $$(M\ot_EN)^\dagger\simeq \RHom_E(M,N^\dagger)$$ and so $(M\ot_EN)^\dagger$ is an object of  $ \D^f(E).$  Since $(M\ot_E N)^\dagger$ is an object of $\D^f(E)$ and again using that $Q$ is Gorenstein, it follows that $(M\ot_EN)^{\dagger\dagger}$ is an object of $\D^f(E)$ (by \ref{kosgor}). Finally, applying Proposition \ref{p:duality}$$M\ot_EN\simeq(M\ot_EN)^{\dagger\dagger},$$  and hence $M\ot_E N$ is an object of  $\D^f(E)$. Therefore, $\Tor_E^{\gg 0}(M,N)=0$ as needed. 
\end{proof}



\section{Cohomological Support for Local Rings}\label{localvar}

\subsection{Definition and  Properties}

\begin{chunk}\label{defdc}
Let $(R,\m,k)$ be a commutative noetherian local ring. We let $\widehat{R}$ be the $\m$-adic completion of $R$. We say that $E=\Kos^Q(\f)$ is a \emph{(minimal) derived complete intersection approximation}, or \emph{(minimal) DCI-approximation}, provided that  $E$ is a (minimal) derived complete intersection with $Q$ complete and $\h_0(E)=\widehat{R}$. By the Cohen Structure Theorem, DCI-approximations of $R$ exist and are unique up quasi-isomorphism (see \ref{kosfunc}).  Moreover, if $E=\Kos^Q(f_1,\ldots,f_n)$ is a minimal DCI-approximation of $R$ then $n$ is independent of choice of $Q$ and $\f$; namely, $n=\dim_k \h_1(K^R)$ where $K^R$ is the Koszul complex on a minimal generating set for $\m$.  We call $n$ the \emph{derived codimension} of $R$. 
\end{chunk}

\begin{theorem}\label{wdvar}
Let $(R,\m,k)$ be a local ring. If $E=\Kos^Q(f_1,\ldots, f_n)$ and $E'=\Kos^{Q'}(f_1',\ldots, f_m')$ are DCI approximations of $R$, then there exists a  diagram of schemes $$\PS_Q^{n-1}\xra{\alpha} \PS_P^N\xra{\beta} \PS_P^N\xla{\alpha'} \PS_{Q'}^{m-1}$$
where  $P$
 is a regular local ring, $\alpha$ and $\alpha'$ are embeddings and $\beta$ is a linear automorphism such that 
 $$\beta(\alpha(\V_E(M.N)))=\alpha'(\V_{E'}(M,N))\text{ in }\PS_{P}^N$$ for any pair of objects $M$ and $N$ in  $\D^f(\widehat{R})$.
\end{theorem}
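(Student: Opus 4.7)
The plan is to use \ref{kosfunc} to pass to a common intermediate derived complete intersection $E''$ dominating both $E$ and $E'$, and then identify both $\V_E(M,N)$ and $\V_{E'}(M,N)$ as closed subsets of $\V_{E''}(M,N)$ via suitable embeddings. By \ref{kosfunc}, there exist a regular local ring $P$, surjections $\pi\colon P \twoheadrightarrow Q$ and $\pi'\colon P \twoheadrightarrow Q'$ with kernels minimally generated by regular sequences $\x$ and $\x'$, lifts $\g, \g'$ of $\f, \f'$ through $\pi, \pi'$, and surjective quasi-isomorphisms $\psi\colon E'' := \Kos^P(\x, \g) \xra{\simeq} E$ and $\psi'\colon \widetilde{E}'' := \Kos^P(\x', \g') \xra{\simeq} E'$, together with a DG $P$-algebra isomorphism $E'' \cong \widetilde{E}''$. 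The two Koszul algebras necessarily have the same number $N+1$ of degree-one generators, and this last isomorphism restricts to a $P$-linear isomorphism on the degree-one parts, dualizing to an isomorphism of the cohomology operator rings $\Poly'' := P[\chi_1,\dots,\chi_{N+1}]$ attached to the two presentations; this induces the linear automorphism $\beta\colon \PS_P^N \to \PS_P^N$. I then define $\alpha\colon \PS_Q^{n-1} \hookrightarrow \PS_P^N$ as the closed immersion cut out by the ideal $(\x, \chi_1^\x, \ldots, \chi_a^\x)$, where $a = \#\x$---equivalently, the composition of the closed immersion $\PS_Q^{n-1} \hookrightarrow \PS_P^{n-1}$ induced by $\pi$ with the linear embedding $\PS_P^{n-1} \hookrightarrow \PS_P^N$ that zeros the $\chi^\x$-coordinates---and $\alpha'$ analogously from $\widetilde{E}''$.

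The heart of the argument is the identification $\alpha(\V_E(M,N)) = \V_{E''}(M,N)$. Since $\psi$ is a quasi-isomorphism, \ref{augiso} yields an isomorphism $\Ext_\psi^*\colon \Ext_E^*(M,N) \xra{\cong} \Ext_{E''}^*(M,N)$ of graded $P$-modules, with $P$ acting on the source through $\pi$; consequently $\x$ annihilates $\Ext_{E''}^*(M,N)$, so $\V_{E''}(M,N) \subseteq V(\x)$. Because $\psi(\xi_j^\g) = \xi_j$ and $\psi(\xi_j^\x) = 0$, Proposition \ref{compare} shows that $\Ext_\psi^*$ intertwines the $\chi_i^E$-action on $\Ext_E^*(M,N)$ with the $\chi_i^\g$-action on $\Ext_{E''}^*(M,N)$. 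It remains to verify that each $\chi_i^\x$ also annihilates $\Ext_{E''}^*(M,N)$ (call this \emph{Claim A}); granting this, $\V_{E''}(M,N) \subseteq V(\x, \chi_1^\x, \ldots, \chi_a^\x) = \alpha(\PS_Q^{n-1})$, and within this subspace the identifications $P \twoheadrightarrow Q$ and $\chi_i^\g \leftrightarrow \chi_i^E$ carry the action of $\Poly''/(\x, \chi^\x)$ on $\Ext_{E''}^*(M,N)$ to the action of $\Poly$ on $\Ext_E^*(M,N)$, giving $\alpha^{-1}(\V_{E''}(M,N)) = \V_E(M,N)$. The identical argument gives $\alpha'(\V_{E'}(M,N)) = \V_{\widetilde{E}''}(M,N)$, while the DG $P$-algebra isomorphism $E'' \cong \widetilde{E}''$ yields $\beta(\V_{E''}(M,N)) = \V_{\widetilde{E}''}(M,N)$; chaining these three equalities proves the theorem.

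I expect Claim A to be the main obstacle. The heuristic is that, via the isomorphism in \ref{haction}, $\chi_i^\x$ corresponds to the Hochschild class arising from the relation $\del \xi_i^\x = x_i$, and since $x_i$ is part of a regular sequence killed by $\psi$, this class should act trivially on any Ext computed from DG $E''$-modules that already factor through $\psi$. To make this rigorous, I would choose Koszul resolutions $F \xra{\simeq} M$ and $G \xra{\simeq} N$ over $E''$ and invoke Proposition \ref{altform} to compute $\Ext_{E''}^*(M,N) = \h(\CC_{E''}(F,G))$; the $\chi_i^\x$-component of the differential on $\CC_{E''}(F,G)$ is $\chi_i^\x \otimes (\Hom(\lambda_i^\x, G) - \Hom(F, \lambda_i^\x))$, where $\lambda_i^\x$ is left multiplication by $\xi_i^\x$. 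A judicious choice of resolution---plausibly of the form $\Kos^P(\x) \otimes_Q F_0$ built from a Koszul resolution $F_0 \xra{\simeq} M$ over $E$, exploiting that $\Kos^P(\x)$ is a $P$-free resolution of $Q$---should ensure that this boundary contribution is surjective enough to force $\chi_i^\x$ to send every cycle to a boundary. The mechanism can be verified explicitly in the small example $E'' = \Kos^{k[x,y]}(y, x^2)$ with $M = N = k$, where a direct computation of $\CC_{E''}(F,k)$ with $F = \Kos^{k[x,y]}(x,y)$ confirms that the operator attached to the regular relation $y$ acts as zero on $\Ext_{E''}^*(k,k) \cong k[\theta]$.
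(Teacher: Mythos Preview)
Your strategy coincides with the paper's: pass via \ref{kosfunc} to a common derived complete intersection $E''=\Kos^P(\x,\g)$ dominating $E$ and $E'$, get $\alpha,\alpha'$ from the surjections of operator rings $P[\chi,\zeta]\to Q[\chi]$ and $P[\chi',\zeta']\to Q'[\chi']$, and get $\beta$ from the DG $P$-algebra isomorphism $E''\cong\widetilde{E}''$. The paper then simply asserts that Proposition~\ref{compare}, applied to the surjective quasi-isomorphism $\e:E''\to E$, gives $\V_{E''}(M,N)=\alpha(\V_E(M,N))$, and likewise for $\alpha'$ and $\beta$.

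You are right to isolate Claim~A as the substantive point here; the paper leaves it implicit. Proposition~\ref{compare}, applied to $\e$, only expresses each \emph{target} operator $\chi_i^E$ in terms of the \emph{source} operators: with $\e(\xi_i)=\xi_i$ and $\e(\tau_j)=0$ one gets $\Ext_\e^*\circ\chi_i^E=\chi_i^{E''}\circ\Ext_\e^*$, and nothing more. The extra source operators $\zeta_j$ (your $\chi_i^{\x}$) are not addressed by that formula, yet one needs them to act nilpotently on $\Ext_{E''}^*(M,N)$ in order to conclude $\V_{E''}(M,N)\subseteq V(\x,\zeta)=\alpha(\PS_Q^{n-1})$. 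So the paper is, at best, compressing this step; your diagnosis is accurate.

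Your proposed resolution $\Kos^P(\x)\otimes_Q F_0$ does not work as written: since $F_0$ is only $Q$-free and $\x$ annihilates $F_0$, the tensor product $\Kos^P(\x)\otimes_P F_0$ is just a direct sum of shifts of $F_0$, hence not $P$-free. A repair is to first lift the finite $Q$-free DG $E$-module $F_0$ to a finite $P$-free graded module $\widetilde{F}_0$ with lifted differential and lifted $\xi_i$-actions (possible since $P\twoheadrightarrow Q$), and then adjoin the $\tau_j$ freely: set $F=\Kos^P(\x)\otimes_P\widetilde{F}_0$ with $\tau_j$ acting by $\tau_j\otimes 1$. This $F$ is $P$-semifree, carries a DG $E''$-module structure, and $F\otimes_P Q\cong F_0$ shows $F\xra{\simeq}M$. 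Computing $\CC_{E''}(F,k)$ as in Remark~\ref{complex}, the $\zeta_j$-part of the differential is multiplication by the dual Koszul differential on the $\tau$-factor, which is injective degreewise; it follows that the $\zeta_j$ kill cohomology, exactly as in your worked example. With Claim~A in hand, the rest of your argument goes through and matches the paper's.
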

\begin{proof}
We write \begin{align*}E&=Q\langle \xi_1,\ldots, \xi_n|\del \xi_i=f_i\rangle,\\
E'&= Q'\langle \xi_1',\ldots, \xi_m'|\del \xi_i'=f_i'\rangle
\end{align*} then by \ref{kosfunc}, there exists surjections of regular local rings $\pi: P\to Q$ and $\pi': P'\to Q'$. Let 
  $$A:=P\langle \xi_1,\ldots \xi_n, \tau_{n+1},\ldots , \tau_{N+1}|\del \xi_i=g_i\text{ and }\del \tau_{n+i}=x_i\rangle$$ where  $\pi(g_i)=f_i$ and $\x$ minimally generates $\ker\pi$. Similarly, we let    $$A':=P\langle \xi_1',\ldots \xi_m', \tau_{m+1}',\ldots , \tau_{N+1}'|\del \xi_i'=g_i'\text{ and }\del \tau_{m+i}'=x_i'\rangle$$ where  $\pi'(g_i')=f_i'$ and $\x'$ minimally generates $\ker\pi'$.

  First,  $\pi$ induces a canonical surjection $$P[\chi_1,\ldots, \chi_n, \zeta_{n+1},\ldots, \zeta_{N+1}]\to Q[\chi_1,\ldots, \chi_n],$$  which defines an embedding $\alpha: \PS_Q^{n-1}\to \PS_P^N$.  
  Let $\e: A\xra{\simeq}E$ be the surjective quasi-isomorphism induced by $\pi$ mapping $\xi_i\mapsto \xi_i$ and $\tau_i\mapsto 0$. Applying Proposition \ref{compare} to $\e$ yields that  $$\V_A(M,N)=\alpha(\V_E(M,N)).$$
  Similarly, $\pi'$ induces a canonical surjection $$P[\chi_1',\ldots, \chi_m', \zeta_{m+1}',\ldots, \zeta_{N+1}']\to Q'[\chi_1',\ldots, \chi_m'],$$  which defines an embedding $\alpha: \PS_Q'^{m-1}\to \PS_P^N$ such that $$\V_{A'}(M,N)=\alpha'(\V_{E'}(M,N)).$$ Finally, since $P$ is local and $\x,\g$ and $\x', \g'$ minimally generate the same ideal in $P$ there is an isomorphism of DG $P$-algebras $\vp: A\xra{\simeq }A'$. By Proposition \ref{compare}, this determines a linear automorphism $\beta:\PS_P^N\to \PS_P^N$ such that $$\beta(\V_A(M,N))=\V_{A'}(M,N).\qedhere$$
   \end{proof}

\begin{chunk}
Let $(R,\m)$ be a commutative noetherian local ring and let $\widehat{R}$ denote its $\m$-adic completion. Let $E:=\Kos^Q(f_1,\ldots,f_n)$ be a minimal DCI-approximation of $R$. For each pair of objects $M$ and $N$ of $\D^f(R)$, $\V_E(M\otimes_R \widehat{R},N\otimes_R \widehat{R})$ is a closed subset of $\mathbb{P}_Q^{n-1}$. For $E':=\Kos^{Q'}(f_1',\ldots, f_n')$ another minimal DCI-approximation of $R$, then $\V_{E'}(M\otimes_R \widehat{R},N\otimes_R \widehat{R})$ is a closed subset of $\mathbb{P}_{Q'}^{n-1}$. By Theorem \ref{wdvar}, $\V_E(M\otimes_R \widehat{R},N\otimes_R \widehat{R})$  and $\V_{E'}(M\otimes_R \widehat{R},N\otimes_R \widehat{R})$ determine the same closed subset of $\PS_P^N$ where $P$ is a regular local ring  and the following is a commutative diagram of surjective ring morphisms $$
\begin{tikzcd}
  & P\arrow{rd} \arrow{ld} &   \\
 Q \arrow{rd}&  & Q' \arrow{ld}\\
 & \widehat{R} &   
\end{tikzcd}$$
\end{chunk}

\begin{remark}
Since $\V_R(-,-)$ is defined in terms of $\V_E(-,-)$ where $E$ is a minimal DCI-approximation of $R$, all of the results regarding the support sets   from Sections \ref{s1}, \ref{s2}, and \ref{s3} hold for $\V_R(-,-),$ as well. Again, to save space, we do not list them all but instead list the main ones in Theorem \ref{thmls}, below. 
\end{remark}

\begin{definition}\label{localdef}
Let $(R,\m)$ be a commutative noetherian local ring and let $\widehat{R}$ denote its $\m$-adic completion. For a pair of objects $M$ and $N$ of $\D^f(R)$, we define the \emph{cohomological support  of the pair $(M,N)$} to be $$\V_R(M,N):=\V_E(M\otimes_R\widehat{R}, N\otimes_R\widehat{R})$$ where $E$ is a minimal DCI-approximation of $R$. We define the \emph{cohomological support  of $M$} to be $\V_R(M):=\V_E(M,M).$ 
\end{definition}

\begin{theorem}\label{thmls}
Let $(R,\m,k)$ be a commutative noetherian local ring. 
\begin{enumerate}
\item $\V_R(M,N)\cap\V_R(M',N')=\V_R(M,N')\cap \V_R(M',N)$ for all objects  $M,M',N,N'$ in $\D^f(R)$.
\item $\V_R(M,N)=\V_R(N,M)$ for all $M,N\in \D^f(R)$.
\item 
The following are equivalent:
\begin{enumerate}
\item $R$ is a complete intersection.
\item $\V_R(R,k)=\emptyset$.
\item $\V_R(R)=\emptyset$. 
\item $\V_R(M,k)=\emptyset$ for some nonzero finitely generated $R$-module $M$.
\item $\V_R(M)=\emptyset$ for some nonzero finitely generated $R$-module $M$. 
\end{enumerate}
\end{enumerate}
\end{theorem}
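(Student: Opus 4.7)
Fix a minimal DCI-approximation $E = \Kos^Q(\f)$ of $R$, so that $Q$ is a complete regular local ring with $\widehat{R} = Q/(\f)$. For any finitely generated $R$-module $M$, the completion $M\otimes_R\widehat{R}$ is a finitely generated $\widehat{R}$-module, hence lies in $\D^f(E)$, and satisfies $\pd_Q(M\otimes_R\widehat{R}) < \infty$ because $Q$ is regular. Also $k\otimes_R\widehat{R} = \widehat{R}/\widehat{\m} = k$, so $\V_R(k) = \V_E(k,k)$. By the very definition of $\V_R(-,-)$, items (1) and (2) are immediate translations of parts (1) and (2) of Theorem \ref{MAIN1} applied to the four (respectively two) completed objects, which lie in $\D^f(E)$; no additional work is required.

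For (3), the plan is to establish the cycle (a) $\implies$ (c) $\implies$ (e) $\implies$ (d) $\implies$ (a), together with the easy implications (c) $\implies$ (b) $\implies$ (d). For (a) $\implies$ (c): if $R$ is a complete intersection then so is $\widehat{R}$, so $\f$ is a $Q$-regular sequence and the augmentation $E \xra{\simeq} \widehat{R}$ is a quasi-isomorphism; hence $\widehat{R}$ lies in $\Thick_{\D(E)} E$, and Example \ref{example1}(1) gives $\V_R(R) = \V_E(\widehat{R},\widehat{R}) = \emptyset$. The implications (c) $\implies$ (e) and (b) $\implies$ (d) are obtained by taking $M = R$, and (c) $\implies$ (b) follows from the containment $\V_R(R,k) \subseteq \V_R(R)\cap \V_R(k)$ provided by Proposition \ref{bsp}(5).

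The main substantive step is (e) $\implies$ (d), which leans on the symmetry formula of Theorem \ref{MAIN1}(2): if $\V_R(M) = \V_E(M\otimes_R\widehat{R}, M\otimes_R\widehat{R}) = \emptyset$, then $\V_E(M\otimes_R\widehat{R}) = \emptyset$, whence
\[
\V_R(M,k) \;=\; \V_E(M\otimes_R\widehat{R},\,k) \;=\; \V_E(M\otimes_R\widehat{R}) \cap \V_E(k) \;=\; \emptyset,
\]
giving (d). Finally, (d) $\implies$ (a) is Theorem \ref{cmsv} $(3 \Rightarrow 1)$ applied to $E$ with the nonzero finitely generated $\widehat{R}$-module $M\otimes_R\widehat{R}$ of finite projective dimension over $Q$: it forces $\f$ to be $Q$-regular, so $\widehat{R}$, and hence $R$, is a complete intersection. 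Everything outside the (e) $\implies$ (d) step is bookkeeping between $R$ and its completion together with direct invocation of Theorems \ref{MAIN1} and \ref{cmsv}; the only ingredient beyond these is the symmetry $\V_E(M,N) = \V_E(M)\cap\V_E(N)$ used to pass from the support of a diagonal pair to the support of a pair involving $k$.
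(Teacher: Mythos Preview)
Your proof is correct and follows essentially the same strategy as the paper: parts (1) and (2) are direct transcriptions of Theorem \ref{MAIN1}, and part (3) reduces to Theorem \ref{cmsv} together with the passage between $\V_E(M)$ and $\V_E(M,k)$ via the identity $\V_E(M,k)=\V_E(M)\cap\V_E(k)$ from Theorem \ref{MAIN1}(2). The only organizational difference is that the paper packages the latter step as a two-sided claim ``$\V_E(M)=\emptyset \Leftrightarrow \V_E(M,k)=\emptyset$'' (using thick subcategory membership for one direction), whereas you weave the implications into a cycle and obtain (c)$\Rightarrow$(b) more cheaply from Proposition \ref{bsp}(5); the substance is identical.
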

\begin{proof}
The statements (1) and (2) follow immediately from Theorem \ref{MAIN1}. For (3), 
let $E=\Kos^Q(f_1,\ldots,f_n)$ is a minimal DCI-approximation of $R$ and let $\Poly$ be $Q[\chi_1,\ldots,\chi_n]$, the ring of cohomology operators for $E$. 
Most of (3) follows from Theorem \ref{cmsv}. However, it remains to prove the following claim.

\noindent\textbf{Claim}: For an object $M$ of $\D^f(E)$,  $\V_E(M)=\emptyset$ if and only if  $\V_E(M,k)=\emptyset$. 

Indeed,  $\V_R(M,k)=\emptyset$ if and only if $M$ is an object of $\Thick_{\D(E)}E$. Thus, $\Ext_E^{\gg 0}(M,M)=0$ and hence Proposition \ref{refg}(3), it follows that $\V_E(M)=\emptyset$. 
 
 Now assume that $\V_E(M)=\emptyset$. By Proposition \ref{refg}(3), $\Ext_E^{\gg 0}(M,M)=0$ and hence,  $$\left(\Ext_E^*(M,M)\otimes_\Poly (\Poly\otimes_Qk)\right)^{\gg 0}=\Ext_E^{\gg 0}(M,M)\otimes_Qk=0.$$ Thus, \begin{align*}\emptyset &=\gsupp{\Poly}(\Ext_E^{*}(M,M)\otimes_\Poly (\Poly\otimes_Qk)) \\
 &=\gsupp{\Poly}(\Ext_E^*(M,M))\cap \gsupp{\Poly}(\Poly\otimes_Q k) \\
 &=\V_E(M)\cap \V_E(k) \\
 &=\V_E(M,k)
 \end{align*}  where  the second equality holds since $\Ext_E^*(M,M)$ and $S\otimes_Q k$ are finitely generated graded $\Poly$-modules, the third equality holds by  Example \ref{example1}(2)  and Theorem \ref{MAIN1}(2) yields the last equality.
 \end{proof}



\subsection{Relation to Other Supports}

Let $Q$ be a commutative noetherian ring and set $E:=\Kos^Q(\f)$ where $\f=f_1,\ldots,f_n$ is a list of elements from $Q$. We set $\Poly:=Q[\chi_1,\ldots,\chi_n]$, the ring of cohomology operators associated to $E$. In this section,  we set $R:=\h_0(E)=Q/(\f)$.

\begin{chunk}\label{regseq}Assume that $\f$ is a $Q$-regular sequence. 
 For each pair of DG $R$-modules, $\Ext_R^*(M,N)$ is a graded $\Poly$-module.  
  By \cite[2.4]{CD2} and \ref{augiso}, we have that the quasi-isomorphism $\e:E\xra{\simeq} R$ induces an isomorphism of graded $\Poly$-modules $$\Ext_{R}^*(M,N)\xra{\Ext_\e(M,N)} \Ext_E^*(M,N).$$ 
 \end{chunk}

\begin{chunk}[Burke-Walker]\label{BW}Assume that $\f$ is a $Q$-regular sequence. 
Let $M$ and $N$ be a  pair of objects from $\D^f(R)$ and assume that $M$ is in $\Thick_{\D(Q)}Q$ when regarded as a complex of $Q$-modules via the canonical projection $Q\to R$. In \cite{BW}, Burke and Walker define the \emph{cohomological support of $(M,N)$}, denoted $\V_Q^\f(M,N)$,  which in the notation of that paper, satisfies $$\Var_Q^\f(M,N)=\gsupp{\Poly\otimes_Q R}\Ext_R^*(M,N).$$ That is, $$\Var_Q^\f(M,N)=\V_E(M,N)\times_{\Spec Q} \Spec R.$$Thus, starting from $\V_E(M,N)$ one can obtain the varieties defined by Avramov and Buchweitz \cite{SV}, Benson, Iyengar and Krause \cite{BIK},  and Stevenson \cite{Stevenson} when $\f$ is a $Q$-regular sequence (see  \cite[8.1]{BW} for more details).
\end{chunk}

\begin{chunk}[D. Jorgensen]\label{relateJ}
Let $(Q,\n,k)$ be local. For simplicity, we  assume that $k$ is algebraically closed. In \cite{Jor}, D. Jorgensen defines the cohomological support for  a pair of finitely generated $R$-modules to be $$\Var({Q},\f;M,N):=\{(a_1,\ldots, a_c)\in \mathbb{A}_{{k}}^n: \Ext_{{Q}_{\bm{a}}}^*({M},{N})\text{ is unbounded}\}\cup\{\bm{0}\}$$ where  $${Q}_{\bm{a}}={Q}/(\tilde{a}_1 f_1+\ldots+ \tilde{a}_nf_n)$$ and $\tilde{a}_i$ is a lifting of $a_i$ to ${Q}$.  By \cite[2.1]{Jor}, if ${Q}$ is a domain and $(\f)\con \n$, then $\Var({Q},\f;M,N)$ is  a well-defined, (Zariski) closed  subset of $\mathbb{A}_{{k}}^n.$  By \cite[2.5]{SV} and \ref{BW}, when $\f$ is a regular sequence and $\Ext_Q^{\gg 0}(M,N)=0$, 
$$\Var({Q},\f;M,N)=\nu^{-1}\max(\VV_E(M,N)))$$
 where $\nu: \mathbb{A}_{{k}}^n\backslash\{\bm{0}\}\to \PS_{{k}}^{n-1}$ is given by $$(a_1,\ldots, a_n)\mapsto (a_i\chi_j-a_j\chi_i).$$  In Proposition \ref{unstable}, below, we establish a more general relation between $\V_E(M,N)$ and $\Var({Q},\f;M,N)$ for finitely generated $R$-modules. 
\end{chunk}

\begin{theorem}
\label{unstable}Assume  $(Q,\n,k)$ is  local  and $(\f)\con\n$ contains a regular element.  Let $\nu: \mathbb{A}_k^n\backslash\{\bm{0}\}\to \PS_k^{n-1}$ given by $$(a_1,\ldots, a_n)\mapsto (a_i\chi_j-a_j\chi_i).$$ 
\begin{enumerate}
\item Let $M$ and $N$  be a pair of objects from  $\D^f(E)$ such that  $\Ext_Q^{\gg 0}(M,N)=0$. For  $\bm{a}=(a_1,\ldots, a_n)\in \mathbb{A}_k^n\backslash\{\bm{0}\}$,  $\nu(\bm{a})\in \VV_E(M,N)$ if and only if  $\Ext_{E_{\bm{a}}}^*(M,N)$ is unbounded where $$E_{\bm{a}}:=\Kos^Q(\tilde{a}_1f_1+\ldots+ \tilde{a}_nf_n)$$ and $\tilde{a}_i$ is a lifting of $a_i$ to $Q$. 
\item Assume that  $k$ is algebraically closed. Then 
 $$\Var({Q},\f;M,N)=\nu^{-1}(\max\VV_E(M,N))$$  for  each pair of objects $M$ and $N$ of $\D^f(R)$ with  $\Ext_Q^{\gg 0}(M,N)=0$. 
 \end{enumerate}
 \end{theorem}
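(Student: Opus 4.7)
The strategy is to pass through the DG $Q$-algebra map $\tilde\vp\colon E_{\bm{a}}\to E$ sending the exterior generator $\eta$ of $E_{\bm{a}}$ (with $\del\eta=\sum\tilde a_if_i$) to $\sum\tilde a_i\xi_i\in E$. Proposition \ref{compare}, applied with $q'_{1i}=\tilde a_i$, gives a graded module homomorphism $\Phi\colon \Ext_E^*(M,N)\to \Ext_{E_{\bm{a}}}^*(M,N)$ which is linear along the surjection of cohomology-operator rings $\Poly\twoheadrightarrow Q[\chi_{\bm{a}}]$, $\chi_i\mapsto \tilde a_i\chi_{\bm{a}}$; reducing modulo $\n$ gives the map $\A\twoheadrightarrow k[\chi_{\bm{a}}]$ whose kernel is exactly the homogeneous prime $\mathfrak{p}_{\bm{a}}\in\A$ corresponding to the point $\nu(\bm{a})$. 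Throughout the proof, the hypothesis $\Ext_Q^{\gg 0}(M,N)=0$ and Theorem \ref{Gul} ensure that $\Ext_E^*(M,N)$ and $\Ext_{E_{\bm{a}}}^*(M,N)$ are finitely generated over $\Poly$ and $Q[\chi_{\bm{a}}]$, respectively.

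For (1), I would first lift $\Phi$ to the chain level: for any Koszul resolution $F\xra{\simeq} M$ (automatically a Koszul resolution over $E_{\bm{a}}$ by restriction), unpacking Construction \ref{c:dgsmod} yields an isomorphism of DG $Q[\chi_{\bm{a}}]$-modules
\[
\CC_E(F,N)\otimes_\Poly Q[\chi_{\bm{a}}]\;\cong\;\CC_{E_{\bm{a}}}(F,N),
\]
because substituting $\chi_i\mapsto\tilde a_i\chi_{\bm{a}}$ in the left-hand differential produces the right-hand differential with $\lambda_{\bm{a}}=\sum\tilde a_i\lambda_i$, the left multiplication by $\tilde\vp(\eta)$. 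Inverting the single degree-two element $\chi_{\bm{a}}$ is exact and commutes with homology, and so one may translate this chain-level identification into an equivalence of asymptotic information. Set $\bar Y:=\Ext_{E_{\bm{a}}}^*(M,N)\otimes_Q k$. Since the $\A$-action on $\bar Y$ factors through $\A/\mathfrak{p}_{\bm{a}}=k[\chi_{\bm{a}}]$, graded Nakayama in the local ring $\A_{\mathfrak{p}_{\bm{a}}}$---whose residue field is $k[\chi_{\bm{a}},\chi_{\bm{a}}^{-1}]$ by \ref{localization}---shows $\bar Y_{\mathfrak{p}_{\bm{a}}}\neq 0$ iff $\chi_{\bm{a}}$ acts non-nilpotently on $\bar Y$, iff $\Ext_{E_{\bm{a}}}^*(M,N)$ is unbounded. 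On the other hand, membership $\nu(\bm{a})\in\VV_E(M,N)$ is equivalent via \ref{fgsup} to $(\Ext_E^*(M,N)\otimes_Q k)_{\mathfrak{p}_{\bm{a}}}\neq 0$, and via the chain-level identification above (after $-\otimes_Q k$ and inverting $\chi_{\bm{a}}$) this in turn is equivalent to $\bar Y[\chi_{\bm{a}}^{-1}]\neq 0$. Chaining the two equivalences yields (1).

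For (2), since $k$ is algebraically closed, the closed points of $\PS_k^{n-1}$ are exactly the image of $\nu$; hence $\nu^{-1}(\max\VV_E(M,N))=\{\bm{a}\neq\0:\nu(\bm{a})\in\VV_E(M,N)\}$, and by (1) this equals $\{\bm{a}\neq\0:\Ext_{E_{\bm{a}}}^*(M,N)\text{ is unbounded}\}$. To match this with $\Var(Q,\f;M,N)\setminus\{\0\}$, the point is to compare $E_{\bm{a}}$ with $Q_{\bm{a}}$. The hypothesis that $(\f)$ contains a $Q$-regular element, together with prime avoidance applied to the associated primes of $Q$, ensures $\sum\tilde a_if_i$ is $Q$-regular for $\bm{a}$ in a dense Zariski open $U\subseteq\AI_k^n$; on $U$ the augmentation $E_{\bm{a}}\xra{\simeq}Q_{\bm{a}}$ is a quasi-isomorphism and \ref{augiso} gives $\Ext_{E_{\bm{a}}}^*(M,N)\cong\Ext_{Q_{\bm{a}}}^*(M,N)$. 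Since $\Var(Q,\f;M,N)$ is Zariski closed by \cite[2.1]{Jor} and $\nu^{-1}(\max\VV_E(M,N))\cup\{\0\}$ is Zariski closed (being the preimage of a closed set under a morphism of varieties), agreement on the dense open $U$ forces equality on the whole of $\AI_k^n$.

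The principal obstacle is the step bridging the chain-level isomorphism and the desired equivalence for the $\A$-support, which must be carried out without the benefit of finite Koszul resolutions (these need not exist since $Q$ is only assumed local). This is tractable precisely because the localization inverts a single graded element $\chi_{\bm{a}}$, hence is exact and commutes with homology, and because $\mathfrak{p}_{\bm{a}}$ already annihilates $\bar Y$, so the comparison ultimately takes place at the single closed point $\mathfrak{p}_{\bm{a}}$ rather than along the full homomorphism $\Poly\to Q[\chi_{\bm{a}}]$, avoiding the bad $\Tor^\Poly(-,Q[\chi_{\bm{a}}])$ contributions that would otherwise appear in a naive spectral-sequence argument.
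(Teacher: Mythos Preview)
Your approach to (1) is essentially the paper's: both work at the chain level with $\CC_E$, obtain the isomorphism $\CC_E(F,N)\otimes_\Poly Q[\chi_{\bm a}]\cong\CC_{E_{\bm a}}(F,N)$, and then pass to supports. The paper first reduces to $\bm a=(1,0,\ldots,0)$ via Proposition~\ref{compare}, whereas you keep general $\bm a$; this is cosmetic. The substantive difference is scope: the paper gives its new chain-level proof \emph{only for $Q$ regular}, citing \cite[2.5]{SV} for the general statement, because the passage from the chain-level identity to the statement about $\VV_E$ goes through Lemma~\ref{lt} and hence Proposition~\ref{underivetensor}, which needs $\CC_E(F,G)$ semiprojective over $\Poly$. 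Your attempt to cover arbitrary local $Q$ has a gap at precisely this step. From the chain isomorphism you get
\[
\h\bigl((\CC_E(F,N)\otimes_Q k)\otimes_\A \kappa(\mathfrak p_{\bm a})\bigr)\;\cong\;\h\bigl(\CC_{E_{\bm a}}(F,N)\otimes_Q k\bigr)[\chi_{\bm a}^{-1}],
\]
but neither side is the object you need: the left is not $(\Ext_E^*(M,N)\otimes_Q k)_{\mathfrak p_{\bm a}}$ and the right is not $\bar Y[\chi_{\bm a}^{-1}]$, since $-\otimes_Q k$ and $-\otimes_\A \A/\mathfrak p_{\bm a}$ need not commute with $\h$. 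Your final paragraph correctly names the $\Tor^\Poly$ obstruction but does not actually remove it; ``the comparison takes place at a single closed point'' does not by itself control these higher $\Tor$'s. If you restrict to $Q$ regular your argument goes through and matches the paper's.

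For (2), your route differs from the paper's and contains a genuine error. The paper asserts $E_{\bm a}\xra{\simeq}Q_{\bm a}$ for \emph{every} $\bm a\neq\0$, whence (1) plus the graded Nullstellensatz immediately give the result. You instead claim only a dense open $U$ where $\sum\tilde a_if_i$ is regular and then invoke a density argument: two Zariski-closed subsets of $\AI_k^n$ agreeing on a dense open coincide. This is false as a general principle (take $\{\0\}$ and a line through $\0$, with $U$ the complement of the line), and restricting to closed cones does not rescue it.
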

 \begin{remark}\
 \begin{enumerate}
 \item 
 The proof of  Theorem \ref{unstable}(1) is essentially the same as the proof  \cite[2.5]{SV}. Instead of importing that proof with the slight, necessary modifications, we offer a new proof of Proposition \ref{unstable}(1) that holds when $Q$ is regular. The difference is that the proof below works at the chain level rather than after taking homology. 
   \item Since $(\f)\con \n$ contains a regular element,  for each $a\in \mathbb{A}_k^{n}\backslash \{\bm{0}\}$, $E_{\bm{a}}\xra{\simeq} Q_{\bm{a}}$.  Hence, for a pair of finitely generated $R$-modules $M$ and $N$,   $\Ext_{E_{\bm{a}}}^*(M,N)$ is unbounded if and only if $\Ext_{Q_{\bm{a}}}^i(M,N)\neq 0$ for infinitely many $i$.
Hence,   Theorem \ref{unstable}(2)  follows from \ref{unstable}(1) and the graded Nullstellensatz. \end{enumerate} 
 \end{remark}
\begin{proof}[Proof of Theorem \ref{unstable}(1) when $Q$ is regular.]

By Proposition \ref{compare}, we can assume that $\bm{a}=(1,0,\ldots, 0)$. Thus, $$\nu(\bm{a})=(\chi_2,\ldots, \chi_n)\in \Proj \A.$$  By assumption and Proposition \ref{refg}(2), $$\V_E(M,N)=\gsupp{\Poly} \CC_E(F,G)$$ where $F\xra{\simeq} M$ and $G\xra{\simeq} N$ are  Koszul resolutions.  
By Lemma \ref{lt},  $$\VV_E(M,N)=\gsupp{\A}(\CC_E(F,G)\otimes_Q k).$$ 
Also, we have isomorphisms of DG $\A$-modules \begin{align*}( \CC_E(F,G)\otimes_Qk )\ot_\A\kappa_\A(\nu(\bm{a})) & =\CC_E(F,G)\otimes_Qk \otimes_\A\kappa_\A(\nu(\bm{a})) \\
&\cong(\CC_E(F,G)\otimes_\Poly \Poly/(\chi_2,\ldots, \chi_n)\otimes_Qk)_{\chi_1} \\
&\cong(\CC_{E_{\bm{a}}}(F,G)\otimes_Q k)_{\chi_1} \end{align*} where first equality holds by Proposition \ref{underivetensor} and  the third equality holds since  $F$ and $G$ are Koszul resolutions of $M$ and $N$, respectively, over $E_{\bm{a}}$. 

 In summary, $\nu(\bm{a})\in \VV_E(M,N)$ if and only if $\CC_{E_{\bm{a}}}(F,G)_{\chi_1}\not\simeq 0$. Since localization is exact, the latter is equivalent to\begin{equation}\Ext_{E_{\bm{a}}}^*(M,N)_{\chi_1}\neq 0.\label{e:9}\end{equation} As $\Ext_{E_{\bm{a}}}^*(M,N)$ is a finitely generated graded $Q[\chi_1]$-module satisfying (\ref{e:9}),  it follows, equivalently, that $\Ext_{E_{\bm{a}}}^*(M,N)$ is unbounded.  Now using the isomorphism $$\Ext_{E_{\bm{a}}}^*(M,N)\cong \Ext_{Q_{\bm{a}}}^*(M,N),$$ we obtain that $\nu(\bm{a})\in \VV_E(M,N)$ if and only if  $\Ext_{Q_{\bm{a}}}^i(M,N)$ is nonzero for infinitely many values of $i$.  
\end{proof}



\subsection{A Study of  $\V(R)$ for Local Rings with Small Codepth}\label{cod}

Let $(R,\m,k)$ be a local commutative noetherian ring. 
In this section we investigate the  \emph{cohomological support  of $R$}; namely, we study   $$\V(R):=\V_R(R,k).$$ By Theorem \ref{thmls}, $\V(R)$ is empty exactly when $R$ is a complete intersection. Hence, of particular is interest is describing $\V(R)$  when $R$ is not a complete intersection. 

For the rest of the section we fix the following notation. 
\begin{Notation}\label{n4}
Let $E:=Q\langle \xi_1,\ldots, \xi_n|\del \xi_i=f_i\rangle$ be a minimal DCI-approximation of $R$. In particular, $Q$ is a regular local ring; we let $\n$ denote the maximal ideal of $Q$ and note that $Q/\n\cong k$.  Fix a minimal free resolution $F\xra{\simeq}\widehat{R}$ over $Q$.  For a DG $E$-module $X$, we let $\lambda_i$ denote left multiplication by $\xi_i$.  Finally, note that  $\V(R)$ is a subset of $\mathbb{P}_k^{n-1}$ where $n$ is the derived codimension of $\widehat{R}$ (c.f. \ref{defdc}).\end{Notation}

\begin{chunk} 
\label{resalg}
Suppose $\pd_Q \widehat{R}\leq 3$. By \cite{BE}, $F$ admits a DG $Q$-algebra structure. As $\h_0(E)=\widehat{R}$, it follows that $F$ inherits a DG $E$-algebra structure. 
By Remark \ref{complex}, $\CC_E(F,k)$ is the following complex of graded $\A$-modules
 \begin{equation}\label{Ares}\ldots\to \shift^{-4}\A\otimes_k \ov{F_2}\xrightarrow{\del_2} \shift^{-2}\A\otimes_k \ov{F_1}\xrightarrow{\del_1} \A\otimes_k\ov{F_0}\to 0\end{equation}  where  $\ov{F_i}=\Hom_Q(F_i,k)$ and $$\del=\sum_{i=1}^n \chi_i\otimes \Hom(\lambda_i, k).$$
\end{chunk}

We define the \emph{codepth of R} to be $$\codepth R:=\dim_k(\m/\m^2)-\depth R.$$ By the Auslander-Buchsbaum formula $\codepth R=\pd_Q \widehat{R}.$
\begin{example}\label{ex3}
Assume $R$ is not a complete intersection, the derived codimension of $R$ is $n$, and $R$ satisfies one of the following conditions:
\begin{enumerate}
\item  $\codepth R=2$, or
\item  $\codepth R=3$ and $R$ is Gorenstein.
\end{enumerate} Then $\V(R)=\mathbb{P}_k^{n-1}$.

 Indeed, in either case $F_1F_1\con \n F_2$ (see \cite[2.1.2]{IFR} and \cite[2.1.3]{IFR}, respectively) where we adopt the notation from \ref{resalg} .  Hence,  
(\ref{Ares}) has the following form $$
0 \to \shift^{-6}\A\otimes_k\ov{F_3}\xra{\del_3} \shift^{-4}\A\otimes_k \ov{F_2}\xrightarrow{0} \shift^{-2}\A\otimes_k \ov{F_1}\xrightarrow{\begin{pmatrix}\chi_1& \ldots & \chi_n\end{pmatrix}} \A\otimes_k\ov{F_0}\to 0.$$ Therefore, in either case, $\shift^{-2}\A$ is a submodule of $\Ext_E^*(\widehat{R},k)$ which justifies the claim. 
\end{example}

More generally, when the codepth of the ring is small, we have a complete description of $\V(R)$.
We quickly recap the DG algebra structure on $F$ when $\codepth R=3$ (see \cite{AKM} for more details). 

\begin{chunk}\label{strcodepth3}
Assume $\codepth R=3$; that is, $\pd_Q \widehat{R}=3$. 
We fix the following notation: \[F_1=\bigoplus_{i=1}^n Q a_i,  \  \ 
F_2=\bigoplus_{i=1}^m Q b_i, \  \ 
F_3=\bigoplus_{i=1}^\ell Q c_i.\]
The  DG $E$-module structure on $F$ is determined by $$\xi_i\cdot x:=a_i\cdot x$$ (see \ref{kosaction}). Therefore,  (\ref{Ares}) is the following complex of graded $\A$-modules:
 \begin{equation}\label{Ares2}
 0\to\shift^{-6} \A\otimes_k \oplus k\gamma_i \xra{\del_3}\shift^{-4}\A\otimes_k\oplus k\beta_i\xrightarrow{\del_2} \shift^{-2}\A\otimes_k \oplus k\alpha_i\xrightarrow{\del_1} \A\otimes_kk\to 0
 \end{equation}
where $$\alpha_i=\Hom(a_i,k), \ \beta_i=\Hom(b_i,k), \ \gamma_i=\Hom(c_i,k)$$ and 
$$\del_1=\begin{pmatrix} \chi_1 & \ldots &\chi_n\end{pmatrix}.$$

In  \cite{AKM}, Avramov, Kustin and Miller showed that there are five classes of algebra structures on $F$ modulo $\n$: {\bf CI}, {\bf TE}, {\bf B}, {\bf G}(r), {\bf H}$(p,q)$. We summarize the relevant results from \cite{AKM} below:  
\begin{enumerate}
\item $R$ belongs to \textbf{CI} if and only if  $R$ is a complete intersection.   
\item When $R$ belongs to  \textbf{G$(r)$}, there exists $r\geq 2$ such that  $a_ib_i=c_1$ modulo $\n$ for all $1\leq i\leq r$ and all other products on $F$ are zero modulo $\n$.

\item If  $R$ belongs to \textbf{TE}, $$a_2a_3=b_1, \ a_3a_1=b_2, \ a_1a_2=b_3$$ and all other products in $F$ are zero modulo $\n$.

\item Assume $R$ is in \textbf{B}. In this case, we have the following equations holding modulo $\n$: $$a_1a_2=b_3, a_1b_1=c_1, a_2b_2=c_1,$$ and all other products of basis elements of $F$ are zero.

\item For $R$  in \textbf{H$(p,q)$},   $p< n$,  $q\leq \ell$, and modulo $\n$   $$a_{p+1}a_i=b_i \text{ for all }1\leq i \leq p, a_{p+1}b_{p+i}=c_i\text{ for all }1\leq i\leq q,$$ and all other products of basis elements of $F$ are zero.
\end{enumerate}
\end{chunk}

\begin{theorem}\label{ethm}
We adopt the assumptions and  notation from \ref{strcodepth3}, and set $\EE:=\Ext_E^*(\widehat{R},k)$ and $\mathcal{Z}:=\ker \del_1.$
\begin{enumerate}
\item  When $R$ belongs to \emph{\textbf{CI}}, $\EE=k$. 

\item When $R$ belongs to \emph{\textbf{G$(r)$}}, $$\EE\cong k\oplus \mathcal{Z}\oplus \shift^{-4} \frac{\oplus_{i=1}^m \A \beta_i}{(\sum_{i=1}^r\chi_i\beta_i)}\oplus \shift^{-6} \A^{\ell-1}.$$

\item When $R$ belongs to \emph{\textbf{TE}}, $$\EE\cong k\oplus \frac{\mathcal{Z}}{\shift^{-2}\A(\chi_j\alpha_i-\chi_i\alpha_j)_{1\leq i<j\leq 3}}\oplus \shift^{-4} \A^{m-2}\oplus \shift^{-6} \A^{\ell}.$$

\item When $R$ belongs to \emph{\textbf{B}}, $$\EE\cong k\oplus\frac{\mathcal{Z}}{ \shift^{-2}\A(\chi_2\alpha_1-\chi_1\alpha_2)}\oplus \shift^{-4} \frac{\oplus_{i=1}^m\A\beta_i}{\A(\chi_1\beta_1+\chi_2\beta_2)}\oplus \shift^{-6} \A^{\ell-1}.$$

\item When $R$ belongs to \emph{\textbf{H$(p,q)$}}, $$\EE\cong k\oplus 
\frac{\mathcal{Z}}{\shift^{-2}\A(\chi_{p+1}\alpha_i-\chi_i\alpha_{p+1})_{1\leq i\leq p}}
\oplus \shift^{-4} \frac{\A^{m-p}}{(\chi_{p+1})}\oplus \shift^{-6} \A^{\ell-q}.$$

\end{enumerate}
\end{theorem}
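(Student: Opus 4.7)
The plan is to compute the cohomology of the complex $\CC_E(F,k)$ displayed in \eqref{Ares2} directly in each of the five AKM classes. The complex has the form
\[
0 \to \shift^{-6}\A\otimes_k \oplus k\gamma_i \xrightarrow{\del_3} \shift^{-4}\A \otimes_k \oplus k\beta_i \xrightarrow{\del_2} \shift^{-2}\A \otimes_k \oplus k\alpha_i \xrightarrow{\del_1} \A \to 0,
\]
where only $\del_2$ and $\del_3$ depend on the class. Reading the defining formula $\del(\beta)=\sum_i \chi_i\otimes \Hom(\lambda_i,k)\beta$ together with $\lambda_i(x)=a_ix$, a product $a_ia_j\equiv b_\ell$ (mod $\n$) contributes the summand $\chi_i\alpha_j$ to $\del_2(\beta_\ell)$, and similarly a product $a_ib_j\equiv c_\ell$ contributes $\chi_i\beta_j$ to $\del_3(\gamma_\ell)$. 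So the multiplication tables recalled in \ref{strcodepth3} translate directly into five explicit pairs $(\del_2,\del_3)$ of matrices over $\A$.

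The \textbf{CI} case is immediate: since $E\xra{\simeq}\widehat R$, one has $\Ext_E^*(\widehat R,k)\cong \Ext_{\widehat R}^*(\widehat R,k)=k$. (Alternatively, in this case $\CC_E(F,k)$ is the Koszul complex on $(\chi_1,\dots,\chi_n)$, which is acyclic apart from $H^0=k$.) For the other four classes I will compute $H^{-2}$, $H^{-4}$, $H^{-6}$ separately and combine them with the universal contribution $H^0=\coker\del_1=\A/(\chi_1,\dots,\chi_n)=k$. Since the piece of the complex of cohomological degree at position $j$ sits in degrees $\geq 2j$, the summands in the statement live in disjoint ranges of cohomological degree and thus assemble into a genuine direct sum decomposition of the graded $\A$-module $\EE$.

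The essential computational input is that $\chi_1,\dots,\chi_n$ is a regular sequence in $\A$; hence syzygies of an expression $\sum_{i\in I}\chi_ie_i$ in a free $\A$-module are controlled by the Koszul complex on $\{\chi_i\}_{i\in I}$. This is what makes every subcomputation tractable. In class \textbf{G}$(r)$, for instance, $\del_2=0$ and $\del_3(\gamma_1)=\sum_{i=1}^r\chi_i\beta_i$, which is a nonzerodivisor on $\oplus\A\beta_i$, yielding $H^{-6}\cong \shift^{-6}\A^{\ell-1}$ and $H^{-4}\cong \shift^{-4}(\oplus\A\beta_i)/(\sum\chi_i\beta_i)$. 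For class \textbf{TE}, $\del_3=0$ while $\del_2$ restricted to the three distinguished $\beta$'s is the first Koszul differential on $(\chi_1,\chi_2,\chi_3)$; its kernel has rank one (generated by the second-stage Koszul relation $\chi_1\beta_1+\chi_2\beta_2+\chi_3\beta_3$), so the level-four contribution is $\shift^{-4}\A^{m-2}$, and the image $\shift^{-2}\A(\chi_j\alpha_i-\chi_i\alpha_j)_{1\leq i<j\leq 3}$ lies inside $\mathcal Z$ and is modded out in $H^{-2}$. Classes \textbf{B} and \textbf{H}$(p,q)$ combine a partial Koszul-type $\del_2$ (supported on a subset of the $\beta_i$'s) with a $\del_3$ that adds a single further relation among the remaining $\beta_i$'s; the same regular-sequence input controls each kernel.

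The main obstacle will be the bookkeeping in classes \textbf{B} and \textbf{H}$(p,q)$: one must carefully identify which $\beta_i$'s are annihilated by $\del_2$ and which $\gamma_j$'s are annihilated by $\del_3$, verify injectivity of the nontrivially supported blocks (which reduces to the nonzerodivisor property of suitable linear forms in $\A$), and confirm the automatic inclusion $\im\del_3\subseteq \ker\del_2$ coming from $\CC_E(F,k)$ being a complex. With this in place, each summand in parts (2)--(5) matches precisely one of the cohomology groups $H^0,H^{-2},H^{-4},H^{-6}$ computed from the explicit $(\del_2,\del_3)$, completing the case analysis.
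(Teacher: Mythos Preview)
Your approach is essentially the same as the paper's: both read off the matrices $\del_2$ and $\del_3$ from the AKM multiplication tables in \ref{strcodepth3} and then compute the position-wise cohomology of the complex \eqref{Ares2}. The paper's proof simply writes down these matrices case by case and leaves the cohomology computation implicit; you supply more detail on that step (e.g.\ identifying the Koszul-type kernels in \textbf{TE} and \textbf{H}$(p,q)$), which is fine.

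One small inaccuracy: your justification that ``the summands in the statement live in disjoint ranges of cohomological degree'' is not correct as stated, since $\shift^{-2}\A^n$, $\shift^{-4}\A^m$, and $\shift^{-6}\A^\ell$ all overlap in high degrees. The direct-sum decomposition $\EE\cong \bigoplus_j \h^j$ holds for a structural reason instead: because $\del^{\Hom_Q(F,k)}=0$, the complex \eqref{Ares2} is a genuine complex of graded $\A$-modules with degree-$0$ differentials (equivalently, the DG module $\CC_E(F,k)$ carries a secondary grading by the $F$-index that the differential respects), so its cohomology splits position by position. This is what the paper's Remark \ref{complex} records. With that correction the argument goes through as you outlined.
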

\begin{proof}
\begin{enumerate}
\item This is immediate. 

For each the following cases, recall that $\EE$ can be calculated as the cohomology of (\ref{Ares2}). Hence, we need only specify $\del_2$ and $\del_3$ in each of the remaining cases.

\item Using \ref{strcodepth3}(2), it follows that $$\chi_i\otimes \Hom(\lambda_i,k)(1\otimes\gamma_1)=\chi_i\otimes \beta_i$$ for all $1\leq i \leq r$ and $$\chi_i\otimes \Hom(\lambda_i,k)(1\otimes \beta_i)=0$$ for all $i$. Thus, $$\del_3=\begin{pmatrix}
\chi_1& 0 & \ldots & 0 \\ 
\vdots & \vdots &  & \vdots \\
\chi_r & 0 & \ldots & 0 \\
0 & 0 & \ldots & 0\\
\vdots & \vdots & & \vdots \\ 
0 & 0 & \ldots & 0
\end{pmatrix} \text{ and } \  \del_2=\textbf{0}_{n\times m}$$ where $\textbf{0}_{n\times m}$ is the $n\times m$ matrix consisting of all zeros.

\item Using \ref{strcodepth3}(3),  we get
\begin{align*}
\chi_2\otimes \Hom(\lambda_2,k)(1 \otimes \beta_1)&=\chi_2\otimes \alpha_3 \\
\chi_3\otimes \Hom(\lambda_3,k)(1 \otimes \beta_1)&=-\chi_3\otimes \alpha_2 \\
\chi_3\otimes \Hom(\lambda_3,k)(1 \otimes \beta_2)&=\chi_3\otimes \alpha_1 \\
\chi_1\otimes \Hom(\lambda_1,k)(1 \otimes \beta_2)&=-\chi_1\otimes \alpha_3 \\
\chi_1\otimes \Hom(\lambda_1,k)(1 \otimes \beta_3)&=\chi_1\otimes \alpha_2 \\
\chi_2\otimes \Hom(\lambda_2,k)(1 \otimes \beta_3)&=-\chi_2\otimes \alpha_1 
\end{align*}
and the  values of  $\chi_i\otimes \Hom(\lambda_i,k)$ on  the remaining $1\otimes \gamma_j$ and $1\otimes \beta_j$  are all zero. 
Hence, $\del_3=\textbf{0}_{m\times \ell}$ and
 $$\del_2=\begin{pmatrix}
0 & \chi_3 & -\chi_2 & 0 & \ldots & 0 \\
-\chi_3 & 0 & \chi_1 & 0 & \ldots & 0 \\
\chi_2 & -\chi_1& 0 & 0 & \ldots & 0\\
0 & 0 & 0& 0& \ldots & 0 \\
\vdots & \vdots & \vdots & \vdots & & 0\\
0 & 0 & 0& 0& \ldots & 0 
\end{pmatrix}.
$$

\item Using \ref{strcodepth3}(4), we get the following:
\begin{align*}
\chi_1\otimes\Hom(\lambda_1,k)(1\otimes \gamma_1)&=\chi_1\otimes\beta_1 \\
   \chi_2 \otimes\Hom(\lambda_2,k)(1\otimes \beta_1)&=\chi_2\otimes \beta_2 \\
 \chi_1\otimes \Hom(\lambda_1,k)(1\otimes \beta_3)&=\chi_1\otimes \alpha_2 \\
\chi_2\otimes \Hom(\lambda_2,k)(1\otimes \beta_3)&=-\chi_2\otimes\alpha_1  
\end{align*} and $\chi_i\otimes \Hom(\lambda_i,k)$ vanishes on all the remaining $1\otimes \gamma_j$ and $1\otimes \beta_j$.  In particular, 
$$\del_3=\begin{pmatrix}  \chi_1 & 0 &\ldots & 0 \\
\chi_2 &  0 & \ldots & 0 \\
0 & 0 & \ldots & 0\\
\vdots & \vdots &  & 0 \\
0 & 0 & \ldots & 0 
\end{pmatrix}\text{ and } \ 
\del_2=\begin{pmatrix} 
 0 & 0 & -\chi_2  & 0 & \ldots & 0\\
 0 & 0 & \chi_1 & 0 & \ldots & 0 \\
 0 & 0 & 0 & 0 & \ldots & 0 \\
 \vdots & \vdots & \vdots & \vdots & & \vdots \\
  0 & 0 & 0 & 0 & \ldots & 0 \\
\end{pmatrix}
$$

\item Using \ref{strcodepth3}(5),  we get 
\begin{align*}
\chi_{p+1}\otimes \Hom(\lambda_{p+1},k)(1\otimes \gamma_i)&=\chi_{p+1}\otimes \beta_{p+i} \\
\chi_j\otimes \Hom(\lambda_{j},k)(1\otimes \beta_j)&=-\chi_{j}\otimes \alpha_{p+1} \\
\chi_{p+1}\otimes \Hom(\lambda_{p+1},k)(1\otimes \beta_j)&=\chi_{p+1}\otimes \alpha_{j} 
\end{align*} for $1\leq i \leq q$ and $1\leq j\leq p$, and the  values of  $\chi_i\otimes \Hom(\lambda_i,k)$ on  the remaining $1\otimes \gamma_j$ and $1\otimes \beta_j$  are all zero.    Hence, 
$$\del_3=\begin{pmatrix} \0_{p\times q} & \0_{p\times (\ell-q)} \\
\chi_{p+1} I_q& \0_{q\times (\ell-q)} \\
\0_{(m-p-q)\times q} & \0_{(m-p-q)\times (\ell-q)}
\end{pmatrix}$$
$$
\del_2=\begin{pmatrix}
\chi_{p+1} I_p& \0_{p\times (m-p)} \\
-\bm{\chi} & \0_{1\times (m-p)} \\
\0_{(n-p-1)\times p} & \0_{(n-p-1)\times (m-p)}
\end{pmatrix}$$ where $-\bm{\chi}=\begin{pmatrix} -\chi_1 & \ldots & -\chi_p\end{pmatrix}$ and $I_t$ denotes the $t\times t$ identity matrix. 
\end{enumerate}
\end{proof}

With the notation set in Notation \ref{n4}, we say that $Q\to \widehat{R}$ admits an embedded deformation if there exists $g_1,\ldots, g_n\in \n^2$ with $g_n$ being regular on $Q/(g_1,\ldots, g_{n-1})$ and $$\widehat{R}\cong Q/(g_1,\ldots, g_n).$$ 
We restate Theorem \ref{clsv1} from the introduction for the ease of the reader. 
\begin{theorem}\label{clsv}
Let $(R,\m,k)$  be a commutative noetherian local ring of  derived codimension $n$ and  $\codepth R\leq 3$. The following characterizes the possible subsets of $\mathbb{P}_k^{n-1}$ that $\V(R)$ can realize: 
\begin{enumerate}
\item If $R$ is a complete intersection, then $\V(R)=\emptyset$.
\item If $R$ is not a complete intersection and  $Q\to\widehat{R}$ admits an embedded deformation, then $\V(R)$ is a hyperplane in $\mathbb{P}_k^{n-1}$. \item Otherwise, $\V(R)=\mathbb{P}_k^{n-1}$.
\end{enumerate}
\end{theorem}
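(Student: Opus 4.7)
The plan is to split on $\codepth R$, with the codepth-$3$ case handled through the Avramov--Kustin--Miller (AKM) classification recalled in \ref{strcodepth3}. Part (1) follows immediately from Theorem \ref{thmls}(3): $R$ is a complete intersection if and only if $\V(R)=\emptyset$.

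For (2) and (3), assume $R$ is not a complete intersection. Since $\codepth R\le 1$ forces the defining ideal in a Cohen presentation to be principal and generated by a regular element, $\codepth R\in\{2,3\}$. In the codepth-$2$ case, Example \ref{ex3}(1) immediately gives $\V(R)=\mathbb{P}_k^{n-1}$; I then rule out embedded deformations by a projective dimension argument: if $\widehat R\cong R'/(g_n)$ with $g_n$ regular on $R'=Q/(g_1,\dots,g_{n-1})$, then $\pd_Q R' = \pd_Q \widehat R - 1 = 1$ (standard formula across a regular element), so $(g_1,\dots,g_{n-1})$ must be principal, forcing $\widehat R$ to be a complete intersection---contradiction. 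Hence $R$ lies in case (3).

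For $\codepth R=3$, $R$ belongs to one of the AKM classes $\mathbf{G}(r)$, $\mathbf{TE}$, $\mathbf{B}$, $\mathbf{H}(p,q)$, and the strategy is to compute $\V(R)=\gsupp{\A}\EE$ with $\EE=\Ext_E^*(\widehat R,k)$ directly from the explicit formulas of Theorem \ref{ethm} via Proposition \ref{gradedsup}. In $\mathbf{G}(r)$, $\mathbf{TE}$, $\mathbf{B}$ the presentation of $\EE$ contains a direct summand of positive $\A$-rank---respectively the Koszul syzygy module $\mathcal{Z}=\ker(\A^n\xrightarrow{\bm{\chi}}\A)$ of rank $n-1$; a free summand $\shift^{-6}\A^{\ell}$ with $\ell\geq 1$ (since $F_3\neq 0$); and $\shift^{-4}(\bigoplus_{i=1}^m\A\beta_i)/\A(\chi_1\beta_1+\chi_2\beta_2)$ of rank $\geq 2$---each of which has full projective support, so $\V(R)=\mathbb{P}_k^{n-1}$. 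In $\mathbf{H}(p,q)$ the summand $\shift^{-6}\A^{\ell-q}$ together with those generators of $\mathcal{Z}$ involving $\alpha_j$ for $j>p+1$ give full support unless $p=n-1$ and $\ell=q$; in this exceptional subcase, the imposed relations $\chi_n\alpha_i=\chi_i\alpha_n$ ($i\leq n-1$) kill all generators of $\mathcal{Z}$ involving $\alpha_n$, and the remaining Koszul second syzygies $\chi_n r_{ij}-\chi_j r_{in}+\chi_i r_{jn}=0$ collapse to $\chi_n r_{ij}=0$, so the quotient of $\mathcal{Z}$ is $\chi_n$-torsion and supported on $V(\chi_n)$; combined with $\shift^{-4}\A^{m-n+1}/(\chi_n)$ this yields $\V(R)=V(\chi_n)$, a hyperplane.

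To finish, I match AKM data with cases (2) and (3) by invoking Proposition \ref{p:ed} (referenced in the introduction), which records that embedded deformations of $\widehat R$ correspond to hyperplane factors of $\V(R)$: full-support classes admit no embedded deformations and sit in case (3), while in the exceptional $\mathbf{H}(n-1,q)$ with $\ell=q$ the distinguished element $a_n$ of the AKM multiplicative structure lifts to a non-zero-divisor modulo the remaining $n-1$ generators of $\ker(Q\to \widehat R)$, producing the embedded deformation and $\V(R)=V(\chi_n)$. The main obstacle will be this last structural identification---extracting the embedded deformation from the multiplicative data $a_n\cdot a_i=b_i$ in $\mathbf{H}(n-1,q)$---which amounts to tracing through the DG $E$-algebra structure on $F$ to exhibit an explicit splitting $\widehat R \cong (Q/(g_n))/(\bar g_1,\dots,\bar g_{n-1})$ with $g_n$ regular on $Q/(g_1,\dots,g_{n-1})$.
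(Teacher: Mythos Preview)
Your overall strategy matches the paper's: split on $\codepth R$, use Example~\ref{ex3} for codepth~$2$, and for codepth~$3$ read off $\V(R)$ class by class from Theorem~\ref{ethm} via the AKM classification. Your support computations are correct; in particular, your rank argument for $\mathbf{H}(p,q)$ with $p<n-1$ (the quotient of $\mathcal{Z}$ has rank $n-1-p\ge 1$, hence full support) is in fact slightly more careful than the paper's blanket assertion of a free $\A$-summand. Your use of Proposition~\ref{p:ed} to conclude that the full-support classes admit no embedded deformation is also legitimate, and it replaces one direction of the external characterization the paper invokes.

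The genuine gap is exactly where you flag it. To establish case~(3) you must show that a ring in class $\mathbf{H}(n-1,\ell)$ \emph{does} admit an embedded deformation; otherwise such a ring would satisfy the hypothesis of~(3) while having $\V(R)$ equal to a hyperplane rather than $\mathbb{P}_k^{n-1}$. Proposition~\ref{p:ed} gives only the implication ``embedded deformation $\Rightarrow$ $\V(R)$ lies in a hyperplane,'' not the correspondence you describe. Your proposed route---lifting $a_n$ to a nonzerodivisor on $Q/(g_1,\ldots,g_{n-1})$ by tracing the DG $E$-structure on $F$---amounts to reproving one direction of a known characterization, and this does not follow directly from the identities $a_na_i=b_i$ holding in $F\otimes_Qk$. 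The paper sidesteps the issue entirely by citing \cite[3.3]{CD3}, which gives precisely the equivalence: a non-complete-intersection of codepth~$3$ admits an embedded deformation if and only if it belongs to $\mathbf{H}(n-1,\ell)$. With that citation your argument is complete; as written, it is not.
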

\begin{proof}  When $R$ is a complete intersection, $\V(R)=\gsupp{\A}(k)=\emptyset.$ Therefore, we assume that $R$ is not a complete intersection for the rest of the proof (and in particular, $\codepth R$ is $2$ or $3$). 
For $R$ a non-complete intersection with $\codepth R=2$ we appeal to Example \ref{ex3}(1). 
Thus, we assume that $R$ is not a  complete intersection and $\codepth R=3$.

As $R$ is not a complete intersection,  $Q\to \widehat{R}$ admits an embedded deformation if and only if $R$ belongs to \textbf{H$(n-1,\ell)$} (c.f. \cite[3.3]{CD3} ).  Notice that when $R$ belongs to \textbf{H$(n-1,\ell)$}, 
$$\Ext_E^*(\widehat{R},k)\cong  k\oplus \shift^{-2} \frac{\A}{(\chi_n)}(\chi_j\alpha_i-\chi_i\alpha_j)_{{1\leq i<j\leq n-1}} \oplus\shift^{-4} \frac{\A^{m-n+1}}{(\chi_{n})}.$$ In particular, $\V(R)=\VV(\chi_n)$ is a hyperplane in $\mathbb{P}_k^{n-1}$. 

For $R$ belonging to {\bf G}$(r)$, {\bf TE}, {\bf B}, or \textbf{H$(p,q)$} for $p\neq n-1$ or $q\neq \ell$,  $\Ext_E^*(\widehat{R},k)$ contains a shift of an $\A$-free summand (see Theorem \ref{ethm}). Hence, $\V(R)=\mathbb{P}_k^{n-1}$ as claimed. 
\end{proof}

\begin{proposition}\label{p:ed}
With the setup in Notation \ref{n4},  if $Q\to \widehat{R}$ admits an embedded deformation then $\V(R)$ is contained in a hyperplane of $\mathbb{P}_k^{n-1}$. 
\end{proposition}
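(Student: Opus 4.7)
The plan is to replace the given DCI-approximation $E$ with one adapted to the embedded deformation, and then to exhibit at the chain level an explicit null-homotopy for the action of the resulting last cohomology operator on $\Ext_{E'}^{*}(\widehat{R},k)$. Since $\ker(Q\to\widehat{R})$ is minimally generated by both $f_1,\ldots,f_n$ and $g_1,\ldots,g_n$, the two generating sets differ by an invertible matrix in $M_n(Q)$; this produces an isomorphism of DG $Q$-algebras $E\cong E':=\Kos^Q(g_1,\ldots,g_n)$, and Proposition \ref{compare} identifies the $\chi_i$'s with an invertible linear change of the cohomology operators of $E'$. It therefore suffices to prove $\V_{E'}(\widehat{R},k)\subseteq V(\chi_n)$, a hyperplane in $\PS_Q^{n-1}$; passing to closed fibers then yields the claim for $\V(R)\subset\PS_k^{n-1}$.

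Set $Q'':=Q/(g_1,\ldots,g_{n-1})$ and $\tilde E:=\Kos^Q(g_1,\ldots,g_{n-1})$, and take a minimal DG $Q$-algebra resolution $G\xra{\simeq}Q''$ (e.g.\ via Tate's construction). By \ref{kosaction}, $G$ becomes a DG $\tilde E$-module. With $K:=\Kos^Q(g_n)$, the fact that $g_n$ is both $Q$-regular and regular on $Q''$ implies that $F:=K\otimes_Q G$ is a minimal $Q$-free resolution of $\widehat{R}$, and the tensor product structure makes it a DG $E'=K\otimes_Q\tilde E$-module. Thus $F$ is a Koszul resolution of $\widehat{R}$ over $E'$ in the sense of \ref{Koszulres}, and by Proposition \ref{altform} together with Remark \ref{complex} we have $\Ext_{E'}^{*}(\widehat{R},k)=H^{*}(\CC_{E'}(F,k))$. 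Minimality of $F$ forces $\del^{\Hom_Q(F,k)}=0$, so the differential on $\CC_{E'}(F,k)$ reduces to $\sum_{i=1}^{n}\chi_i\otimes\Hom_Q(\lambda_i,k)$.

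The heart of the argument is a chain-level null-homotopy for $\chi_n\cdot\id$. Write $\bar F:=F\otimes_Q k = \bar G\oplus\eta\bar G$, where $\eta$ is the degree-one generator of $\bar K:=K\otimes_Q k$, and define the odd operator $t\colon\bar F\to\bar F$ of homological degree $-1$ by $t(g)=0$ and $t(\eta g)=g$. Because $\xi_n$ acts on $\bar F$ only through the $\bar K$-factor while the remaining $\xi_j$ act only through $\bar G$, and using the Koszul sign $\xi_j\xi_n=-\xi_n\xi_j$ in $E'$, a direct computation of graded commutators gives
\[
[t,\bar\lambda_n]=\id_{\bar F} \qquad\text{and}\qquad [t,\bar\lambda_j]=0\ \text{ for }\ j\neq n.
\]
Letting $s$ be the $k$-linear dual of $t$ acting on $\Hom_Q(F,k)\cong\Hom_k(\bar F,k)$, dualization of these identities yields
\[
[\del^{\CC_{E'}(F,k)},\,1\otimes s]\;=\;\sum_{i=1}^{n}\chi_i\otimes\Hom_k([t,\bar\lambda_i],k)\;=\;\chi_n\otimes\id.
\]
Hence $\chi_n\cdot\id$ on $\CC_{E'}(F,k)$ is null-homotopic, so $\chi_n$ annihilates $\Ext_{E'}^{*}(\widehat{R},k)$, and $\V_{E'}(\widehat{R},k)\subseteq V(\chi_n)$ as required.

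The main obstacle is the graded commutator calculation in the third paragraph: the sign conventions must be tracked carefully, but the whole matter reduces to the familiar duality between multiplication by $\xi_n$ and contraction along $\eta$ once one exploits the tensor factorization $F=K\otimes_Q G$ with $K$ of length one. A secondary subtlety is that we genuinely run the computation on $\CC_{E'}(F,k)$ itself rather than on $\CC_{E'}(F,k)\otimes_Q k$; the minimality of $F$ is precisely what ensures that $\del^{\Hom_Q(F,k)}$ vanishes and hence that no error terms appear in the identity above.
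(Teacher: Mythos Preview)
Your proof is correct and takes a genuinely different route from the paper's. Both arguments begin the same way, using Proposition~\ref{compare} to replace $E$ by $E'=\Kos^Q(g_1,\ldots,g_n)$ adapted to the embedded deformation, so that it suffices to show the support lies in $V(\chi_n)$. From there the approaches diverge. The paper argues pointwise via Theorem~\ref{unstable}: for any $\bm{a}=(a_1,\ldots,a_{n-1},1)$ the element $f=f_n+\sum_{i<n}a_if_i$ is $Q$-regular and acts on $R'=Q/(f_1,\ldots,f_{n-1})$ as the regular element $f_n$, so $\pd_{Q/f}\widehat{R}<\infty$; hence $\Ext_{Q_{\bm a}}^{\gg 0}(\widehat{R},k)=0$ and $\nu(\bm{a})\notin\VV_E(\widehat{R},k)$. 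You instead build a specific Koszul resolution $F=K\otimes_Q G$ reflecting the tensor factorization $E'\cong K\otimes_Q\tilde E$, and exhibit an explicit null-homotopy $1\otimes s$ on $\CC_{E'}(F,k)$ with $[\del,1\otimes s]=\chi_n\otimes\id$, coming from the contraction operator $t$ on the $K$-factor and the Cartan identity $[t,\bar\lambda_n]=\id$. The paper's argument is shorter once Theorem~\ref{unstable} is available and ties the result to the classical intermediate-hypersurface picture; your argument is more self-contained (it avoids Theorem~\ref{unstable} entirely) and in fact yields the sharper conclusion that $\chi_n$ itself, not merely some power, annihilates $\Ext_{E'}^*(\widehat{R},k)$. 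One small remark: you do not actually need the DG \emph{algebra} structure on $G$ from \ref{kosaction}; it suffices to take $G$ the minimal $Q$-free resolution of $Q''$ equipped with a DG $\tilde E$-module structure as in \ref{Koszulres}, which streamlines the setup slightly.
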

\begin{proof}
By Proposition \ref{compare}, up to a linear change of coordinates of $\mathbb{P}_k^{n-1}$, we can assume that $f_n$ is regular on $R':=Q/(f_1,\ldots, f_{n-1})$. Since $\pd_Q R'<\infty$ and $$f:=f_n+\sum_{i<n}a_if_i$$ is a $Q$-regular elements that is also $R'$-regular, it follows that $\pd_{Q/f} R'/fR'<\infty$. Furthermore, $$R'/fR'=Q/(\f)=\widehat{R}$$ and hence, $\pd_{Q/f}\widehat{R}<\infty$. Thus, by Theorem \ref{unstable}, we conclude that $$\V(R)\subseteq \VV(\chi_n).\qedhere$$
\end{proof}

As  indicated in Theorem \ref{clsv} and Proposition \ref{p:ed}, embedded deformations put a restriction on $\V(R)$. More generally, one says that $Q\to \widehat{R}$ admits an embedded deformation of codimension $c$ provided that there exists $g_1,\ldots, g_n\in \n^2$ with $g_{n-c+1},\ldots, g_n$ being a regular sequence on $Q/(g_1,\ldots, g_{n-c})$ and $$\widehat{R}\cong Q/(g_1,\ldots, g_n).$$ The author is curious as to whether $\V(R)$ can, in general, detect embedded deformations of arbitrary codimension; Theorem \ref{clsv} and Proposition \ref{p:ed}, as well as various specific examples, offer partial evidence for this. 

\begin{quest}Does the following  hold for a local ring $(R,\m,k)$?:
\begin{quote}
 If $Q\to\widehat{R}$ is a minimal  Cohen presentation of $R$, then $Q\to \widehat{R}$ admits an embedded deformation of codimension $c$ if and only if $\V(R)$ is contained in a hyperplane of codimension $c$ of $\PS_k^{n-1}$. \end{quote}
\end{quest}


\subsection{Solution to a Question of D. Jorgensen}\label{Djor}

This subsection is devoted to answering the following question of D. Jorgensen.
\begin{quest}\cite{Jor}\label{Question} Let $Q$ be a regular local ring with an algebraically closed residue field $k$. Set $R:=Q/(\f)$ where $\f\con \n^2$ minimally generates $(\f)$.
\begin{enumerate}
\item If $\Var(Q,\f;M,N)=\emptyset$ for some finitely generated $R$-modules $M$ and $N$, does  $\Ext_R^{\gg 0}(M,N)=0$?
\item If $\Var(Q,\f;M,N)=\emptyset$ for some finitely generated $R$-modules $M$ and $N$, is $R$ a complete intersection?
\end{enumerate}
\end{quest}

\begin{remark}
When $N$ is held fixed as $k$, then by Theorem \ref{thmls}(3) and \ref{relateJ} the answers to Questions \ref{Question}(1) and \ref{Question}(2) are ``yes."  However,  in general, the answers to  these questions are both  ``no" by Example \ref{counter}.\end{remark}

\begin{example}\label{counter}
Let $Q=k\lb x,y,z\rb$ where $k$ is algebraically closed. Set $\f=xy,yz$, $E:=\Kos^Q(\f)$, $\A:=k[\chi_1,\chi_2],$ and  $R:=Q/(\f)$. 

For each $p,q\in k$, define $$M_{p,q}:=R/(px+qz).$$ When $p\neq 0$, then $M_{p,q}=Q/(px+qz,yz)$. A minimal $Q$-free resolution of $M_{p,q}$ is given by 
$$F=0\to Qc\xrightarrow{\begin{pmatrix} -yz \\ px+qz \end{pmatrix} }Qb_1\oplus Qb_2\xrightarrow{ \begin{pmatrix} px + qz & yz \end{pmatrix}}  Qa\to 0$$ and since $\f M_{p,q}=0$ the DG $E$-module structure on $F$ is given by  
 $$(\lambda_1)_0=\begin{pmatrix} y/p \\ -q/p\end{pmatrix},  \ (\lambda_1)_1=\begin{pmatrix} q/p & y/p\end{pmatrix}\text{ and }(\lambda_2)_0=\begin{pmatrix} 0\\ 1\end{pmatrix}, \ (\lambda_2)_1=\begin{pmatrix} -1& 0\end{pmatrix}$$  where $\lambda_i$ denotes left multiplication by $\xi_i$ on $F$. That is,  $F$ is a Koszul resolution of $M_{p,q}$ with $\lambda_1$ and $\lambda_2$ prescribed above.

By Remark \ref{complex},   $\CC_E(F,k)$ is the following complex of graded $\A$-modules $$0\to \shift^{-4}\A\gamma \xrightarrow{\begin{pmatrix}  \frac{q}{p} \chi_1-\chi_2  \\ 0\end{pmatrix}}\shift^{-2} \A\beta_1\oplus \A\beta_2 \xrightarrow{\begin{pmatrix}0 & \chi_2-\frac{q}{p}\chi_1 \end{pmatrix}} \A\alpha\to 0.$$
Hence, $$\Ext_{E}^*(M_{p,q},k)= k[\chi_1,\chi_2]/(q\chi_1-p\chi_2)\oplus \shift^{-2}k[\chi_1,\chi_2]/(q\chi_1-p\chi_2).$$
A similar argument, holds for $q\neq 0$.

Therefore, for each point $(p,q)\in \mathbb{A}_k^2\backslash\{(0,0)\}$, we have that $$\VV_E(M_{p,q},k)=\gsupp{\A} (\A/(q\chi_1-p\chi_2)).$$  In particular, for each point $(p,q)\in \mathbb{A}_k^2$  $$\Var(Q,\f,M_{p,q},k)=\{(a,b)\in \mathbb{A}_k^2: qa=pb\}$$ is the line through $(0,0)$ and $(p,q)$ in $\mathbb{A}_k^2$.  Also,  by \ref{relateJ}, $$\Var(Q,\f;M_{p,q},k)\cap \Var(Q,\f;M_{s,t},k)=\Var(Q,\f,M_{p,q}, M_{s,t}).$$
Therefore, for two points $\bm{a}$ and $\bm{b}$ in $\mathbb{A}_k^2\backslash\{(0,0)\}$ such that  $\bm{a}\neq \lambda \bm{b}$ for any $\lambda$, we have that $$\Var(Q,\f;M_{\bm{a}}, M_{\bm{b}})=\emptyset.$$  Therefore, this answers Question \ref{Question}(2) in the negative. 
Finally, a direct calculation shows that  $\Ext_R^*(M_{1,0}, M_{0,1})$ is unbounded and hence, the answer to Question \ref{Question}(1) is also ``no," in general. 
\end{example}

\begin{remark} When $R$ is a complete intersection, every closed subset of $\PS_k^{n-1}$ is realizable as $\V_R(M)$ for some finitely generated $R$-module $M$ (see \cite{AI2,BerghCI,BW}).
If $R$ is not a complete intersection, then $\V_R(M)\neq \emptyset$ whenever $M$ is a  nonzero finitely generated $R$-module  (see Theorem \ref{thmls}(3)). In Example \ref{counter}, we demonstrate that nearly every non-empty closed subset of $\mathbb{P}_k^1$ is realizable as $\V_R(M)$ for some finitely generated $R$-module $M$. However, in general, it is not known which closed subsets can be attained as $\V_R(M)$ for some finitely generated $R$-module $M$ (or object $M$ of $\D^f(R)$). In \cite[3.3.4]{Pol}, the author shows every hyperplane of $\mathbb{P}_k^{n-1}$ is realizable as a complex whose homology is just two copies of $k$ in specified degrees. It is not known if one can use \emph{finitely generated modules} to obtain every hyperplane of $\mathbb{P}_k^{n-1}$.\end{remark}

\begin{Problem}\label{problem}
Determine what subsets of $\mathbb{P}_k^{n-1}$ are realizable as $\V_R(M)$ for a finitely generated $R$-module $M$, or, more generally, for $M$ in $\D^f(R)$. 
\end{Problem}


\bibliographystyle{abbrv}
\bibliography{refs}

\end{document}